\newcommand{\Ad}{\operatorname{Ad}}
\newcommand{\id}{\operatorname{id}} 
\newcommand{\Aut}{\operatorname{Aut}}
 \newcommand{\Ext}{\operatorname{Ext}} 
\newcommand{\diag}{\operatorname{diag}}
\newcommand{\Span}{\operatorname{Span}}
\newcommand{\Tr}{\operatorname{Tr}}
 \newcommand{\supp}{\operatorname{supp}}
\newcommand{\Det}{\operatorname{Det}}
\newcommand{\alg}{\operatorname{alg}}
\newcommand{\ev}{\operatorname{ev}}
\newcommand{\Sec}{\operatorname{Sec}}
\newcommand{\sign}{\operatorname{sign}}
\newcommand{\pt}{\operatorname{point}}
\newcommand{\cof}{\operatorname{cof}}
   \theoremstyle{plain}
   \newtheorem{thm}{Theorem}[section]
   \newtheorem{prop}[thm]{Proposition}
   \newtheorem{lemma}[thm]{Lemma}  
   \newtheorem{cor}[thm]{Corollary}
   \theoremstyle{definition}
   \newtheorem{defn}[thm]{Definition}
   \newtheorem{example}[thm]{Example}
   \theoremstyle{remark}
   \newtheorem{remark}[thm]{Remark}
   \numberwithin{equation}{section}
        \date{\today}
\title[$C^*$-algebras and affine transformations]{The $C^*$-algebra
  of an affine map on the 3-torus}
\author{Kasper K.S. Andersen and Klaus Thomsen}
\DeclareMathOperator\coker{coker}
\date{\today}
\address{Institut for Matematiske Fag, University of Copenhagen,
  DK-2100, Copenhagen, Denmark}
\email{kksa@math.ku.dk}
\address{Institut for matematik, Ny Munkegade, 8000 Aarhus C, Denmark}
\email{matkt@imf.au.dk}
\begin{document}

\maketitle

\begin{abstract} We study the $C^*$-algebra of
  an affine map on a
  compact abelian group and give necessary and sufficient conditions
  for strongly transitivity when the group is a torus. The structure
  of the $C^*$-algebra is completely determined for all strongly
  transitive affine maps on a torus of dimension one, two or three. 
 
\end{abstract}

\section{Introduction}

The purpose with the present paper is to
present a complete description and classification of the simple
$C^*$-algebras which arise from the generalised crossed product
construction of Renault, Deaconu and Anantharaman-Delaroche when it is
applied to affine maps on tori of dimension $\leq 3$. The paper is written in
the conviction that progress on our understanding of the
relationship between dynamical systems and operator algebras can
benefit both areas and that it is improved by having rich classes of
examples where the dynamical systems and the associated
$C^*$-algebras are equally tractable. The affine maps of tori constitute
a class
of dynamical systems that are well studied and whose
structures are relatively transparent when compared to other
systems. As the present paper will demonstrate our knowledge of
$C^*$-algebras is now comprehensive enough to allow a complete
identification and classification of the corresponding $C^*$-algebras,
provided the affine maps are strongly transitive and the
dimension of the torus does not exceed 3.

There are many other compact abelian groups for which it would be
desirable to have a better understanding of the $C^*$-algebras 
associated to the locally homeomorphic affine maps. For this reason we maintain a high
level of generality before we specialise to tori of low
dimension. Specifically we first describe the general
construction of Renault, Deaconu and Anantharaman-Delaroche from
\cite{Re}, \cite{De} and \cite{An}, which produces a locally compact
\'etale groupoid and hence a $C^*$-algebra out of a local
homeomorphism. We develop a bit of the structure
theory that we need when we specialise to affine maps. In particular, we show that the $KK$-equivalence
class of the $C^*$-algebra is preserved by an appropriate notion of
homotopy. It follows from this that the $C^*$-algebra of a locally
homeomorphic and surjective affine
map on a path-connected compact group is KK-equivalent, in a unit-preserving way, to the $C^*$-algebra
of its linear part. This means that as far as the calculation of the
K-groups is concerned it suffices to consider group
endomorphisms. Furthermore, it follows from the classification
theorem of Kirchberg and Phillips that the $C^*$-algebras of two strongly
transitive affine maps on the same compact metrizable path-connected group are isomorphic provided they are purely
infinite and the maps have the same linear parts. Thanks to a recent
result from \cite{CT} we know that the $C^*$-algebra of a locally
injective surjection on a compact finite dimensional metric space is purely infinite if it is simple, provided only that
the map is not injective. All in all this means that for the
$C^*$-algebras of non-injective strongly
transitive locally homeomorphic affine maps on a compact metrizable path-connected
group it is not only the K-theory, but
also the algebra itself which is completely determined by the K-theory of the $C^*$-algebra coming from the
linear part of the map. Before we apply this to tori we first show
that the $C^*$-algebra of a locally homeomorphic affine
map on a compact abelian group is the
universal $C^*$-algebra generated by a unitary representation of the
dual group and an isometry subject to two relations, cf. Theorem
\ref{universal}. This result is motivated by a recent paper by Cuntz
and Vershik (\cite{CV}) where this is done for exact endomorphisms.

Turning the attention to tori we first give necessary and sufficient conditions for an affine surjection
on a torus to be strongly transitive, cf. Theorem \ref{OK?2}. In
remains then to calculate the $K$-theory, and it is in order to obtain a complete
calculation, covering all strongly transitive locally homeomorphic
affine maps, that we restrict to tori of dimension $\leq 3$. Thus an
important part of the paper is devoted to
the calculation of the K-theory groups of $C^*$-algebras coming from
surjective group endomorphisms of tori. When all the eigenvalues of the integer matrix which
defines the endomorphism are strictly larger than one in absolute
value the endomorphism is expanding, and in this case the calculation was
performed in dimension 1 and 2 in \cite{EHR}. We use here the same
six-terms exact sequence which was obtained in \cite{EHR}, but in
order to
avoid the assumption in \cite{EHR} about the absolute value of the
eigenvalues, we take a more
elementary approach which exploits that we can play with all integer
matrices, and we complete in this way the calculation for general
integer matrices in dimension 2 and
3. The main tool is the K\"unneth theorem, and we avoid using the description of the $K$-theory of a
torus as an exterior algebra. It is not at all clear that our bare-hands
methods can be made to work in higher dimensions, but we
believe that the simplicity of the approach has some virtues in itself.

It remains then to handle the injective case which means that we must
determine the $C^*$-algebras arising from minimal affine homeomorphisms. On the circle these are
just the irrational rotation algebras and they are well understood. On
the two-torus the minimal affine homeomorphisms are all conjugate to
one of the Furstenberg-transformations whose $C^*$-algebras have been 
characterised through the work of Lin and Phillips, \cite{LP},
\cite{Ph2}. We show that both the methods and the results of Lin and
Phillips carry over with little effort to the three-dimensional
case. This part of the paper has some overlap with recent work of
Reihani, \cite{Rei}, where the K-theory of the $C^*$-algebras of
Furstenberg transformations is studied.

Finally, we summarise our results in the three Sections
\ref{n=1x}, \ref{alg-two} and \ref{alg-3}. They contain a
description of the ordered $K$-theory groups, together with the
position of the distinguished element of the $K_0$-group represented
by the unit, for all the simple $C^*$-algebras arising from strongly
transitive affine maps on tori of dimension
$\leq 3$. This characterises these
$C^*$-algebras since they are all classifiable by $K$-theory.

\section{Algebras from local homeomorphisms}

In this section we describe the construction of an \'etale groupoid
and a $C^*$-algebra from a local
homeomorphism. It was introduced in increasing generality by
J. Renault \cite{Re}, V. Deaconu \cite{De} and Anantharaman-Delaroche
\cite{An}. Although the focus in this paper is on cases where the space is
compact it will be crucial to have access to statements and results
from the locally compact case.

\subsection{The definition} 

Let $X$ be a second countable locally compact Hausdorff space and
$\varphi : X \to X$ a local homeomorphism. Set
$$
\Gamma_{\varphi} = \left\{ (x,k,y) \in X \times \mathbb Z  \times X :
  \ \exists n,m \in \mathbb N, \ k = n -m , \ \varphi^n(x) =
  \varphi^m(y)\right\} .
$$
This is a groupoid with the set of composable pairs being
$$
\Gamma_{\varphi}^{(2)} \ =  \ \left\{\left((x,k,y), (x',k',y')\right) \in \Gamma_{\varphi} \times
  \Gamma_{\varphi} : \ y = x'\right\}.
$$
The multiplication and inversion are given by 
$$
(x,k,y)(y,k',y') = (x,k+k',y') \ \text{and}  \ (x,k,y)^{-1} = (y,-k,x)
.
$$
Note that the unit space of $\Gamma_{\varphi}$ can be identified with
$X$ via the map $x \mapsto (x,0,x)$. Under this identification the
range map $r: \Gamma_{\varphi} \to X$ is the projection $r(x,k,y) = x$
and the source map the projection $s(x,k,y) = y$.

To turn $\Gamma_{\varphi}$ into a locally compact topological groupoid, fix $k \in \mathbb Z$. For each $n \in \mathbb N$ such that
$n+k \geq 0$, set
$$
{\Gamma_{\varphi}}(k,n) = \left\{ \left(x,l, y\right) \in X \times \mathbb
  Z \times X: \ l =k, \ \varphi^{k+n}(x) = \varphi^n(y) \right\} .
$$
This is a closed subset of the topological product $X \times \mathbb Z
\times X$ and hence a locally compact Hausdorff space in the relative
topology.
Since $\varphi$ is locally injective $\Gamma_{\varphi}(k,n)$ is an open subset of
$\Gamma_{\varphi}(k,n+1)$ and hence the union
$$
{\Gamma_{\varphi}}(k) = \bigcup_{n \geq -k} {\Gamma_{\varphi}}(k,n) 
$$
is a locally compact Hausdorff space in the inductive limit topology. The disjoint union
$$
\Gamma_{\varphi} = \bigcup_{k \in \mathbb Z} {\Gamma_{\varphi}}(k)
$$
is then a locally compact Hausdorff space in the topology where each
${\Gamma_{\varphi}}(k)$ is an open and closed set. In fact, as is easily verified, $\Gamma_{\varphi}$ is a locally
compact groupoid in the sense of \cite{Re} and an \'etale groupoid,
i.e. the range and source maps are local homeomorphisms.

To obtain a
$C^*$-algebra, consider the space $C_c\left(\Gamma_{\varphi}\right)$ of
continuous compactly supported functions on $\Gamma_{\varphi}$. They form
a $*$-algebra with respect to the convolution-like product
$$
f  g (x,k,y) = \sum_{z,n+ m = k} f(x,n,z)g(z,m,y)
$$
and the involution
$$
f^*(x,k,y) = \overline{f(y,-k,x)} .
$$
To obtain a $C^*$-algebra, let $x \in X$ and consider the
Hilbert space $H_x$ of square summable functions on $s^{-1}(x) = \left\{ (x',k,y')
    \in \Gamma_{\varphi} : \ y' = x \right\}$ which carries a
  representation $\pi_x$ of the $*$-algebra $C_c\left(\Gamma_{\varphi}\right)$
  defined such that
\begin{equation*}\label{pirep}
\left(\pi_x(f)\psi\right)(x',k, x) = \sum_{z, n+m = k}
f(x',n,z)\psi(z,m,x)  
\end{equation*}
when $\psi  \in H_x$. One can then define a
$C^*$-algebra $C^*_r\left(\Gamma_{\varphi}\right)$ as the completion
of $C_c\left(\Gamma_{\varphi}\right)$ with respect to the norm
$$
\left\|f\right\| = \sup_{x \in X} \left\|\pi_x(f)\right\| .
$$
Since we assume that $X$ is second countable it follows that
$C^*_r\left(\Gamma_{\varphi}\right)$ is separable. It is this $C^*$-algebra we study in the present paper when $\varphi$ is an
affine map.

Note that the $C^*$-algebra can be constructed from any locally compact \'etale groupoid in the place of
$\Gamma_{\varphi}$, see e.g. \cite{Re}, \cite{An}. Note also that $C^*_r\left(\Gamma_{\varphi}\right)$
is nothing but the classical crossed product $C_0(X) \times_{\varphi}
\mathbb Z$ when $\varphi$ is a homeomorphism.


\subsection{The structure}

By construction $C^*_r\left(\Gamma_{\varphi}\right)$ carries an action
$\beta$ by the circle group $\mathbb T$ defined such that
$$
\beta_{\lambda}(f)(x,k,y) = \lambda^k f(x,k,y)
$$
when $f \in C_c\left(\Gamma_{\varphi}\right)$. This is the \emph{gauge
  action} and it gives us an important tool for the study of the
structure of $C^*_r\left(\Gamma_{\varphi}\right)$. To describe the
fixed point algebra of the gauge action note that the canonical
conditional expectation $P : C^*_r\left(\Gamma_{\varphi}\right) \to
C^*_r\left(\Gamma_{\varphi}\right)^{\beta}$, given by
$$
P(a) = \int_{\mathbb T} \beta_{\lambda}(a) \ d\lambda,
$$
maps $C_c\left(\Gamma_{\varphi}\right)$ onto
$C_c\left(\Gamma_{\varphi}(0)\right)$. If we denote the open
subgroupoid $\Gamma_{\varphi}(0)$ by $R_{\varphi}$, it follows that
$$  
C^*_r\left(\Gamma_{\varphi}\right)^{\beta} =
C^*_r\left(R_{\varphi}\right ) .
$$
To unravel the structure of $C^*_r\left(R_{\varphi}\right )$ and $C^*_r\left(\Gamma_{\varphi}\right)$,
consider for $n \in \mathbb N$
the set
 $$
R(\varphi^n) = \left\{ (x,y) \in X \times X : \ \varphi^n(x) =
  \varphi^n(y)\right\} .
$$ 
Since $\varphi$ is a local homeomorphism $R\left(\varphi^n\right)$ is
a locally compact \'etale groupoid (an equivalence relation, in fact)
in the relative topology inherited from $X \times X$,
and we can consider its (reduced) groupoid $C^*$-algebra
$C^*_r\left(R\left(\varphi^n\right)\right)$. Now $R(\varphi^n)$ can be
identified with an open subgroupoid of $R_{\varphi} \subseteq
\Gamma_{\varphi}$ via the map $(x,y) \mapsto (x,0,y)$ and when we
suppress this identification in the notation we have that
$$
R_{\varphi} = \bigcup_n R\left(\varphi^n\right) .
$$  
It follows that the embeddings
$C_c\left(R\left(\varphi^n\right)\right) \subseteq
C_c\left(R\left(\varphi^{n+1}\right)\right) \subseteq
C_c\left(R_{\varphi}\right)$ extend to embeddings
$C^*_r\left(R\left(\varphi^n\right)\right) \subseteq
C^*_r\left(R\left(\varphi^{n+1}\right)\right) \subseteq
C_r^*\left(R_{\varphi}\right)$, cf. e.g. Proposition 1.9 in
\cite{Ph3}, and hence that
\begin{equation}\label{union}
C^*_r\left(R_{\varphi}\right ) = \overline{\bigcup_n
  C^*_r\left(R\left(\varphi^n\right)\right)} .
\end{equation}

\begin{lemma}\label{morita} $C^*_r(R(\varphi))$ is Morita equivalent to
  $C_0(\varphi(X))$.
\end{lemma}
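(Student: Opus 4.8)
The plan is to recognise $R(\varphi)$ as a bundle of elementary groupoids over $\varphi(X)$ and to realise $X$ itself as a groupoid equivalence between $R(\varphi)$ and the base. Since $\varphi$ is a local homeomorphism each fibre $\varphi^{-1}(z)$, $z \in \varphi(X)$, is discrete, and the restriction of $R(\varphi)$ to such a fibre is the full pair groupoid $\varphi^{-1}(z) \times \varphi^{-1}(z)$, whose $C^*$-algebra is the compact operators on $\ell^2(\varphi^{-1}(z))$. Thus $C^*_r(R(\varphi))$ is morally a field of elementary $C^*$-algebras over $\varphi(X)$, and one expects it to be Morita equivalent to the algebra of the base, $C_0(\varphi(X))$. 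Note that $\varphi(X)$ is open, hence locally compact, because a local homeomorphism is an open map.

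To make this precise I would invoke the equivalence theorem of Muhly--Renault--Williams with the space $X$ as the linking space. Equip $X$ with the left $R(\varphi)$-action whose moment map is the identity $X \to R(\varphi)^{(0)} = X$ and whose action is $(x,y)\cdot y = x$, and with the right action of the trivial (unit) groupoid $\varphi(X)$ given by the moment map $\varphi : X \to \varphi(X)$ (this right action being trivial since $\varphi(X)$ consists only of units). The orbit of $y$ under $R(\varphi)$ is exactly $\varphi^{-1}(\varphi(y))$, so $\varphi$ induces a homeomorphism $R(\varphi)\backslash X \cong \varphi(X)$, while the trivial right action has orbit space $X \cong R(\varphi)^{(0)}$. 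The action is free because $(x,y)\cdot y = y$ forces $x = y$, and it is proper because the map $((x,y),y) \mapsto (x,y)$ is just the inclusion $R(\varphi) \hookrightarrow X \times X$, which is a closed embedding as $R(\varphi) = \{(x,y): \varphi(x) = \varphi(y)\}$ is closed. Hence $X$ is an $(R(\varphi), \varphi(X))$-equivalence and the theorem yields a Morita equivalence between the groupoid $C^*$-algebras, the second of which is $C_0(\varphi(X))$.

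Concretely, the imprimitivity bimodule is the completion of $C_c(X)$ with right action $(\xi\cdot g)(x) = \xi(x)\,g(\varphi(x))$ for $g \in C_0(\varphi(X))$, the $C_0(\varphi(X))$-valued inner product $\langle \xi, \eta\rangle(z) = \sum_{\varphi(x) = z} \overline{\xi(x)}\,\eta(x)$, and the $C^*_r(R(\varphi))$-valued inner product ${}_{R(\varphi)}\langle \xi,\eta\rangle(x,y) = \xi(x)\,\overline{\eta(y)}$. All the sums are finite because the fibres are discrete and the supports compact, and both inner products are visibly full, so that the routine verification of the imprimitivity-bimodule axioms should identify this completion with the abstract equivalence above.

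The main obstacle I anticipate is not the algebraic identities but the topological and analytic bookkeeping: checking that the action is genuinely proper, so that the bimodule norms are well behaved, and that ${}_{R(\varphi)}\langle \xi, \eta\rangle$ indeed lies in, and generates, the reduced algebra $C^*_r(R(\varphi))$. The reduced version of the equivalence theorem applies here, but if one prefers the original full statement one must also note that $R(\varphi)$ is amenable, being a bundle of pair groupoids over $\varphi(X)$; this guarantees $C^*_r(R(\varphi)) = C^*(R(\varphi))$ and removes any discrepancy between the full and reduced pictures.
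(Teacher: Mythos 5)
Your proof is correct, and at bottom it is the same proof as the paper's: the linking space $X$ you use is homeomorphic, via $x \mapsto (\varphi(x),x)$, to the graph $G_{\varphi}$ that the paper takes as its bimodule, and after this identification your two inner products and the two module actions are exactly the formulas in the paper (with the roles of left and right swapped, which is immaterial). The difference is purely in how the imprimitivity is certified. The paper verifies directly that $C_c(G_{\varphi})$ is a pre-imprimitivity bimodule in the sense of Raeburn--Williams and invokes their Proposition 3.12 to complete it; this is entirely algebraic and needs no discussion of group(oid) actions, properness, or amenability. You instead package the same data as a Muhly--Renault--Williams equivalence of groupoids, which requires the (correct, and correctly checked) observations that the $R(\varphi)$-action on $X$ is free and proper and that $\varphi$ identifies the orbit space with $\varphi(X)$; the payoff is that fullness of the inner products and the imprimitivity axioms come for free from the general theorem. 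Your closing caveat is also well taken: the original MRW theorem concerns full groupoid $C^*$-algebras, so to land on $C^*_r(R(\varphi))$ one either uses a reduced version of the equivalence theorem or notes that $R(\varphi)$, being a proper \'etale equivalence relation (a bundle of pair groupoids over $\varphi(X)$), is amenable, so the full and reduced algebras coincide. The paper's route sidesteps this issue entirely, since the Raeburn--Williams completion is performed inside $C^*_r(R(\varphi))$ from the start; that is the one place where your argument genuinely needs an extra sentence that the paper's does not.
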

\begin{proof}
Let
$$
G_{\varphi} = \left\{(y,x) \in X \times X : \ y = \varphi(x) \right\}
$$
be the graph of $f$. When $h \in C_c(G_{\varphi})$ and $f \in
C_c\left(R(\varphi)\right)$ define $hf : G_{\varphi} \to \mathbb C$ such that
$$
hf(y,x) = \sum_{z \in \varphi^{-1}(y)} h(y,z)f(z,x) .
$$
Then $hf \in C_c(G_{\varphi})$ and we have turned $C_c(G_{\varphi})$ into a right
$C_c(R(\varphi))$-module. Similarly, when $g \in C_c(\varphi(X))$ be define $gh \in
C_c(G_{\varphi})$ such that
$$
gh(y,x) = g(y)h(y,x),
$$
so that $C_c(G_{\varphi})$ is also a left $C_c(\varphi(X))$-module. Define a $C_c\left(R(\varphi)\right)$-valued 'inner products' on $C_c(G_{\varphi})$ such that
$$
\left<h,k\right>(x,y) = \overline{h(\varphi(x),x)}k(\varphi(y),y)
$$
and a $C_c(\varphi(X))$-valued 'inner product' such that
$$
\left( h,k\right)(y) = \sum_{z \in \varphi^{-1}(y)} h(y,z) \overline{k(y,z)} .
$$
In this way $C_c(G_{\varphi})$ becomes a
$C_c(\varphi(X))$-$C_c(R(\varphi))$-pre-imprimitivity bimodule as defined by Raeburn
and Williams in Definition 3.9 of \cite{RW} and then Proposition 3.12
of \cite{RW} shows that the completion of this bimodule is the
required $C_0(\varphi(X))$-$C^*_r\left(R(\varphi)\right)$-imprimitivity bimodule.
\end{proof}

Let $\mathbb K$ denote the $C^*$-algebra of compact operators on a
separable, infinite dimensional Hilbert space.

By applying Lemma \ref{morita} to $\varphi^n$ and combining with a well-known result of Brown, Green
and Rieffel, \cite{BGR}, we conclude that
$$
C^*_r\left(R\left(\varphi^n\right)\right) \otimes \mathbb K \simeq
C_0\left(\varphi^n(X)\right) \otimes \mathbb K.
$$
In particular, it follows from (\ref{union}) that $C^*_r(R_{\varphi})$ is an inductive limit of
$C^*$-algebras stably isomorphic to abelian $C^*$-algebras. When $X$ is compact and $\varphi$ is
  surjective it follows that $C^*_r\left(R_{\varphi}\right)$ is the
  inductive limit of a unital sequence of homogeneous $C^*$-algebras
  with spectrum $X$.

The next step will be to show that the gauge action is full.

\begin{lemma}\label{fullspectral} Elements of the form $f  g^*$,
  where $f,g \in C_c\left(\Gamma_{\varphi}(1)\right)$, span a dense
  subspace in $C^*_r\left(R_{\varphi}\right) =
  C^*_r\left(\Gamma_{\varphi}\right)^{\beta}$, and the same is true
  for the elements of the form $h k^*$ where $h,k \in
  C_c\left(\Gamma_{\varphi}(-1)\right)$.
\end{lemma}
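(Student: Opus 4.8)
The plan is to show that the linear span of products $fg^{*}$ with $f,g\in C_c(\Gamma_{\varphi}(1))$ is dense in $C^*_r(R_{\varphi})=C^*_r(\Gamma_{\varphi}(0))$, and then argue symmetrically for the $\Gamma_{\varphi}(-1)$ case. Recall from \eqref{union} that $C^*_r(R_{\varphi})=\overline{\bigcup_n C^*_r(R(\varphi^n))}$, so it suffices to approximate, in the inductive-limit sense, an arbitrary element of $C_c(R(\varphi^n))$ for each $n$. Since the span of such products is clearly contained in $C^*_r(\Gamma_{\varphi}(0))$ (a product $fg^{*}$ of a degree-$1$ and a degree-$(-1)$ element lands in degree $0$), the content is entirely in the reverse inclusion, i.e.\ density.

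The key computation is to write down $fg^{*}$ explicitly. For $f,g\in C_c(\Gamma_{\varphi}(1))$ one has, using the convolution product and the involution,
\eq{fg^{*}(x,0,y)=\sum_{z:\ \varphi(z)=w}f(x,1,z)\,\overline{g(y,1,z)},}
where the sum runs over the common preimages $z$ that make both $f$ and $g$ nonzero; concretely $f(x,1,z)$ is supported where $\varphi^{m+1}(x)=\varphi^{m}(z)$. The idea is then to choose $f$ and $g$ to be supported on small bisections: pick a point $(x_0,y_0)\in R(\varphi^n)$, so $\varphi^n(x_0)=\varphi^n(y_0)$, and a local inverse branch $\sigma$ of $\varphi$ near a common image point. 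Using that $\varphi$ is a \emph{local} homeomorphism one can find a neighbourhood $U$ of the relevant preimage point $z_0$ on which $\varphi$ is injective, together with continuous bump functions that realise a prescribed value of a given $a\in C_c(R(\varphi^n))$ near $(x_0,y_0)$. A partition-of-unity argument over a finite cover of the compact support of $a$ then patches these local products together to approximate $a$ uniformly, and because everything stays inside $C^*_r(R(\varphi^n))$ the uniform approximation upgrades to a norm approximation in $C^*_r(R_{\varphi})$.

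\textbf{The main obstacle} I anticipate is bookkeeping with the groupoid indices so that the products $fg^{*}$ actually land in degree $0$ and hit the right element of $R(\varphi^n)$ rather than some shifted relation. One has to track that composing a $\Gamma_{\varphi}(1)$-element with the inverse of another $\Gamma_{\varphi}(1)$-element folds the extra $\varphi$ on the range side against the extra $\varphi$ on the source side, producing an element of $R(\varphi^{n})$ for the appropriate $n$; matching the level $n$ of the target $a$ with the level at which $f,g$ are supported is where one must be careful. Once the indices are aligned, the local-injectivity of $\varphi$ guarantees that each convolution sum above reduces to a single term over a small enough bisection, so the computation becomes a genuine rank-one type product and the bump-function construction goes through.

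\textbf{For the second assertion}, concerning $h,k\in C_c(\Gamma_{\varphi}(-1))$, I would simply observe that $hk^{*}$ again lands in degree $0$ and that the roles of range and source are interchanged relative to the first case; applying the involution or, equivalently, running the identical bisection argument with $\varphi$ acting on the opposite leg yields the same density statement. No new idea is required beyond the symmetry $(x,k,y)\mapsto(y,-k,x)$.
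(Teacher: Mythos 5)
Your proposal follows essentially the same route as the paper: reduce via (\ref{union}) to elements of $C_c\left(R\left(\varphi^n\right)\right)$, use a partition of unity to localize to pieces supported in $R\left(\varphi^n\right)\cap (U\times V)$ with a sufficiently high iterate of $\varphi$ injective on $U$ and $V$, and factor each piece as a product of a degree-$1$ element with the adjoint of another. The only real difference is that the paper takes $f$ to be a cut-off of the constant function on the graph of $\varphi$ over the range of the bisection and lets $g$ carry the values of $F$ itself, so each partition-of-unity piece is factored \emph{exactly} as $F = f\left(g^*\right)^*$, which makes your final uniform-to-norm approximation step unnecessary.
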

\begin{proof} For each $n \in \mathbb N$ set
$$
R\left(\varphi^n\right) = \left\{ (x,0,y) \in R_{\varphi} : \
  \varphi^n(x) = \varphi^n(y) \right\} .
$$
Let $F \in C_c\left(R\left(\varphi^n\right)\right), \ n
  \geq 2$. Using a
    partition of unity we can write $F$ as a sum of functions in
    $C_c\left(R\left(\varphi^n\right)\right)$ each of
    which is supported in a subset of $R\left(\varphi^n\right)$ of the
    form $R\left(\varphi^n\right) \cap (U \times V)$ where $U,V$ are
    open subsets of $X$ where $\varphi^{n+1}$ is injective. We assume
    therefore that $F$ is supported in $R\left(\varphi^n\right) \cap
    (U \times V)$. Set $U_0 = r\left( R\left(\varphi^n\right) \cap (U
      \times V)\right)$ and $V_0 = s\left( R\left(\varphi^n\right) \cap (U
      \times V)\right)$, both open subsets of $X$. Set $K =
    r\left(\supp F\right)$, a compact subset of $U_0$. Let $h
    \in C_c(X)$ be such that $\supp h \subseteq U_0$ and $h(x) = 1$
    for all $x \in K$. Set
$$
A = \Gamma_{\varphi}(1,n) \cap \left( U_0 \times \{1\} \times
  \varphi(U_0)\right)
$$
and
$$
B = \Gamma_{\varphi}(-1,n) \cap \left( \varphi(U_0) \times \{-1\} \times
  V_0\right)
$$
which are open in $\Gamma_{\varphi}(1)$ and $\Gamma_{\varphi}(-1)$, respectively.
For every $(x,1,y) \in A$, set $f(x,1,y) = h(x)$ and note that $f$ has
compact support in $A$. When $(x,-1,y) \in B$ there is a unique
element $x' \in U_0$ such that $\varphi(x') = x$ and $(x',y) \in
R\left(\varphi^n\right)$. We can therefore define $g : B \to \mathbb R$ such that
$g(x,-1,y) = F(x',y)$. Extending
$f$ and $g$ to be zero outside $A$ and $B$, respectively, we can
consider them as elements of $C_c(\Gamma_{\varphi})$. Then $f, g^* \in
C_c\left(\Gamma_{\varphi}(1) \right)$. Since $f  g
= f  \left(g^*\right)^* = F$ this completes the proof of the first assertion because
$\bigcup_n C_c\left(R\left( \varphi^n\right)\right)$ is dense in
$C^*_r\left(R_{\varphi}\right)$. The second assertion is
proved in the same way.
\end{proof}

\begin{thm}\label{kishitakai} There is an automorphism $\alpha$ on
  $C^*_r\left(R_{\varphi}\right) \otimes \mathbb K$ such that
  $C^*_r\left(\Gamma_{\varphi}\right) \otimes \mathbb K$ is
  $*$-isomorphic to the crossed product $\left(C^*_r\left(R_{\varphi}\right)
  \otimes \mathbb K\right) \rtimes_{\alpha} \mathbb Z$.
\begin{proof} It follows from Lemma \ref{fullspectral} that Theorem 2
  of \cite{KT} applies to give an isomorphism
\begin{equation}\label{tensoreq}
\left(C^*_r\left(\Gamma_{\varphi}\right) \rtimes_{\beta} \mathbb
  T\right) \otimes \mathbb K \ \simeq \ C^*_r\left(R_{\varphi}\right)
\otimes \mathbb K.
\end{equation}
Let $\alpha_0$ be the automorphism of $C^*_r\left(\Gamma_{\varphi}\right) \rtimes_{\beta} \mathbb
  T$ generating the action dual to $\beta$. Then $\left(C^*_r\left(\Gamma_{\varphi}\right) \rtimes_{\beta} \mathbb
  T\right) \rtimes_{\alpha_0} \mathbb Z  \simeq
C^*_r\left(\Gamma_{\varphi}\right) \otimes \mathbb K$ by Takai
duality, cf. e.g. Theorem 7.9.3 of \cite{Pe}. Thus when we let
$\alpha$ be the automorphism of $ C^*_r\left(R_{\varphi}\right)
\otimes \mathbb K$ corresponding to $\alpha_0 \otimes \id_{\mathbb K}$
under the isomorphism (\ref{tensoreq}) we deduce that $\left(C^*_r\left(R_{\varphi}\right)
  \otimes \mathbb K\right) \rtimes_{\alpha} \mathbb Z \simeq C^*_r\left(\Gamma_{\varphi}\right) \otimes \mathbb K$.
\end{proof}
\end{thm}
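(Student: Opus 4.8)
The plan is to exploit the gauge action $\beta$ of $\mathbb{T}$ on $C^*_r\left(\Gamma_{\varphi}\right)$, whose fixed point algebra is $C^*_r\left(R_{\varphi}\right)$, and to recover $C^*_r\left(\Gamma_{\varphi}\right)$ up to stabilisation from this circle action via Takai duality. The decisive input is that $\beta$ has full spectral subspaces, which is precisely the content of \reflemma{fullspectral}: the products $f g^*$ with $f,g \in C_c\left(\Gamma_{\varphi}(1)\right)$, and likewise those coming from $C_c\left(\Gamma_{\varphi}(-1)\right)$, are dense in the fixed point algebra. First I would use this fullness, together with the theory of saturated circle actions, to produce a stable isomorphism
$$
\left(C^*_r\left(\Gamma_{\varphi}\right) \rtimes_{\beta} \mathbb{T}\right) \otimes \mathbb{K} \ \simeq \ C^*_r\left(R_{\varphi}\right) \otimes \mathbb{K} ;
$$
equivalently, the crossed product of $C^*_r\left(\Gamma_{\varphi}\right)$ by the circle is Morita equivalent to $C^*_r\left(R_{\varphi}\right)$. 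This is the step in which the concrete structure of $\Gamma_{\varphi}$ is used, and I would quote it from the saturated-action literature, e.g. Theorem 2 of \cite{KT}.

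Next I would apply Takai duality for the compact group $\mathbb{T}$, whose Pontryagin dual is $\mathbb{Z}$. Denoting by $\alpha_0$ the $\mathbb{Z}$-action on $C^*_r\left(\Gamma_{\varphi}\right) \rtimes_{\beta} \mathbb{T}$ dual to $\beta$, duality supplies a natural isomorphism
$$
\left(C^*_r\left(\Gamma_{\varphi}\right) \rtimes_{\beta} \mathbb{T}\right) \rtimes_{\alpha_0} \mathbb{Z} \ \simeq \ C^*_r\left(\Gamma_{\varphi}\right) \otimes \mathbb{K} ,
$$
using $\mathbb{K}\left(L^2(\mathbb{T})\right) \simeq \mathbb{K}$; the relevant statement is Theorem 7.9.3 of \cite{Pe}.

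To define the automorphism $\alpha$ and finish, I would transport the dual action across the first display. Stabilising $\alpha_0$ to $\alpha_0 \otimes \id_{\mathbb{K}}$ on $\left(C^*_r\left(\Gamma_{\varphi}\right) \rtimes_{\beta} \mathbb{T}\right) \otimes \mathbb{K}$ and carrying it through the Morita isomorphism yields an automorphism $\alpha$ of $C^*_r\left(R_{\varphi}\right) \otimes \mathbb{K}$, and then I would chain
$$
\left(C^*_r\left(R_{\varphi}\right) \otimes \mathbb{K}\right) \rtimes_{\alpha} \mathbb{Z} \ \simeq \ \left(\left(C^*_r\left(\Gamma_{\varphi}\right) \rtimes_{\beta} \mathbb{T}\right) \rtimes_{\alpha_0} \mathbb{Z}\right) \otimes \mathbb{K} \ \simeq \ C^*_r\left(\Gamma_{\varphi}\right) \otimes \mathbb{K} \otimes \mathbb{K} \ \simeq \ C^*_r\left(\Gamma_{\varphi}\right) \otimes \mathbb{K} ,
$$
where the first isomorphism uses that forming a crossed product commutes with tensoring by the trivially-acted $\mathbb{K}$, the second is Takai duality, and the third is $\mathbb{K} \otimes \mathbb{K} \simeq \mathbb{K}$.

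The hard part is the first display: Takai duality and the stabilisation bookkeeping are formal, but the Morita equivalence between $C^*_r\left(\Gamma_{\varphi}\right) \rtimes_{\beta} \mathbb{T}$ and $C^*_r\left(R_{\varphi}\right)$ is where the groupoid geometry enters, and it rests entirely on the density established in \reflemma{fullspectral}. The one subtlety I would be careful about is that the Morita isomorphism genuinely intertwines $\alpha_0 \otimes \id_{\mathbb{K}}$ with an honest automorphism of $C^*_r\left(R_{\varphi}\right) \otimes \mathbb{K}$, so that $\alpha$ is well defined and the concluding crossed-product isomorphism is canonical rather than merely an abstract identification.
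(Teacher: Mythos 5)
Your argument is exactly the paper's: Lemma \ref{fullspectral} gives fullness of the spectral subspaces so that Theorem 2 of \cite{KT} yields the stable isomorphism (\ref{tensoreq}), Takai duality (Theorem 7.9.3 of \cite{Pe}) identifies the double crossed product with $C^*_r\left(\Gamma_{\varphi}\right) \otimes \mathbb K$, and the dual automorphism is transported across (\ref{tensoreq}) to define $\alpha$. The extra bookkeeping you spell out at the end (crossed products commuting with $\otimes\,\mathbb K$ and $\mathbb K \otimes \mathbb K \simeq \mathbb K$) is implicit in the paper's one-line conclusion and is correct.
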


When $\varphi$ is proper and surjective, we can realise
$C^*_r\left(\Gamma_{\varphi}\right)$ as a crossed product by an
endomorphism via the procedure described in \cite{De} and \cite{An}, and this can be
used to give an alternative proof of Theorem
\ref{kishitakai}. Without properness such an approach seems impossible.

\begin{cor}\label{RScat} 
\begin{enumerate} 
\item[a)] $C^*_r\left(\Gamma_{\varphi}\right)$ is a
  separable nuclear $C^*$-algebra in the boot-strap category of
  Rosenberg and Schochet, \cite{RS}.
\item[b)] Assume that $\varphi$ is surjective and that $C_0(X)$ is
$KK$-contractible. It follows that
$C^*_r\left(\Gamma_{\varphi}\right)$ is $KK$-contractible.
\end{enumerate}
\end{cor}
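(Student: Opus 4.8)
The plan is to treat both parts by transporting structural properties of the abelian building blocks $C_0(\varphi^n(X))$ through the three operations assembled in the preceding results: Morita equivalence (Lemma \ref{morita}), the inductive limit (\ref{union}), and the crossed product by $\mathbb Z$ (Theorem \ref{kishitakai}). For part a) I would use that the bootstrap category $\mathcal N$ of Rosenberg and Schochet is closed under each of these operations, and that nuclearity is preserved by all of them as well.

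Concretely, for a) I would first note that $C_0(\varphi^n(X))$ is separable and commutative, hence nuclear and in $\mathcal N$. By Lemma \ref{morita} applied to $\varphi^n$ and the theorem of Brown, Green and Rieffel \cite{BGR}, $C^*_r(R(\varphi^n))$ is stably isomorphic to $C_0(\varphi^n(X))$; since both $\mathcal N$ and nuclearity are invariant under stable isomorphism, $C^*_r(R(\varphi^n))$ is nuclear and lies in $\mathcal N$. The identity (\ref{union}) exhibits $C^*_r(R_{\varphi})$ as a countable inductive limit of such algebras, and $\mathcal N$ as well as nuclearity are closed under countable inductive limits, so $C^*_r(R_{\varphi}) \in \mathcal N$. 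Finally Theorem \ref{kishitakai} writes $C^*_r(\Gamma_{\varphi}) \otimes \mathbb K$ as a crossed product of $C^*_r(R_{\varphi}) \otimes \mathbb K$ by $\mathbb Z$; since $\mathcal N$ is closed under crossed products by $\mathbb Z$ and under stabilization, and $\mathbb Z$-crossed products of nuclear algebras are again nuclear, we conclude that $C^*_r(\Gamma_{\varphi})$ is nuclear and lies in $\mathcal N$. Separability was already recorded in the construction.

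For b) the key observation is that surjectivity of $\varphi$ forces $\varphi^n(X) = X$ for every $n$, so each building block is $C_0(X)$ itself, which is $KK$-contractible by hypothesis and therefore has vanishing $K$-theory. I would then follow the same chain, checking at each stage that $K$-triviality survives. Stable isomorphism preserves it, so each $C^*_r(R(\varphi^n))$ has vanishing $K$-groups. By continuity of $K$-theory, $K_*(C^*_r(R_{\varphi})) = \varinjlim_n K_*(C^*_r(R(\varphi^n))) = 0$; combined with the bootstrap membership from a) and the universal coefficient theorem this shows $C^*_r(R_{\varphi})$ is $KK$-contractible. For the last step I would feed $C^*_r(R_{\varphi}) \otimes \mathbb K$ into the Pimsner--Voiculescu six-term sequence attached to the automorphism $\alpha$ of Theorem \ref{kishitakai}: vanishing $K$-theory of the coefficient algebra forces vanishing $K$-theory of the crossed product, so $K_*(C^*_r(\Gamma_{\varphi})) = 0$, and the universal coefficient theorem upgrades this to $KK$-contractibility.

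The main obstacle I anticipate is not any single deep step but making the inductive-limit passage rigorous for $KK$-contractibility: unlike membership in $\mathcal N$ or nuclearity, $KK$-triviality is not directly an inductive-limit-closed property at the level of $KK$-theory, so one must route the argument through $K$-theory, where continuity applies, and only reconvert to a $KK$-statement via the universal coefficient theorem, using that part a) has already placed all the algebras in $\mathcal N$. The crossed-product step is then a clean application of Pimsner--Voiculescu once the coefficient algebra is known to be $K$-trivial.
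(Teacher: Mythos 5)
Your proposal is correct and follows essentially the same route as the paper's (very terse) proof: part a) by transporting nuclearity and bootstrap membership through the Morita equivalence, the inductive limit, and the crossed product of Theorem \ref{kishitakai}, and part b) by combining Theorem \ref{kishitakai} with the Pimsner--Voiculescu sequence to kill the $K$-groups and then invoking the UCT. Your explicit remark that $KK$-contractibility must be routed through $K$-theory and the UCT at the inductive-limit stage is a correct and useful elaboration of what the paper leaves implicit.
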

\begin{proof} a) is an immediate consequence of the preceding and b)
  follows from a) since Theorem \ref{kishitakai} and the
  Pimsner-Voiculescu
  exact sequence, \cite{PV}, implies that the $K$-groups of
  $C^*_r\left(\Gamma_{\varphi}\right)$ are both zero when $C_0(X)$ is $KK$-contractible.
\end{proof}

\subsection{The Deaconu-Muhly six terms exact sequence}\label{deamuh}

Generalising a result of Deaconu and Muhly from \cite{DM} it was shown
in \cite{Th2} that there is a six terms exact sequence which can be
used to calculate the K-theory groups of
$C^*_r\left(\Gamma_{\varphi}\right)$. To describe it observe that
$$
\Gamma_{\varphi}(1,0) = \left\{ (x,1,y) \in \Gamma_{\varphi} : \ y =
  \varphi(x) \right\} 
$$ 
is closed and open in $\Gamma_{\varphi}$. As in \cite{Th2} we denote by $E$ the closure of
  $C_c\left(\Gamma_{\varphi}(1,0)\right)$ in
  $C^*_r\left(\Gamma_{\varphi}\right)$. Then $E$ is a Hilbert $C_0(X)$-bimodule with 'inner product' $(f,g) =
f^*g$ and the natural bi-module structure arising from the observation
that $C_0(X)EC_0(X) \subseteq E$. Thus $E$ is a $C^*$-correspondence over
$C_0(X)$ in the sense of \cite{Ka} and the inclusions $C_0(X)
\subseteq C^*_r\left(\Gamma_{\varphi}\right)$ and $E \subseteq
C^*_r\left(\Gamma_{\varphi}\right)$ give rise to a $*$-homomorphism
$\rho : \mathcal O_E \to C^*_r\left(\Gamma_{\varphi}\right)$, where
$\mathcal O_E$ is the $C^*$-algebra introduced and studied by Katsura
in \cite{Ka}, cf. Definition 3.5 in \cite{Ka}.  It was shown in Proposition 3.2 of \cite{Th2} $\rho$ is a
$*$-isomorphism, leading to the following theorem, cf. Theorem 8.6 in \cite{Ka}.

\begin{thm}\label{6terms} Let $[E] \in KK\left(C_0(X),C_0(X)\right)$
  be the element represented by the embedding $C_0(X) \subseteq \mathbb
  K(E)$ and let $\iota : C_0(X) \to
  C^*_r\left(\Gamma_{\varphi}\right)$ be the canonical embedding. There is an exact sequence
\begin{equation*}
\begin{xymatrix}{
K_0\left(C_0(X)\right) \ar[r]^-{\id - [E]_*} &  K_0\left(C_0(X)\right)
\ar[r]^-{\iota_*} & K_0\left(C^*_r\left(\Gamma_{\varphi}\right)\right) \ar[d]
  \\
 K_1\left(C^*_r\left(\Gamma_{\varphi}\right)\right) \ar[u] &
   K_1\left(C_0(X)\right) \ar[l]^{\iota_*} & K_1\left(C_0(X)\right) \ar[l]^-{\id - [E]_*}}
\end{xymatrix}
\end{equation*}
\end{thm}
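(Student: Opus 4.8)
The plan is to transport Katsura's six-term exact sequence for the Cuntz--Pimsner algebra $\mathcal{O}_E$ across the isomorphism $\rho : \mathcal{O}_E \to C^*_r\left(\Gamma_{\varphi}\right)$ established in Proposition 3.2 of \cite{Th2}. Since that isomorphism is already in hand, the content of the theorem reduces to two tasks: checking that the correspondence $E$ over $C_0(X)$ satisfies the hypotheses under which Katsura's sequence takes the stated simplified shape, and then identifying the maps in that sequence with $\id - [E]_*$ and $\iota_*$.

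First I would verify that $E$ is a \emph{regular} correspondence, i.e. that in Katsura's notation the ideal $J_E = \phi^{-1}\left(\mathbb{K}(E)\right) \cap \left(\ker\phi\right)^{\perp}$ equals all of $C_0(X)$, where $\phi : C_0(X) \to \mathbb{L}(E)$ is the left action. The left action is multiplication of $f \in C_c\left(\Gamma_{\varphi}(1,0)\right)$ by $a \in C_0(X)$ in the source coordinate, and this is plainly injective, so $\left(\ker\phi\right)^{\perp} = C_0(X)$. The essential point is that $\phi\left(C_0(X)\right) \subseteq \mathbb{K}(E)$: because $\varphi$ is a local homeomorphism each fibre $\varphi^{-1}(y)$ is discrete, so an element of $C_0(X)$ with sufficiently small support acts as a finite-rank operator on $E$, and an approximation argument yields $\phi(a) \in \mathbb{K}(E)$ for every $a$. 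This identifies $J_E = C_0(X)$ and simultaneously shows that $\left(E,\phi,0\right)$ is a genuine Kasparov $\left(C_0(X),C_0(X)\right)$-module, whose class is exactly the element $[E]$ represented by the embedding $C_0(X) \subseteq \mathbb{K}(E)$ in the statement; note that no fullness of $E$, and hence no surjectivity of $\varphi$, is required at this point.

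With $J_E = C_0(X)$ in place I would invoke Theorem 8.6 of \cite{Ka}. In its general form the horizontal map $K_*\left(J_E\right) \to K_*\left(C_0(X)\right)$ is $\left[\iota_{J_E}\right]_* - [E]_*$, where $\iota_{J_E}$ is the inclusion of the ideal and $[E]_*$ is the map on $K$-theory induced by the Kasparov product with $[E]$. In the regular case $\iota_{J_E}$ is the identity, so this map becomes $\id - [E]_*$; the remaining horizontal map $K_*\left(C_0(X)\right) \to K_*\left(\mathcal{O}_E\right)$ is induced by Katsura's canonical embedding $\pi : C_0(X) \hookrightarrow \mathcal{O}_E$. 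Since $\rho$ was built precisely from the inclusions $C_0(X) \subseteq C^*_r\left(\Gamma_{\varphi}\right)$ and $E \subseteq C^*_r\left(\Gamma_{\varphi}\right)$, one has $\rho \circ \pi = \iota$, so applying the isomorphism $\rho_*$ throughout replaces $K_*\left(\mathcal{O}_E\right)$ by $K_*\left(C^*_r\left(\Gamma_{\varphi}\right)\right)$ and turns the map induced by $\pi$ into the one induced by $\iota$, giving exactly the stated six-term sequence.

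The step I expect to be the main obstacle is the compactness claim $\phi\left(C_0(X)\right) \subseteq \mathbb{K}(E)$, on which the regularity of $E$ — and hence the simplified form of the sequence — rests; this is where the local homeomorphism hypothesis is genuinely used. Once it is secured the remainder is bookkeeping with Katsura's general theorem and the fixed isomorphism $\rho$. The only other point requiring care is confirming that $\rho$ intertwines the two canonical embeddings of $C_0(X)$, so that the horizontal map is correctly identified with $\iota_*$ rather than merely with some abstract inclusion; this follows directly from the way $\rho$ is constructed in \cite{Th2}.
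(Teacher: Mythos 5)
Your proposal is exactly the paper's route: the paper proves this theorem by citing the isomorphism $\rho : \mathcal O_E \to C^*_r\left(\Gamma_{\varphi}\right)$ from Proposition 3.2 of \cite{Th2} and transporting Theorem 8.6 of \cite{Ka} across it, and your added verifications (regularity of $E$ via discreteness of the fibres of $\varphi$, and $\rho \circ \pi = \iota$) are the correct supporting details. One small slip: the left action of $C_0(X)$ on $E$ is pointwise multiplication in the \emph{range} coordinate $x$ of $(x,1,\varphi(x))$, not the source coordinate — which is precisely why it is injective without any surjectivity assumption on $\varphi$, consistent with your later remark.
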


\subsection{Homotopy of local homeomorphisms}\label{homotopy}

Let $Y$ be a compact metric space. A path $\sigma_t : Y \to
Y, t \in [0,1]$, of surjective local homeomorphisms is called \emph{a homotopy of
  local homeomorphisms} when the map
$\Sigma : [0,1] \times Y \to [0,1] \times Y$ defined by
\begin{equation}\label{KK2}
\Sigma (t,y) = \left(t, \sigma_t(y)\right) 
\end{equation}
is a local
homeomorphism. We say then that $\{\sigma_t\}$ is a homotopy of local
homeomorphism connecting $\sigma_0$ and $\sigma_1$, and that $\sigma_0$
and $\sigma_1$ are \emph{homotopic as local homeomorphisms}.

\begin{lemma}\label{KKlem} Let $\sigma_0 : Y \to Y$ and $\sigma_1: Y \to Y$
  be surjective local homeomorphisms. Assume that $\sigma_0$ and $\sigma_1$ are homotopic
  as local homeomorphisms. It follows that there is a KK-equivalence
  $\lambda \in
  KK\left(C^*_r\left(\Gamma_{\sigma_0}\right),C^*_r\left(\Gamma_{\sigma_1}\right)\right)$ such that the induced isomorphism $\lambda_* : K_0\left( C^*_r\left(\Gamma_{\sigma_0}\right)\right) \to K_0\left(C^*_r\left(\Gamma_{\sigma_1}\right)\right)$ takes the element represented by the unit in $C^*_r\left(\Gamma_{\sigma_0}\right)$ to the one represented by the unit in $C^*_r\left(\Gamma_{\sigma_1}\right)$.
\begin{proof} Consider a homotopy $\left\{\sigma_t\right\}$ of local
  homeomorphisms connecting $\sigma_0$ to $\sigma_1$. Define $\Sigma : [0,1] \times Y \to [0,1] \times Y$ by
  (\ref{KK2}) and observe that $\{0\} \times Y$ and $\{1\} \times Y$
are both closed totally $\Sigma$-invariant subsets of $[0,1] \times
Y$. By Proposition 4.6 of \cite{CT} we have therefore surjective
$*$-homomorphisms $\pi_i : C^*_r\left(\Gamma_{\Sigma}\right) \to
C^*_r\left(\Gamma_{\sigma_i}\right)$ such that
$$ 
\ker \pi_i \simeq C^*_r\left(\Gamma_{\Sigma|_{Z_i}}\right),
$$
where 
$$
Z_i = \left([0,1] \backslash \left\{i\right\}\right) \times
      Y ,
$$
$i = 0,1$. Since $C_0(Z_i)$ is a 
contractible $C^*$-algebra it follows from Corollary \ref{RScat} that
$C^*_r\left(\Gamma_{\Sigma|_{Z_i}}\right)$ is $KK$-contractible. 

Let $\bullet$ denote the Kasparov
product. Since we deal with separable nuclear $C^*$-algebras it
follows from Theorem 19.5.7 of \cite{Bl} that
\begin{equation}\label{eq3} 
\begin{xymatrix}{
KK\left(C^*_r\left(\Gamma_{\sigma_i}\right) ,
  C^*_r\left(\Gamma_{\Sigma}\right)\right) \ar[rr]^-{ x \mapsto
  \left[\pi_i\right] \bullet x} & & KK\left(C^*_r\left(\Gamma_{\sigma_i}\right) ,
  C^*_r\left(\Gamma_{\sigma_i}\right)\right)
}\end{xymatrix}
\end{equation}
and
\begin{equation}\label{eq4} 
\begin{xymatrix}{
KK\left(C^*_r\left(\Gamma_{\Sigma}\right) ,
  C^*_r\left(\Gamma_{\Sigma}\right)\right) \ar[rr]^-{ x \mapsto
 \left[\pi_i\right] \bullet x} & & KK\left(C^*_r\left(\Gamma_{\Sigma}\right) ,
  C^*_r\left(\Gamma_{\sigma_i}\right)\right)
}\end{xymatrix}
\end{equation}
are both isomorphisms because $\ker \pi_i$ is KK-contractible. It
follows from the surjectivity of (\ref{eq3}) that there is an element
$\left[\pi_i\right]^{-1} \in KK\left(C^*_r\left(\Gamma_{\sigma_i}\right) ,
  C^*_r\left(\Gamma_{\Sigma}\right)\right)$ such that
$\left[\pi_i\right] \bullet \left[\pi_i\right]^{-1} =
\left[\id_{C^*_r\left(\Gamma_{\sigma_i}\right)}\right]$. Then
$$
\left[\pi_i\right] \bullet \left(\left[\pi_i\right]^{-1} \bullet
  \left[\pi_i\right]\right) = \left(\left[\pi_i\right] \bullet \left[\pi_i\right]^{-1}\right) \bullet
  \left[\pi_i\right] = \left[\pi_i\right]
$$
by associativity of the Kasparov product so the injectivity of
(\ref{eq4}) implies that $\left[\pi_i\right]^{-1} \bullet
  \left[\pi_i\right] =
  \left[\id_{C^*_r\left(\Gamma_{\Sigma}\right)}\right]$, i.e. $\left[\pi_i\right]^{-1}$  is a KK-inverse
of $\left[\pi_i\right]$. To finish the proof, set $\lambda = \left[\pi_1\right] \bullet \left[\pi_0\right]^{-1}$.
\end{proof}
\end{lemma}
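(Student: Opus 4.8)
The plan is to realise both $\sigma_0$ and $\sigma_1$ as restrictions of a single local homeomorphism on the cylinder $[0,1] \times Y$, and then to show that the two resulting restriction homomorphisms of groupoid $C^*$-algebras are $KK$-equivalences; the composite of one with the inverse of the other will be the desired $\lambda$. Concretely, I would fix a homotopy $\{\sigma_t\}$ connecting $\sigma_0$ to $\sigma_1$ and form the local homeomorphism $\Sigma$ on $[0,1] \times Y$ by (\ref{KK2}). Since $\Sigma$ preserves the $[0,1]$-coordinate, the endpoint slices $\{0\} \times Y$ and $\{1\} \times Y$ are closed and totally $\Sigma$-invariant, with open complements $Z_i = \left([0,1] \setminus \{i\}\right) \times Y$. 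Applying the structure theory for closed invariant subsets (Proposition 4.6 of \cite{CT}) then yields surjective $*$-homomorphisms $\pi_i : C^*_r\left(\Gamma_\Sigma\right) \to C^*_r\left(\Gamma_{\sigma_i}\right)$ whose kernels are isomorphic to $C^*_r\left(\Gamma_{\Sigma|_{Z_i}}\right)$.

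The second step is to observe that each kernel is $KK$-contractible. Here the point is simply that $Z_0 = (0,1] \times Y$ and $Z_1 = [0,1) \times Y$, so that in either case $C_0(Z_i)$ is a cone over $C(Y)$ and hence a contractible $C^*$-algebra. Corollary \ref{RScat} (b) then gives that $C^*_r\left(\Gamma_{\Sigma|_{Z_i}}\right)$ is $KK$-contractible.

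The heart of the proof is to promote each $\pi_i$ from a surjection with $KK$-trivial kernel to a genuine $KK$-equivalence, and this is the step I expect to be the main obstacle. Working throughout in the category of separable nuclear $C^*$-algebras (guaranteed by Corollary \ref{RScat} (a)), I would feed the short exact sequence $0 \to \ker \pi_i \to C^*_r\left(\Gamma_\Sigma\right) \to C^*_r\left(\Gamma_{\sigma_i}\right) \to 0$ into the long exact sequences in $KK$-theory (Theorem 19.5.7 of \cite{Bl}): because $\ker \pi_i$ is $KK$-contractible, Kasparov multiplication by $[\pi_i]$ induces isomorphisms on the relevant $KK$-groups in both variable positions. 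A diagram chase then produces a two-sided inverse $[\pi_i]^{-1}$: surjectivity in one position gives a right inverse, and injectivity in the other, combined with associativity of the Kasparov product, shows this right inverse is also a left inverse. Setting $\lambda = [\pi_1] \bullet [\pi_0]^{-1}$ gives the required $KK$-equivalence.

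It remains to track the unit. Each $\pi_i$ is unital, because it is a restriction map and the unit of $C^*_r\left(\Gamma_\Sigma\right)$ — the function equal to $1$ on the (clopen) unit space — restricts to the unit of $C^*_r\left(\Gamma_{\sigma_i}\right)$. Hence $(\pi_i)_*$ sends the class of the unit to the class of the unit in $K_0$, and since $\left([\pi_0]^{-1}\right)_* = \left((\pi_0)_*\right)^{-1}$, the induced map $\lambda_*$ carries $[1]$ in $K_0\left(C^*_r\left(\Gamma_{\sigma_0}\right)\right)$ to $[1]$ in $K_0\left(C^*_r\left(\Gamma_\Sigma\right)\right)$ and then to $[1]$ in $K_0\left(C^*_r\left(\Gamma_{\sigma_1}\right)\right)$, as required. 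Compared with the $KK$-equivalence step, this bookkeeping and the contractibility identification are routine.
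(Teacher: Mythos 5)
Your proposal is correct and follows essentially the same route as the paper: form $\Sigma$ on the cylinder, invoke Proposition 4.6 of \cite{CT} for the surjections $\pi_i$ with kernels $C^*_r\left(\Gamma_{\Sigma|_{Z_i}}\right)$, deduce $KK$-contractibility of the kernels from Corollary \ref{RScat}, and use Theorem 19.5.7 of \cite{Bl} together with associativity of the Kasparov product to invert each $[\pi_i]$, setting $\lambda = \left[\pi_1\right] \bullet \left[\pi_0\right]^{-1}$. Your explicit remark that the $\pi_i$ are unital (so the unit class is preserved) is a point the paper leaves implicit, but it is the same argument.
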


Recall that a continuous map $\psi : X \to X$ is \emph{strongly transitive} when
for every open non-empty subset $V$ of $X$, there is an 
$N \in \mathbb N$ such that $\bigcup_{i=0}^N \psi^i(V) = X$.
It was shown in \cite{DS} that when
$\phi : X \to X$ is a surjective local homeomorphism on a compact
metric space $X$, the $C^*$-algebra $C^*_r\left(\Gamma_{\phi}\right)$
is simple if and only if $X$ is not a finite set and $\phi$ is
strongly transitive. In \cite{Th2} and \cite{CT} it was shown that the
$C^*$-algebra of a non-injective and surjective
strongly transitive local homeomorphism on a compact metric space of finite covering
dimension is purely infinite. Combined with Lemma \ref{KKlem} this
leads to the following.

\begin{thm}\label{iso}  Let $X$ be a finite dimensional compact metric
  space and $\varphi : X \to X$, $\phi: X \to X$
  two surjective local homeomorphisms; both non-injective and strongly transitive. Assume that $\phi$ and $\varphi$ are homotopic
  as local homeomorphisms. It follows that
  $C^*_r\left(\Gamma_{\phi}\right) \simeq
  C^*_r\left(\Gamma_{\varphi}\right)$.
\begin{proof} It follows from Corollary 6.6 of \cite{CT} that the
  classification result of Kirchberg and Phillips applies,
  cf. Corollary 4.2.2 of \cite{Ph1}. The conclusion follows therefore
  from Lemma \ref{KKlem}.
\end{proof}
\end{thm}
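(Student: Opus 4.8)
The plan is to show that both $C^*_r\left(\Gamma_{\varphi}\right)$ and $C^*_r\left(\Gamma_{\phi}\right)$ belong to the class of $C^*$-algebras for which the Kirchberg--Phillips theorem turns $KK$-equivalence into isomorphism, and then to feed in the unit-preserving $KK$-equivalence already produced in Lemma \ref{KKlem}. Thus the argument splits into two parts: verifying the classifiability hypotheses for each algebra, and assembling the ingredients.

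First I would check that each of the two algebras is a unital, separable, nuclear, purely infinite, simple $C^*$-algebra lying in the bootstrap category of Rosenberg and Schochet. Unitality is immediate since $X$ is compact. Simplicity follows from the result of \cite{DS} recalled above: $\varphi$ and $\phi$ are strongly transitive and $X$ is not a finite set, the latter because a local homeomorphism of a finite discrete space would be bijective, contradicting non-injectivity. Pure infiniteness follows from the result of \cite{Th2} and \cite{CT}, whose hypotheses --- a non-injective, surjective, strongly transitive local homeomorphism on a compact metric space of finite covering dimension --- are exactly what we have assumed. Separability, nuclearity, and membership in the bootstrap category (hence validity of the universal coefficient theorem) are furnished by Corollary \ref{RScat}(a).

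With these hypotheses in hand I would invoke the classification theorem of Kirchberg and Phillips, cf. \cite{Ph1}: two unital Kirchberg algebras satisfying the universal coefficient theorem are isomorphic exactly when there is a $KK$-equivalence between them carrying the class of the unit to the class of the unit. Lemma \ref{KKlem} delivers precisely such a $KK$-equivalence $\lambda$, since $\varphi$ and $\phi$ are homotopic as local homeomorphisms and the lemma was arranged so that the induced map sends $\left[1_{C^*_r\left(\Gamma_{\varphi}\right)}\right]$ to $\left[1_{C^*_r\left(\Gamma_{\phi}\right)}\right]$ in $K_0$.

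The one point that requires care --- and the reason Lemma \ref{KKlem} was stated with the unit tracked explicitly --- is that the unital form of the Kirchberg--Phillips theorem demands a $KK$-equivalence respecting the distinguished $K_0$-classes of the units; a bare $KK$-equivalence would yield only a stable isomorphism. So the substantive content has already been packaged into Lemma \ref{KKlem}, and the remaining difficulty is merely confirming that the cited simplicity and pure-infiniteness criteria apply, which is a matter of matching hypotheses rather than of fresh argument.
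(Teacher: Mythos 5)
Your proposal is correct and follows the same route as the paper: verify that Corollary 6.6 of \cite{CT} (pure infiniteness) together with simplicity, nuclearity and the UCT puts both algebras in the scope of the Kirchberg--Phillips classification, then apply the unit-preserving $KK$-equivalence from Lemma \ref{KKlem}. You have merely spelled out the hypothesis-checking that the paper leaves implicit.
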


\subsection{Strong transitivity and exactness}

Let $X$ be a compact metric space which is not a finite set, and $\phi : X \to X$ a continuous
map. Recall that $\phi$ is \emph{exact} when for every open non-empty subset $V \subseteq X$ there is an $N \in \mathbb
N$ such that $\phi^N(U) = X$. Thus exactness implies strong
transitivity while the converse is generally not true. (For example an
irrational rotation of the circle is strongly transitive but not
exact.) It was pointed out in \cite{DS} that a surjective local homeomorphism $\varphi : X \to X$
is exact if and only if $C^*_r\left(R_{\varphi}\right)$ is
simple. Thus $\varphi$ is exact if and only if
$C^*_r\left(\Gamma_{\varphi}\right)$ and
$C^*_r\left(R_{\varphi}\right)$ are both simple while $\varphi$ is strongly
  transitive and not exact if and only
  $C^*_r\left(\Gamma_{\varphi}\right)$ is simple while
$C^*_r\left(R_{\varphi}\right)$ is not.

With this section we want to point out that for locally injective 
and surjective endomorphisms of compact groups, strong transitivity is
equivalent to exactness.



\begin{lemma}\label{equivalent1} Let $\phi : X \to X$ be a continuous,
  surjective
  and open. The following are equivalent:
\begin{enumerate}
\item[i)] $\phi$ is strongly transitive.
\item[ii)] $\bigcup_{n,m \in \mathbb N}
  \phi^{-m}\left(\phi^n(x)\right)$ is dense in $X$ for all $x \in X$.
\item[iii)] $\bigcup_{n \in \mathbb N} \phi^{-n}(x)$ is dense in $X$ for all $x \in X$. 
\end{enumerate}
\begin{proof} i) $\Rightarrow$ iii): If there is point $x \in X$ such that 
$$
F = \overline{ \bigcup_{n \in \mathbb N} \phi^{-n}(x)}
$$
is not all of $X$, the set $U = X \backslash F$ is open, non-empty
and satisfies that $x \notin \bigcup_n \phi^n(U)$, contradicting the
strong transitivity of $\phi$.

ii) $\Rightarrow$ i): Consider an open non-empty subset $V$
of $X$. For every $x \in X$ there are $n,m \in \mathbb N$ such that
$\phi^{-m}\left(\phi^n(x)\right) \cap V \neq \emptyset$, i.e. $x \in
\phi^{-n}\left(\phi^m(V)\right)$. Since $\phi$ is continuous and open, and $X$
compact there is
an $N \in \mathbb N$ such that $X = \bigcup_{i,j \leq N}
\phi^{-i}\left(\phi^j(V)\right)$. Then $X = \phi^N(X) =
\bigcup_{i=0}^{2N}\phi^i(V)$. 

Since iii) $\Rightarrow$ ii) is trivial, the proof is complete. 
\end{proof}
\end{lemma}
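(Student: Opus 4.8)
The plan is to prove the three conditions equivalent by establishing the cyclic chain $\mathrm{i)} \Rightarrow \mathrm{iii)} \Rightarrow \mathrm{ii)} \Rightarrow \mathrm{i)}$. Two of these links are essentially formal, and the real content is concentrated in the passage $\mathrm{ii)} \Rightarrow \mathrm{i)}$, where both the openness of $\phi$ and the compactness of $X$ must be invoked.

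For $\mathrm{iii)} \Rightarrow \mathrm{ii)}$ I would simply observe that setting $n=0$ in the double union gives $\bigcup_{m \in \mathbb{N}} \phi^{-m}(x) \subseteq \bigcup_{n,m \in \mathbb{N}} \phi^{-m}\bigl(\phi^n(x)\bigr)$, so the larger set contains the dense set supplied by $\mathrm{iii)}$ and is therefore itself dense. For $\mathrm{i)} \Rightarrow \mathrm{iii)}$ I would argue by contraposition: if for some $x \in X$ the closure $F$ of $\bigcup_{n} \phi^{-n}(x)$ is a proper subset, then $U := X \setminus F$ is open and non-empty, and I claim $x$ lies in no forward image $\phi^n(U)$. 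Indeed, if $\phi^n(u) = x$ with $u \in U$, then $u \in \phi^{-n}(x) \subseteq F$, contradicting $u \notin F$. Hence $x \notin \bigcup_{i=0}^{N} \phi^i(U)$ for every $N$, so $\phi$ is not strongly transitive.

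For $\mathrm{ii)} \Rightarrow \mathrm{i)}$, which I expect to be the crux, I would fix a non-empty open set $V$ and show that the sets $\phi^{-n}\bigl(\phi^m(V)\bigr)$, for $n,m \in \mathbb{N}$, cover $X$. Given any $x \in X$, density of $\bigcup_{n,m} \phi^{-m}(\phi^n(x))$ forces this set to meet $V$, so there exist $v \in V$ and $n,m$ with $\phi^m(v) = \phi^n(x)$; rearranging gives $x \in \phi^{-n}\bigl(\phi^m(V)\bigr)$. Because $\phi$ is continuous and open, each such set is open, so by compactness of $X$ finitely many suffice, yielding a uniform bound $N$ with $X = \bigcup_{i,j \leq N} \phi^{-i}\bigl(\phi^j(V)\bigr)$. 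Applying the surjection $\phi^N$ and using the inclusion $\phi^N\bigl(\phi^{-i}(A)\bigr) \subseteq \phi^{N-i}(A)$ (valid for $i \leq N$) converts this into $X = \phi^N(X) \subseteq \bigcup_{0 \le k \le 2N} \phi^k(V)$, which is the strong transitivity condition.

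The main obstacle is precisely this last manipulation: one must verify the inclusion $\phi^N \circ \phi^{-i} \subseteq \phi^{N-i}$ carefully (this is where surjectivity of $\phi^N$ enters, via $\phi^N(X)=X$) and keep track of the index ranges so that the resulting forward exponents $N-i+j$ stay non-negative and bounded by $2N$. It is exactly the uniform bound $N$ extracted from compactness that makes the finitely many mixed forward–backward iterates collapse into the pure forward union $\bigcup_{i=0}^{2N}\phi^i(V)$; without it the argument would only produce a covering by infinitely many open sets with no control on the exponents.
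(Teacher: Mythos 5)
Your proposal is correct and follows essentially the same route as the paper: the same cyclic chain, the same contradiction argument for i) $\Rightarrow$ iii), and the same compactness-plus-openness argument for ii) $\Rightarrow$ i), with your explicit verification of $\phi^N\bigl(\phi^{-i}(A)\bigr) \subseteq \phi^{N-i}(A)$ merely spelling out a step the paper leaves implicit in the line $X = \phi^N(X) = \bigcup_{i=0}^{2N}\phi^i(V)$.
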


\begin{prop}\label{exactendo} Let $H$ be a compact group and
  $\alpha_0 : H \to H$ a continuous surjective group endomorphism with
  finite kernel. Then $\alpha_0$ is exact if and only
  if $\alpha_0$ is strongly transitive.
\begin{proof} Note that $\alpha_0$ is open since its kernel is
  finite. Assume that $\alpha_0$ is strongly transitive, and let $1
  \in H$ be the neutral element. Consider an open non-empty
  subset $U \subseteq H$. Set
$$
\Delta = \bigcup_n \ker \alpha_0^n = \bigcup_n \alpha_0^{-n}(1) .
$$  
Then $\Delta$ is dense in $H$ by Lemma
\ref{equivalent1}. For every $x \in H$,
$$
\bigcup_n \alpha_0^{-n}\left(\alpha_0^n(x)\right) = \left\{ z x : \ z \in
  \Delta \right\},
$$
and it follows that $\bigcup_n \alpha_0^{-n}\left(\alpha_0^n(x)\right)$ is
dense in $H$ for every $x \in H$. In particular, there is
for every $x$ an $n \in \mathbb N$ such that $x \in
\alpha_0^{-n}\left(\alpha_0^n(U)\right)$. Since 
$$
\alpha_0^{-m}\left(\alpha_0^m(U)\right) \subseteq
\alpha_0^{-m-1}\left(\alpha_0^{m+1}(U)\right)
$$
for all $m$ the compactness of $H$ implies that $H
= \alpha_0^{-N}\left(\alpha_0^N(U)\right)$ and therefore that $H =
\alpha_0^N(U)$ for some $N$.
\end{proof}
\end{prop}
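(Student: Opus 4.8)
The plan is to prove only the nontrivial implication, strong transitivity $\Rightarrow$ exactness, since the reverse implication holds for any continuous map (if $\alpha_0^N(V)=H$ then a fortiori $\bigcup_{i=0}^N \alpha_0^i(V)=H$) and was already noted above. The guiding idea is that for a \emph{group} endomorphism the dynamical density condition supplied by Lemma \ref{equivalent1} can be made homogeneous: density of a single preimage set at the identity propagates to every point by translation, and this homogeneity is precisely the feature that a generic strongly transitive map, such as an irrational rotation, lacks.

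First I would record that $\alpha_0$ is open. A continuous surjective homomorphism of compact groups factors through the topological isomorphism $H/\ker\alpha_0 \cong H$, and since $\ker\alpha_0$ is finite the quotient map $H \to H/\ker\alpha_0$ is open; hence $\alpha_0$ is open. This verifies the hypotheses of Lemma \ref{equivalent1} (continuous, surjective, open), so strong transitivity is now equivalent to its conditions (ii) and (iii).

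Next comes the algebraic heart of the argument. Writing $\Delta = \bigcup_n \ker\alpha_0^n = \bigcup_n \alpha_0^{-n}(1)$, condition (iii) applied at the neutral element $1$ says exactly that $\Delta$ is dense in $H$. For an arbitrary $x \in H$ one has $\alpha_0^n(y)=\alpha_0^n(x)$ iff $\alpha_0^n(yx^{-1})=1$ iff $yx^{-1}\in\ker\alpha_0^n$, so $\alpha_0^{-n}(\alpha_0^n(x)) = (\ker\alpha_0^n)\,x$ and therefore $\bigcup_n \alpha_0^{-n}(\alpha_0^n(x)) = \Delta x$. Since right translation $y \mapsto yx$ is a homeomorphism, $\Delta x$ is dense, i.e. $\bigcup_n \alpha_0^{-n}(\alpha_0^n(x))$ is dense in $H$ for \emph{every} $x$. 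This homogenization step is where the group structure does all the work, and I expect it to be the only real point of substance.

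Finally I would fix an open nonempty $U \subseteq H$ and convert density into a covering. For each $x$, density of $\bigcup_n \alpha_0^{-n}(\alpha_0^n(x))$ forces it to meet $U$; reading the resulting equation $\alpha_0^n(u)=\alpha_0^n(x)$ (with $u\in U$) backwards gives an $n$ with $x \in \alpha_0^{-n}(\alpha_0^n(U))$. The sets $\alpha_0^{-m}(\alpha_0^m(U))$ are open, because $\alpha_0$ is continuous and open, and are increasing in $m$ (if $\alpha_0^m(y)=\alpha_0^m(u)$ then $\alpha_0^{m+1}(y)=\alpha_0^{m+1}(u)$); we have just shown they cover $H$. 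Compactness then yields a single $N$ with $H = \alpha_0^{-N}(\alpha_0^N(U))$, and applying the surjection $\alpha_0^N$, together with $\alpha_0^N\bigl(\alpha_0^{-N}(S)\bigr)=S$ for $S$ in its image, gives $H = \alpha_0^N(U)$, which is exactness. The compactness-of-an-increasing-open-cover finish is routine once the homogeneous density statement is in hand.
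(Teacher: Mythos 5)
Your proposal is correct and follows essentially the same route as the paper's proof: openness from finiteness of the kernel, density of $\Delta=\bigcup_n\ker\alpha_0^n$ via Lemma \ref{equivalent1}, translation to get $\bigcup_n\alpha_0^{-n}(\alpha_0^n(x))=\Delta x$ dense for all $x$, and the increasing open cover plus compactness to conclude $H=\alpha_0^N(U)$. The only additions are your explicit remarks on the trivial converse and on why $\alpha_0$ is open, both of which the paper leaves implicit.
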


\section{The algebra of an affine map on a compact abelian group}


Let $H$ be a compact metrizable abelian group and let $G = \widehat{H}$ be
its Pontryagin dual group. Let $\alpha : H \to H$ be a continuous affine map. That is, $\alpha$
is the composition of a continuous group endomorphism
$\alpha_0 : H \to H$ and the translation by an element $h_0 \in H$, viz.
$$
\alpha(h) = h_0\alpha_0(h).
$$ 
We will refer to $\alpha_0$ as \emph{the linear part} of $\alpha$. To ensure that the transformation groupoid of $\alpha$ is a
well-behaved \'etale groupoid it is necessary to assume that
$\alpha$ is a local homeomorphism.

Let $\left< \cdot, \ \cdot\right>$ denote
the duality between $H$ and $G$. We can then define an endomorphism
$\phi : G \to G$ such that
\begin{equation}\label{dual}
\left<\phi(g),h\right> = \left<g,\alpha_0(h)\right>.
\end{equation}
\begin{lemma}\label{first} The following conditions are equivalent.
\begin{enumerate} 
\item[i)] $\alpha$ is a local homeomorphism.
\item[ii)] $\alpha_0$ is a local homeomorphism.
\item[iii)] $\ker \alpha_0$ and $\coker \alpha_0$ are finite.
\item[iv)] $\ker \phi$ and $\coker \phi$ are finite.
\item[v)] $\ker \alpha_0$ and $\ker \phi$ are finite.
\end{enumerate}
\end{lemma}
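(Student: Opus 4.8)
The plan is to combine one easy reduction with a topological step and a duality step. First I would dispose of i) $\Leftrightarrow$ ii): since translation by $h_0$ is a homeomorphism of $H$ and $\alpha$ is the composition of $\alpha_0$ with this translation, $\alpha$ is a local homeomorphism precisely when $\alpha_0$ is. This reduces everything to the linear part, and it remains to link ii), iii), iv) and v). I would organise the rest as ii) $\Leftrightarrow$ iii), a purely topological statement about endomorphisms of the compact group $H$, followed by iii) $\Leftrightarrow$ iv) $\Leftrightarrow$ v), obtained from Pontryagin duality applied to the transpose $\phi = \widehat{\alpha_0}$.

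For ii) $\Leftrightarrow$ iii) I would use the characterisation that a continuous endomorphism of a compact group is a local homeomorphism if and only if it is locally injective and has open image. If $\alpha_0$ is a local homeomorphism, then it is injective on some neighbourhood of the neutral element, so $\ker\alpha_0$ is discrete; being closed in the compact group $H$ it is finite, and the image $\alpha_0(H)=\bigcup_h \alpha_0(hU)$ is open. Conversely, a finite kernel forces injectivity near $1$ and hence, by translating with group elements, local injectivity everywhere; and $\alpha_0(H)$ is always a closed subgroup, so it is open exactly when it has finite index, i.e. when $\coker\alpha_0$ is finite. When both conditions hold, $\alpha_0 : H \to \alpha_0(H)$ is a continuous surjection of compact groups with finite kernel, hence open by the open mapping theorem; a continuous, open, locally injective map is a local homeomorphism, and since $\alpha_0(H)$ is open in $H$ this makes $\alpha_0$ a local homeomorphism $H \to H$.

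The equivalences iii) $\Leftrightarrow$ iv) $\Leftrightarrow$ v) I would read off from two annihilator identities. Directly from (\ref{dual}), $g \in \ker\phi$ means that $g$ annihilates $\alpha_0(H)$, so $\ker\phi = \alpha_0(H)^{\perp} \cong \widehat{H/\alpha_0(H)} = \widehat{\coker\alpha_0}$; symmetrically one checks $(\im\phi)^{\perp} = \ker\alpha_0$, and since $G=\widehat H$ is discrete every subgroup is closed, so $\widehat{\coker\phi} \cong \ker\alpha_0$. Because Pontryagin duality sends finite groups to finite groups, these identifications say exactly that $\ker\phi$ is finite iff $\coker\alpha_0$ is finite, and $\coker\phi$ is finite iff $\ker\alpha_0$ is finite. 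Thus iii) is equivalent to iv); and since v) demands precisely finiteness of $\ker\alpha_0$ and $\ker\phi$, the same identifications turn v) into ``$\ker\alpha_0$ and $\coker\alpha_0$ finite'', which is iii).

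The routine parts are the translation reduction and the annihilator computations. The step needing the most care is ii) $\Leftrightarrow$ iii): one must correctly extract finiteness of $\ker\alpha_0$ from discreteness-plus-compactness, propagate local injectivity by homogeneity, and invoke the open mapping theorem to upgrade ``finite kernel and open image'' to a genuine local homeomorphism. The duality bookkeeping is standard once one observes that the discreteness of $G$ makes all the relevant subgroups closed, so no closure or completion subtleties intervene in the identifications $\ker\phi \cong \widehat{\coker\alpha_0}$ and $\widehat{\coker\phi} \cong \ker\alpha_0$.
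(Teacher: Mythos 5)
Your argument is correct and complete: the translation reduction for i)\,$\Leftrightarrow$\,ii), the discreteness/compactness and open-mapping-theorem reasoning for ii)\,$\Leftrightarrow$\,iii), and the annihilator identities $\ker\phi=\alpha_0(H)^{\perp}\cong\widehat{\coker\alpha_0}$ and $\phi(G)^{\perp}=\ker\alpha_0$ (with $\phi(G)$ closed because $G$ is discrete) all hold and together give the full cycle of equivalences. The paper itself declares the lemma ``Straightforward'' and records no argument, so there is nothing to compare against; your write-up supplies exactly the details the authors omitted.
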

\begin{proof} Straightforward.
\end{proof}

Observe that when $H$ is connected $\coker \alpha_0$ is finite if and only if
$\alpha_0$ is surjective. 

Assume that $\alpha : H \to H$ is an affine local homeomorphism. For
each $g \in G$ we define a unitary $U'_g$
in $C(H) \subseteq C^*_r\left(\Gamma_{\alpha}\right)$ in the usual
way: $U'_g(x) = \left< g,x\right>$. Then $U'$ is a representation of
$G$ by unitaries in $C^*_r\left(\Gamma_{\alpha}\right)$. Set $N = \# \ker \alpha_0$ and
define an isometry $V_{\alpha} \in C_c\left(\Gamma_{\alpha}\right)$
such that
$$
V_{\alpha}(x,k,y) = \begin{cases} \frac{1}{\sqrt{N}} & \ \text{when} \ 
  k = 1, \ y = \alpha(x) \\ 0 & \ \text{otherwise.} \end{cases}
$$
It is straightforward to check that $V_{\alpha}U'_g = \left<
  g,h_0\right> U'_{\phi(g)}V_{\alpha}$ and that 
$$
\sum_{g \in
  G/\phi(G)} U'_gV_{\alpha}V_{\alpha}^*{U'_g}^* = 1. 
$$
It follows that we
can consider the universal $C^*$-algebra $\mathcal A[\alpha]$
generated by unitaries $U_g, g \in G$, and an isometry $S$ such that
\begin{equation}\label{relations}
U_gU_h = U_{g+h} \ \ \ \  \ SU_g = \left<g, h_0\right> U_{\phi(g)} S
\ \ \ \  \
\sum_{g \in G/\phi(G)} U_gSS^*U_g^* = 1 .
\end{equation}
Furthermore, there is a $*$-homomorphism $\nu : \mathcal A[\alpha] \to
C^*_r\left(\Gamma_{\alpha}\right)$ such that $\nu(U_g) = U'_g$ and
$\nu(S) = V_{\alpha}$. Note that the existence of $\nu$
implies that the canonical map $C(H) \to \mathcal A[\alpha]$ coming from the generators
$U_g, g \in G$, is injective.

\begin{thm}\label{universal} Assume that $\alpha$ is a local
  homeomorphism. Then $C^*_r\left(\Gamma_{\alpha}\right) \simeq
  \mathcal A[\alpha]$.
\end{thm} 
\begin{proof} To construct the desired isomorphism we will show
  that the isomorphism $\rho : \mathcal O_E \to
  C^*_r\left(\Gamma_{\alpha}\right)$ from Proposition 3.2 in \cite{Th2}
factorises through $\nu$, i.e. that there is a
$*$-homomorphism $\mu : \mathcal O_E \to \mathcal A[\alpha]$ such that
\begin{equation}\label{triangle}
\begin{xymatrix}{
{\mathcal O_E} \ar[rr]^-{\rho} \ar[dr]_-{\mu} & & C^*_r\left(\Gamma_{\alpha}\right) \\
 & {\mathcal A[\alpha]} \ar[ur]_-{\nu} & }
\end{xymatrix}
\end{equation}
commutes. Since $\rho$ is an isomorphism this will complete the proof
if we also show that $\mu$ is surjective. Let $g_i, i = 1,2, \dots, N$, be
elements in $G$ representing the distinct elements of
$G/\phi(G)$. Notice that it follows from the third of the
three relations in (\ref{relations}) that
$$
S^*U_{g_i}^*U_{g_j}S = \begin{cases} 1 \ &
  \text{when $i =j$} \\ 0 \ & \text{when $i \neq j$.} \end{cases}
$$
Combined with the second relation this implies that
$$
S^*U_gS = \begin{cases} 0 & \ \text{when} \ g \notin \phi(G) \\
  \overline{\left< k, h_0\right>} U_{k} & \ \text{when} \ g = \phi(k),
  \ k\in G. \end{cases}
$$
In particular, it follows that the closure of $C(H)S$ in $\mathcal A[\alpha]$ is a Hilbert
$C(H)$-module with the 'inner product' $(a,b) = a^*b$. The existence
of the $*$-homomorphism $\nu$, or a simple direct
calculation shows that
$$
V_{\alpha}^* U'_gV_{\alpha} = \begin{cases} 0 & \ \text{when} \ g \notin \phi(G) \\
  \overline{\left< k, h_0\right>} U'_{k} & \
  \text{when} \ g = \phi(k), \ k \in G. \end{cases} .
$$
Since $E$ is the closure of
$C(H)V_{\alpha}$ in $C^*_r(\Gamma_{\alpha})$ it follows that we can define an isometry $t : E \to
\mathcal A[\alpha]$ such that $t(fV_{\alpha}) = fS$ for all $f \in C(H)$. Together with
the inclusion $\pi : C(H) \to \mathcal A[\alpha]$ this isometry $t$
give us a representation
of the $C^*$-correspondence $E$ in the sense of Katsura, cf. Definition
2.1 of \cite{Ka}. To
show that this representation is covariant in the sense of \cite{Ka} it suffices by Proposition
3.3 in \cite{Ka} to show, in Katsuras notation, that $C(H) \subseteq \psi_t\left(\mathcal
  K(E)\right)$. This follows from the observation that
$$
U_k = \sum_j U_kU_{g_j}SS^*U_{g_j}^* = \sum_{j} \psi_t\left( \theta_{U'_k U'_{g_j} V_{\alpha} , \ U'_{g_j}
    V_{\alpha}} \right)
$$
for all $k$. Thus $(\pi,t)$ is covariant and by Definition 3.5 in \cite{Ka} there is therefore
a $*$-homomorphism $\mu : \mathcal O_E \to \mathcal A[\alpha]$ whose range is generated
by $\pi(C(H))$ and $t(E)$. But this is all of $\mathcal A[\alpha]$, i.e. $\mu$ is surjective.

It remains now only to show that $\rho = \nu \circ \mu$. To this end
observe that the two $*$-homomorphisms agree on the canonical copies
of $C(H)$ and $E$ inside $\mathcal O_E$. As $\mathcal O_E$ is
generated by these subsets the proof is complete.
\end{proof}

For a locally homeomorphic affine map $\alpha$, there is a special
feature of the six-terms exact
sequence of Theorem \ref{6terms} which has been observed already in
\cite{EHR} and \cite{CV} in the case of an endomorphism. Set $N = \#
\ker \alpha_0$ and choose
representatives $g_i, i = 1,2, \dots, N$, in $G$ for the elements of
$G/\phi(G)$. Define $\xi_i \in C_c\left(\Gamma_{\alpha}\right)$ such
that
$$
\xi_i (x,k,y) = \begin{cases} N^{-1/2} \left<g_i,x\right> & \ \text{when} \
  (x,k,y) \in \Gamma_{\alpha}(1,0) \\ 0 & \ \text{otherwise.}
\end{cases}
$$  
Then $\xi_i \in E$ and a straightforward calculation shows that 
$$
\xi_i^*\xi_j = \begin{cases} 1 & \ \text{when} \ i = j \\ 0 & \
  \text{when} \ i \neq j , \end{cases}  
$$
and that $\sum_{i=1}^N \xi_i\xi_i^* = 1$. It follows that $E$ is
isomorphic to $C(H)^N$ as a Hilbert $C(H)$-module under the
isomorphism 
$f \mapsto \left(\xi_1^*f,\xi^*_2f, \dots, \xi^*_Nf\right)$.
It follows that the element 
$[E] \in KK(C(H),C(H))$
 is represented by
the $*$-homomorphism $e : C(H) \to M_N(C(H))$ given by
$e(f) = \left( \xi_i^* f\xi_j\right)_{i,j}$. A straightforward calculation shows that 
$e( f \circ \alpha) = \diag \left(f,f, \dots, f\right)$.
Thus
\begin{equation}\label{Nid}
[E]_* \circ \alpha_* = N \id
\end{equation}
on $K$-theory. In some cases this identity is enough to determine the
action of $[E]$ on $K$-theory.

\subsection{Affine maps with the same linear part} The following result follows immediately from Lemma \ref{KKlem} and Theorem \ref{iso}.

\begin{thm}\label{KK} Let $H$ be a compact abelian path-connected second
  countable group and
  $\alpha_0 : H \to H$ a continuous group endomorphism with
  finite non-trivial kernel. Let $h_i \in H, i = 1,2$, and define $\alpha_i : H \to
  H$ such that $\alpha_i(h) = h_i\alpha_0(h)$.

a) It follows that $
C^*_r\left(\Gamma_{\alpha_1}\right)$ and
$C^*_r\left(\Gamma_{\alpha_2}\right)$ are KK-equivalent.

\smallskip

b) Assume in addition that $\alpha_1$
  and $\alpha_2$ are both strongly transitive, and that $H$ is of
  finite covering dimension. It follows that 
$$
C^*_r\left(\Gamma_{\alpha_1}\right) \simeq C^*_r\left(\Gamma_{\alpha_2}\right) .
$$ 
\end{thm}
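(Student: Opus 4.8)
The plan is to realise $\alpha_1$ and $\alpha_2$ as homotopic local homeomorphisms in the sense of Subsection~\ref{homotopy}, and then simply feed this into Lemma~\ref{KKlem} for part~a) and into Theorem~\ref{iso} for part~b). First I would record the standing facts. Since $H$ is compact and second countable it is metrizable, and $\alpha_0$ is a local homeomorphism (this is what places $C^*_r\left(\Gamma_{\alpha_i}\right)$ among the algebras under study), so $\coker \alpha_0$ is finite by Lemma~\ref{first}; as $H$ is connected, the observation following Lemma~\ref{first} then forces $\alpha_0$ to be surjective. Because each $\alpha_i$ is $\alpha_0$ followed by translation by $h_i$, which is a homeomorphism of $H$, each $\alpha_i$ is a surjective local homeomorphism.

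To build the homotopy I would use path-connectedness of $H$ to pick a continuous path $s \mapsto \gamma(s) \in H$, $s\in[0,1]$, with $\gamma(0)=h_1$ and $\gamma(1)=h_2$, and set $\sigma_s(h) = \gamma(s)\alpha_0(h)$, so that $\sigma_0 = \alpha_1$ and $\sigma_1 = \alpha_2$. The one genuine point to check is that $\Sigma(s,h) = \left(s,\sigma_s(h)\right)$ is a local homeomorphism of $[0,1]\times H$, as the definition of a homotopy of local homeomorphisms demands. I would verify this by factoring
$$
\Sigma \ = \ T \circ \left(\id_{[0,1]} \times \alpha_0\right), \qquad T(s,k) = \left(s,\gamma(s)k\right).
$$
Here $\id_{[0,1]}\times\alpha_0$ is a local homeomorphism because $\alpha_0$ is, while $T$ is a genuine homeomorphism of $[0,1]\times H$ with continuous inverse $(s,k)\mapsto\left(s,\gamma(s)^{-1}k\right)$, continuity coming from that of $\gamma$ and of the group operations. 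A composition of a local homeomorphism with a homeomorphism is again a local homeomorphism, so $\Sigma$ is one and $\{\sigma_s\}$ is a homotopy of local homeomorphisms connecting $\alpha_1$ to $\alpha_2$.

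With the homotopy established, part~a) is immediate: Lemma~\ref{KKlem}, applied with $Y=H$, yields the desired KK-equivalence $\lambda\in KK\left(C^*_r\left(\Gamma_{\alpha_1}\right),C^*_r\left(\Gamma_{\alpha_2}\right)\right)$ whose induced map on $K_0$ sends the class of the unit to the class of the unit. For part~b) I would add the observation that each $\alpha_i$ is non-injective: its fibres coincide with those of $\alpha_0$ (translation being a bijection), and $\ker\alpha_0$ is non-trivial by hypothesis. Thus $\alpha_1$ and $\alpha_2$ are two non-injective, surjective, strongly transitive local homeomorphisms on the finite-dimensional compact metric space $H$ that are homotopic as local homeomorphisms, and Theorem~\ref{iso} delivers the isomorphism $C^*_r\left(\Gamma_{\alpha_1}\right) \simeq C^*_r\left(\Gamma_{\alpha_2}\right)$.

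The whole argument is really a matter of assembling the hypotheses of the two cited results, so I do not expect a serious obstacle; this matches the remark preceding the statement that the theorem follows immediately from Lemma~\ref{KKlem} and Theorem~\ref{iso}. The only step carrying any content is the verification that the fibrewise translation $T$ is a homeomorphism, which is what makes $\Sigma$ a local homeomorphism and hence licenses the word ``homotopy''; everything else is bookkeeping with the standing assumptions (local homeomorphism, connectedness, non-triviality of $\ker\alpha_0$, finite covering dimension).
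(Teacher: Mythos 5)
Your proposal is correct and follows exactly the route the paper intends: the paper gives no separate proof beyond asserting that the theorem ``follows immediately from Lemma~\ref{KKlem} and Theorem~\ref{iso}'', and the homotopy $\sigma_s(h)=\gamma(s)\alpha_0(h)$ along a path $\gamma$ from $h_1$ to $h_2$ is precisely the construction that the path-connectedness hypothesis is there to enable. Your verification that $\Sigma$ is a local homeomorphism, by factoring it as a fibrewise translation composed with $\id_{[0,1]}\times\alpha_0$, correctly supplies the one detail the paper leaves implicit.
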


\begin{cor}\label{affinecon} Let $H$ be a compact abelian path-connected second
  countable group of finite covering dimension. Let 
  $\alpha : H \to H$ be an affine map whose linear part $\alpha_0$ is a continuous group endomorphism with
  finite non-trivial kernel. Assume that $\alpha$ is exact. Then
  $\alpha_0$ is exact and 
$$
C^*_r\left(\Gamma_{\alpha}\right) \simeq C^*_r\left(\Gamma_{\alpha_0}\right) .
$$ 
\end{cor}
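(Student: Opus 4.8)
The plan is to recognise $\alpha_0$ itself as an affine map with linear part $\alpha_0$ — namely the one whose translation element is the neutral element $1 \in H$ — so that $\alpha$ and $\alpha_0$ become two affine maps \emph{with the same linear part}. \refthm{KK}(b) is then precisely the tool that identifies their $C^*$-algebras, provided its hypotheses hold: that $\alpha_0$ has finite non-trivial kernel (given), that $H$ is finite-dimensional (given), and that both $\alpha$ and $\alpha_0$ are strongly transitive. Since $\alpha$ is exact by assumption it is in particular strongly transitive, so the only real work is to show that $\alpha_0$ is exact as well. This single fact yields the first assertion of the corollary and, through the implication that exactness implies strong transitivity, supplies the one missing hypothesis, namely the strong transitivity of $\alpha_0$.

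The key observation for the exactness of $\alpha_0$ is that $\alpha$ and $\alpha_0$ differ only by a translation at every iterate. Iterating $\alpha(h) = h_0\alpha_0(h)$ in the (multiplicatively written) abelian group $H$ gives $\alpha^n(h) = c_n\,\alpha_0^n(h)$ with $c_n = \prod_{j=0}^{n-1}\alpha_0^j(h_0)$, so that $\alpha^n(V) = c_n\,\alpha_0^n(V)$ for every subset $V \subseteq H$. Because left translation by $c_n$ is a homeomorphism of $H$, we get $\alpha^N(V) = H$ if and only if $\alpha_0^N(V) = H$; reading this off for every open non-empty $V$ shows that $\alpha$ is exact if and only if $\alpha_0$ is exact. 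In particular the exactness of $\alpha$ forces $\alpha_0$ to be exact, as claimed. (Exactness of $\alpha$ also gives $\alpha^N(H) = H$ for some $N$, whence $\alpha$, and then $\alpha_0$, is surjective; together with the finiteness of $\ker\alpha_0$ and the connectedness of $H$, which makes $\coker\alpha_0$ trivial, this forces both maps to be local homeomorphisms by \reflemma{first}, so the groupoids $\Gamma_\alpha,\Gamma_{\alpha_0}$ and their $C^*$-algebras are well defined.)

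With $\alpha_0$ exact, both $\alpha$ and $\alpha_0$ are exact, hence both strongly transitive, and $H$ is finite-dimensional. Applying \refthm{KK}(b) to the pair $\alpha_1 = \alpha$ (translation element $h_0$) and $\alpha_2 = \alpha_0$ (translation element $1$) then yields $C^*_r\left(\Gamma_{\alpha}\right) \simeq C^*_r\left(\Gamma_{\alpha_0}\right)$. I do not expect a genuine obstacle: the argument is a reduction to \refthm{KK}(b), and the only point requiring care is the translation identity $\alpha^n = c_n\cdot\alpha_0^n$, which must actually be used to transport exactness from $\alpha$ to its linear part rather than being taken for granted — everything else is verification of hypotheses already assembled in the preceding results.
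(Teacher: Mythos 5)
Your proposal is correct and follows the same route as the paper: the paper's proof simply notes that ``it is easy to see that $\alpha$ is exact if and only if $\alpha_0$ is'' and then invokes Theorem~\ref{KK}, which is exactly your reduction. The only difference is that you spell out the ``easy to see'' step via the iterate identity $\alpha^n = c_n\cdot\alpha_0^n$ with $c_n=\prod_{j=0}^{n-1}\alpha_0^j(h_0)$, which is a correct and welcome filling-in of the detail the paper leaves implicit.
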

\begin{proof} It is easy to see that $\alpha$ is exact if and only of
  $\alpha_0$ is. Apply then Theorem \ref{KK}.
\end{proof}

Theorem
\ref{KK} a) is generally not true when $H$ is not connected, but it may be that b) of Theorem \ref{KK} and Corollary \ref{affinecon}
remain true also when $H$ is not
connected; at least we do not know of a counterexample.

\begin{example}\label{On} Let $A$ be a finite abelian group of order
  $N$. On the infinite product $A^{\mathbb N}$ the shift $\alpha_0$,
  given by $\alpha_0((a_n)) = (a_{n+1})$, is a surjective exact
  endomorphism with finite kernel. Fix an element $x = (x_n) \in
  A^{\mathbb N}$ and consider the affine map $\alpha : A^{\mathbb N}
  \to A^{\mathbb N}$ defined by 
$$
\alpha((a_n)) = (x_na_{n+1}) .
$$  
The dual group is $\oplus_{k \in \mathbb N} A$ and the dual
endomorphism $\phi$ of $\alpha_0$ is given by
$$
\phi((a_n)) = (0,a_0,a_1,a_2, \dots )
$$
Consider the $C^*$-algebra $\mathcal A[\alpha]$ generated by unitaries
and an isometry satisfying (\ref{relations}).
For $a \in A$, set $g_a = (a,0,0,0, \dots )$. Then $V_a =
U_{g_a}S,  \ a \in A$, is a collection of isometries in $\mathcal
A[\alpha]$ and $\sum_{a \in A} V_aV_a^* = 1$. Since
$$
U_{g_b} = \sum_{a \in A} V_{b+a}V_a^* 
$$
and
$$
U_{\phi^k(g_a)}V_bV_b^*  \in \mathbb C V_b^*U_{\phi^{k-1}(g_a)}V_b
$$
for all $k \geq 1$ and all $a,b$, we conclude that the $V_a$'s
generate $\mathcal A[\alpha]$. It follows that $\mathcal A[\alpha]$ is
a copy of the Cuntz algebra $\mathcal O_N$. In particular, $\mathcal
A[\alpha]$ is independent of the translation part of $\alpha$.
\end{example}

\section{Strongly transitive affine surjections on tori}\label{A}

A continuous map $T : \mathbb T^n \to \mathbb T^n$ on the
$n$-torus is affine when it is the composition of a group
endomorphism $\phi_A : \mathbb T^n \to \mathbb T^n$ and the translation by an
element $\lambda \in \mathbb T^n$, i.e.
$$
Tx = \lambda\phi_A(x).
$$
Being a continuous group endomorphism $\phi_A$ is determined
by an integer matrix $A  = \left(a_{ij}\right) \in M_n(\mathbb Z)$ by the formula
\begin{equation}\label{endoform}
\phi_A\left(t_1,t_2, \dots, t_n\right) = \left(
  t_1^{a_{11}}t_2^{a_{12}}\cdots t_n^{a_{1n}},
  t_1^{a_{21}}t_2^{a_{22}}\cdots t_n^{a_{2n}} , \cdots \cdots,
  t_1^{a_{n1}}t_2^{a_{n2}}\cdots t_n^{a_{nn}} \right)
\end{equation}
for all $\left(t_1,t_2, \dots, t_n\right) \in \mathbb T^n$. It follows
from Lemma \ref{dual} that
$T$ is a local homeomorphism if and only if it is surjective and
finite-to-one. In fact, since surjectivity of $\phi_A$ is equivalent
to non-singularity of $A$, and hence implies that $T$ is finite-to-one
we conclude that $T$ is a local homeomorphism if and only if $A$ is non-singular,
i.e. $\Det A \neq 0$. We call $A$ \emph{the matrix of the linear part} of $T$.

In \cite{Kr} Krzyzewski has given an algebraic characterisation of
which surjective group endomorphisms of tori are strongly transitive and we will
here use his results to obtain a similar characterisation of which
affine surjections are strongly transitive. To formulate Krzyzewski's
result recall that a non-constant polynomial 
$$
a_kx^k + a_{k-1} x^{k-1}
+ \dots + a_0
$$ 
is called
\emph{unimodular} when $a_i \in \mathbb Z$ for all $i$, $a_k = 1$, and
$a_0 \in \{-1,1\}$.  

\begin{thm}\label{Krz} (Krzyzewski, \cite{Kr}) Let $A \in M_n(\mathbb
  Z)$ be non-singular, i.e. $\Det A \neq 0$, and let $f_{A}(x) = \Det (x1 - A)$ be the
  characteristic polynomial of $A$. The group endomorphism $\phi_A$ of
  $\mathbb T^n$ is strongly transitive if and only if no unimodular
  polynomial divides $f_A$.
\end{thm}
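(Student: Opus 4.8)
The plan is to translate strong transitivity into a purely lattice-theoretic condition on $A$ and then read that condition off the characteristic polynomial. Since $\Det A \neq 0$, the endomorphism $\phi_A$ is a surjective, open endomorphism with finite kernel, so by \reflemma{equivalent1} together with the argument in \refprop{exactendo}, $\phi_A$ is strongly transitive if and only if the subgroup $\Delta = \bigcup_n \ker \phi_A^n$ is dense in $\mathbb{T}^n$. Writing $\mathbb{T}^n = \mathbb{R}^n/\mathbb{Z}^n$ with $\phi_A$ acting as $x \mapsto Ax$, one has $\ker \phi_A^n = A^{-n}\mathbb{Z}^n/\mathbb{Z}^n$. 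By Pontryagin duality a subgroup of $\mathbb{T}^n$ is dense precisely when its annihilator in $\widehat{\mathbb{T}^n} = \mathbb{Z}^n$ is trivial, and a direct computation gives $(\ker \phi_A^n)^\perp = (A^T)^n\mathbb{Z}^n$. Hence $\Delta^\perp = \bigcap_n (A^T)^n\mathbb{Z}^n$, so strong transitivity is equivalent to $\bigcap_n (A^T)^n\mathbb{Z}^n = \{0\}$. As $f_{A^T} = f_A$, it suffices to prove, for any non-singular $B \in M_n(\mathbb{Z})$, that $\bigcap_n B^n\mathbb{Z}^n = \{0\}$ if and only if no unimodular polynomial divides $f_B$.

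For one direction, set $L = \bigcap_n B^n\mathbb{Z}^n$. This is a $B$-invariant subgroup of $\mathbb{Z}^n$, and since each $v \in L$ lies in $B^{n+1}\mathbb{Z}^n$ we get $B^{-1}v \in B^n\mathbb{Z}^n$ for every $n$, so $B^{-1}v \in L$; thus $B$ restricts to a surjection, hence (being injective) an automorphism of the free abelian group $L$. If $L \neq \{0\}$, then $B|_L \in \operatorname{GL}_r(\mathbb{Z})$ has determinant $\pm 1$, so the characteristic polynomial of $B$ on $L \otimes \mathbb{Q}$ is monic with constant term $\pm 1$, i.e.\ unimodular, and it divides $f_B$ because $L \otimes \mathbb{Q}$ is a $B$-invariant subspace.

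The harder direction, which I expect to be the main obstacle, is to produce a nonzero element of $L$ out of a unimodular factor of $f_B$. Since in any monic factorisation of a unimodular polynomial the constant terms are integers whose product is $\pm 1$, each is $\pm 1$, so every irreducible factor is again unimodular and I may assume the dividing unimodular polynomial $p$ is irreducible. I would then pass to the rational invariant subspace $W = \ker p(B) \subseteq \mathbb{Q}^n$, which is nonzero because $p$, being an irreducible divisor of $f_B$, also divides the minimal polynomial of $B$. Form the full sublattice $M = W \cap \mathbb{Z}^n$, which is $B$-invariant. The crux is an integrality computation: the minimal polynomial of $B|_W$ is $p$, so its characteristic polynomial is a power $p^m$, whence $\det(B|_W) = \pm 1$; therefore $[M : B(M)] = |\det(B|_M)| = 1$, and $B|_M$ is an automorphism of $M$. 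Consequently $M = B^n(M) \subseteq B^n\mathbb{Z}^n$ for all $n$, giving $\{0\} \neq M \subseteq L$.

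Combining the two directions shows $\bigcap_n B^n\mathbb{Z}^n = \{0\}$ exactly when no unimodular polynomial divides $f_B$; applying this to $B = A^T$ yields the stated criterion. The delicate points to watch are the integrality step, where one uses that an injective integral endomorphism of determinant $\pm 1$ is automatically onto, and the duality bookkeeping identifying $(\ker \phi_A^n)^\perp$ with $(A^T)^n\mathbb{Z}^n$.
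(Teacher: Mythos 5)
The paper does not prove this statement at all: Theorem~\ref{Krz} is imported verbatim from Krzyzewski's paper \cite{Kr} (whose result is phrased for exactness, which the paper identifies with strong transitivity via Proposition~\ref{exactendo}), so there is no in-paper argument to compare yours against. Your self-contained proof is correct. The reduction of strong transitivity to density of $\Delta=\bigcup_n\ker\phi_A^n$ is legitimate: Lemma~\ref{equivalent1}(iii) applied at the identity gives necessity, and the identity $\bigcup_n\phi_A^{-n}\left(\phi_A^n(x)\right)=x\Delta$ from the proof of Proposition~\ref{exactendo} gives sufficiency via condition (ii) of that lemma. The duality bookkeeping $\left(\ker\phi_A^n\right)^{\perp}=(A^t)^n\mathbb Z^n$ is right, and both halves of the lattice-theoretic equivalence $\bigcap_n B^n\mathbb Z^n=\{0\}\Leftrightarrow$ no unimodular divisor of $f_B$ are sound: in the forward direction $B$ restricts to an automorphism of the free abelian group $L=\bigcap_n B^n\mathbb Z^n$, forcing a unimodular characteristic factor when $L\neq 0$; in the converse the key integrality step, that the characteristic polynomial of $B$ on $W=\ker p(B)$ is a power of the irreducible unimodular $p$ and hence has determinant $\pm 1$, correctly yields $B(W\cap\mathbb Z^n)=W\cap\mathbb Z^n$ and thus a nonzero element of $L$. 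This is essentially the standard argument behind Krzyzewski's theorem, so your proof fills in exactly what the paper delegates to the reference.
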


\begin{lemma}\label{conjugacy} Let $\phi : \mathbb T^n \to \mathbb
  T^n$ be a surjective affine endomorphism and $A \in M_n(\mathbb Z)$
  the matrix of its linear part. Assume
  that $1$ is not an eigenvalue of $A$. There is
  then a translation $\tau$ on $\mathbb T^n$ such
  that $\tau\phi \tau^{-1} = \phi_A$.
\begin{proof} Let $\lambda \in
  \mathbb R^n$ be a vector such that $\mathbb R^n \ni x \mapsto Ax +
  \lambda$ is a lift of $\phi$, i.e. $\phi(p(x)) = p\left(Ax +
    \lambda\right)$ where $p : \mathbb R^n \to \mathbb T^n$ is the
  canonical surjection. Since $A-1$ is surjective on $\mathbb R^n$ by
  assumption there is a vector
  $\mu \in \mathbb R^n$ such that 
  $\lambda = (A-1)\mu$. Define $\tau$ such that $\tau t = p(\mu)t$ and note that
  $\tau\phi \tau^{-1} = \phi_A$.
\end{proof}
 \end{lemma}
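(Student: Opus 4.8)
The plan is to pass to lifts on the universal cover $\mathbb{R}^n$ and thereby reduce the desired conjugacy to a single linear equation. Since $\phi$ is affine with linear part $\phi_A$, it is covered by an affine map of the form $x \mapsto Ax + \lambda$ for some $\lambda \in \mathbb{R}^n$; that is, $\phi(p(x)) = p(Ax + \lambda)$, where $p : \mathbb{R}^n \to \mathbb{T}^n$ is the canonical surjection. A translation $\tau$ on $\mathbb{T}^n$ is likewise covered by $x \mapsto x + \mu$ for a suitable $\mu \in \mathbb{R}^n$, with $\tau^{-1}$ covered by $x \mapsto x - \mu$.

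First I would compute a lift of the conjugate $\tau \phi \tau^{-1}$ by composing these three lifts. This gives $x \mapsto A(x - \mu) + \lambda + \mu = Ax + \bigl(\lambda - (A - 1)\mu\bigr)$. Hence $\tau \phi \tau^{-1} = \phi_A$ will hold precisely when the translation part of this lift vanishes, i.e. when $\mu$ can be chosen so that $(A - 1)\mu = \lambda$.

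The remaining step is to solve this equation, and this is exactly where the hypothesis is used. Because $1$ is not an eigenvalue of $A$, we have $\Det(A - 1) \neq 0$, so $A - 1$ is invertible, in particular surjective, on $\mathbb{R}^n$; thus $\mu = (A - 1)^{-1}\lambda$ solves $(A-1)\mu = \lambda$. Setting $\tau t = p(\mu) t$ then yields $\tau \phi \tau^{-1} = \phi_A$, as desired.

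There is no genuine obstacle in this argument; the only point deserving care is the bookkeeping between $\mathbb{T}^n$ and its cover. One must check that an exact solution of $(A-1)\mu = \lambda$ in $\mathbb{R}^n$ forces the translation parts to agree in $\mathbb{T}^n$, which holds because we have arranged the lifted translation to vanish on the nose rather than merely modulo $\mathbb{Z}^n$. It is worth noting that a solution modulo $\mathbb{Z}^n$ would already suffice, so what is really needed is only the surjectivity of $A - 1$ on $\mathbb{R}^n$, equivalently $\Det(A-1) \neq 0$, and not invertibility over $\mathbb{Z}$.
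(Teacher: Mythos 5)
Your proof is correct and follows essentially the same route as the paper: lift $\phi$ to $x \mapsto Ax + \lambda$ on $\mathbb{R}^n$, use that $1$ is not an eigenvalue of $A$ to solve $(A-1)\mu = \lambda$, and let $\tau$ be translation by $p(\mu)$. The only difference is that you write out the composition of lifts explicitly, which the paper leaves to the reader.
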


\begin{thm}\label{OK?2} Let $A \in M_n(\mathbb Z)$ be an integral
  matrix with non-zero determinant and let $f_{A}(x) = \Det (x1 - A)$ be the
  characteristic polynomial of $A$. Write
$$
f_A(x) = (1-x)^kg(x)
$$
where $k \in \{0,1,2,\dots, n\}$ is the algebraic multiplicity of $1$
as a root of $f_A$.
\begin{enumerate}
\item[1)] If no unimodular polynomial divides $f_{A}$ every
  affine local homeomorphism of $\mathbb T^n$ with $\phi_A$ as linear
  part is exact and conjugate to $\phi_A$.

\smallskip

\item[2)] If $k \geq 1$ but no unimodular
  polynomial divides $g$, let $S$ be the set of elements $\mu \in
  \mathbb T^n$ with the property that the closed subgroup of $\mathbb
  T^n$ generated by $\mu$ and 
\begin{equation}\label{diffset}
\left\{ x^{-1}\phi_A(x) : \ x \in \mathbb T^n \right\}
\end{equation}
is all of $\mathbb T^n$. Then $S$ is a dense proper subset of $\mathbb
  T^n$ such that an affine map
\begin{equation}\label{affine13}
Tx = \lambda\phi_A(x)
\end{equation}
is strongly transitive if and only if $\lambda \in S$. In
this case no affine local homeomorphism with $\phi_A$ as linear part
is exact.

\smallskip

\item[3)] If there is a unimodular polynomial which divides $g$, no affine 
  local homeomorphism of $\mathbb T^n$ with $\phi_A$ as linear part is
  strongly transitive.
\end{enumerate}
\end{thm}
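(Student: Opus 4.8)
The three cases rest on two standing facts about any continuous surjection: strong transitivity and exactness are invariant under topological conjugacy and descend to topological factors. Concretely, if $\pi$ is an open continuous surjection with $\pi\circ\psi=\bar\psi\circ\pi$, then $\bigcup_{i=0}^{N}\psi^{i}(\pi^{-1}(W))=X$ forces $\bigcup_{i=0}^{N}\bar\psi^{i}(W)=Y$, and the one–step version gives the same statement for exactness. Granting this, Case 1) is immediate. Since $x-1$ is unimodular, the hypothesis forces $k=0$, so $1$ is not an eigenvalue of $A$, and Lemma \ref{conjugacy} conjugates every affine map $T$ with linear part $\phi_A$ to $\phi_A$ itself. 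By Theorem \ref{Krz}, $\phi_A$ is then strongly transitive, hence (by Proposition \ref{exactendo}, as $\ker\phi_A$ is finite) exact; both properties transfer back to $T$ by conjugacy invariance.

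For Case 3) the plan is to exhibit a factor on which the induced affine map is conjugate to a non–strongly–transitive endomorphism. Let $p$ be a monic irreducible factor of the given unimodular divisor of $g$; the monic irreducible factors of a unimodular polynomial are again unimodular (by Gauss's lemma their integer constant terms multiply to $\pm1$), so $p$ is unimodular, and $p(1)\neq 0$ because $g(1)\neq0$. Writing $A'$ for the transpose, the generalized eigenspace $\ker\!\big(p(A')^{n}\big)\subseteq\mathbb{Q}^{n}$ is $A'$–invariant and meets $\mathbb{Z}^{n}$ in a primitive sublattice $\Lambda$ on which $A'$ acts with characteristic polynomial a power of $p$. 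Its annihilator $L\subseteq\mathbb{T}^{n}$ is a $\phi_A$–invariant subtorus, so $T$ descends to an affine map $\bar T$ on $\mathbb{T}^{n}/L$ whose linear part has unimodular characteristic polynomial with $1$ not a root. By Lemma \ref{conjugacy}, $\bar T$ is conjugate to the corresponding endomorphism, which by Theorem \ref{Krz} is not strongly transitive; descent of strong transitivity then shows that no affine $T$ with linear part $\phi_A$ is strongly transitive.

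In Case 2) the relevant factor is the rotation base. The difference set \eqref{diffset} is the image of the endomorphism $x\mapsto x^{-1}\phi_A(x)$, a subtorus $D$, and one checks that $\phi_A$ induces the identity on $\mathbb{T}^{n}/D$; hence $T$ descends to the rotation $R_{\bar\lambda}$ by the image $\bar\lambda$ of $\lambda$, and $\lambda\in S$ holds exactly when $\bar\lambda$ generates a dense subgroup, i.e. when $R_{\bar\lambda}$ is minimal. Since $k\geq1$ the torus $\mathbb{T}^{n}/D$ is positive–dimensional, so the dense–orbit $\bar\lambda$ form a dense proper subset; pulling back gives that $S$ is dense and proper. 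Descent of strong transitivity to $R_{\bar\lambda}$ shows $\lambda\in S$ is necessary, and since a rotation on a positive–dimensional torus is never exact (an iterate of a proper open set is a proper translate of it), descent of exactness shows that no such $T$ is exact.

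It remains to show $\lambda\in S$ suffices for strong transitivity, which I would verify through Lemma \ref{equivalent1}(ii). The identity $T^{-m}(T^{n}(x))=T^{n-m}(x)+\ker\phi_{A^{m}}$ shows that $\bigcup_{m,n}T^{-m}(T^{n}(x))$ has closure equal to the forward $T$–orbit of $x$ translated by the subtorus $\Delta=\overline{\bigcup_{m}\ker\phi_{A^{m}}}$; thus it suffices that the orbit be dense in $\mathbb{T}^{n}/\Delta$. Here $\Delta$ is dual to $\bigcap_{n}(A')^{n}\mathbb{Z}^{n}$, and $\mathbb{T}^{n}/\Delta$ is the maximal quotient torus on which the induced integer matrix has all eigenvalues of modulus $\leq1$; integrality forces the determinant there to be a unit, hence all these eigenvalues lie on the unit circle, and the absence of a unimodular factor of $g$ forces every such eigenvalue to equal $1$. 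The induced map is therefore a unipotent affine homeomorphism, and by the classical minimality criterion for unipotent (skew–product) affine transformations its orbits are dense precisely when the rotation on the base $(\mathbb{T}^{n}/\Delta)/\im(\bar A-1)\cong\mathbb{T}^{n}/D$ is minimal, i.e. precisely when $\lambda\in S$. The main obstacle is exactly this last reduction: identifying $\Delta$ and the unipotent quotient requires tracking the integral (not merely rational) structure of the generalized eigenspaces—primitivity of sublattices, the eigenvalue–one Jordan blocks, and the passage to quotient tori—together with importing the nontrivial classical fact that a unipotent affine map is minimal as soon as its rotation factor is.
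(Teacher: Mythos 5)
Your proof is correct, but the sufficiency direction in case 2) follows a genuinely different route from the paper's. The paper invokes Theorem III.12 of Newman to put $A$ in integral block upper-triangular form separating a block $B_{11}$ with characteristic polynomial $g$ from a unipotent block $B_{22}$, lifts everything to $\mathbb R^n$, and proves strong transitivity by an explicit covering argument that combines exactness of $\phi_{B_{11}}$ (giving \eqref{jeq7}) with minimality of the affine homeomorphism built from $B_{22}$ via Hoare--Parry (giving \eqref{jeq8}). You instead quotient by $\Delta=\overline{\bigcup_m \ker\phi_{A^m}}$, observe that $\bigcup_{n\geq m}T^{-m}(T^n(x))$ has closure equal to the full preimage of the forward $\bar T$-orbit closure in $\mathbb T^n/\Delta$, and reduce condition ii) of Lemma \ref{equivalent1} entirely to minimality of the induced affine homeomorphism there, again settled by Hoare--Parry; this buys a cleaner conceptual statement (strong transitivity of $T$ is exactly minimality of its maximal invertible affine factor) at the cost of some integral bookkeeping, whereas the paper's version is more computational but self-contained over $\mathbb R^n$. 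Your case 3) is likewise a variant of the paper's: you isolate the quotient torus dual to the generalized eigenlattice of an irreducible unimodular factor of $g$, where the paper projects onto the whole $A_{22}$-block from Newman's normal form; both then apply Lemma \ref{conjugacy}, Theorem \ref{Krz} and descent of strong transitivity to factors. One phrase you should repair: the annihilator $\bigcap_m (A^t)^m\mathbb Z^n$ of $\Delta$ is the maximal sublattice on which $A^t$ acts with determinant $\pm1$, i.e.\ with \emph{unimodular} characteristic polynomial --- this does not by itself put the eigenvalues on the unit circle (e.g.\ $x^2-3x+1$); what forces them all to equal $1$ is, as you say in the second half of that sentence, that every unimodular divisor of $f_A$ must be $\pm(x-1)^j$ once no unimodular polynomial divides $g$, so state the argument that way and drop the modulus claim.
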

\begin{proof} 1) In this case $1$ is not an eigenvalue of $A$ and
  hence every affine local homeomorphism with $\phi_A$ as linear part
  is conjugate to $\phi_A$ by Lemma \ref{conjugacy}. It follows from
  Theorem \ref{Krz} and Proposition
  \ref{exactendo} that $\phi_A$ is exact.

2) Note that an affine map is exact if and only if its linear part
is. In the present case it follows from Theorem \ref{Krz} and
Proposition \ref{exactendo} that $\phi_A$ is not
exact; hence no affine map with $\phi_A$ as linear part is exact. This
justifies the last assertion in 2) and shows that the set $S$ is
proper since it does not contain the neutral element. It is dense
because it contains all topological group generators of $\mathbb T^n$. 
To prove the remaining assertions in 2), consider first the
affine map (\ref{affine13}) and assume that it is strongly
transitive. Let $H$ be the closed subgroup of $\mathbb T^n$ generated
by $\lambda$ and (\ref{diffset}). It is easy to check that $T^{-1}(H)
\subseteq H$ and then condition iii) of Lemma \ref{equivalent1} shows
that $H = \mathbb T^n$, i.e. $\lambda \in S$. Conversely, assume that
$\lambda \in S$. We use now Theorem III.12. on page 50 of \cite{N} to
get $W \in Gl_n(\mathbb Z)$ and matrices $B_{11} \in M_{n-k}(\mathbb
Z)$, $B_{22}\in M_{k}(\mathbb Z)$ and a $(n -k)
\times k$ integral matrix $B_{12}$ such that 
\begin{equation}\label{matrixeq2}
W AW^{-1} = \left( \begin{matrix} B_{11} & B_{12} \\ 0 & B_{22} \end{matrix} \right)
\end{equation}
and such that $B_{22}$ is an upper triangular
matrix of the form
$$
B_{22} = \left( \begin{matrix} 1 & b_{12} & b_{13} & \dots & b_{1k} \\
    0 & 1 & b_{23} & \dots & b_{2k} \\
0 & 0 & 1 & \dots & b_{3k} \\
\vdots & \vdots & \vdots & \ddots & \vdots \\ 0 & 0 & 0 & \hdots & 1
\end{matrix} \right)
$$
and $g$ is the characteristic polynomial of $B_{11}$. By
exchanging $\lambda$ with $\phi_W(\lambda)$ and $A$ with $WAW^{-1}$ we may assume that
$A$ is the matrix on the right-hand side of (\ref{matrixeq2}). 
Let $V$ and $U$ be
open non-empty subsets of $\mathbb R^{n-k} $ and $\mathbb R^{k}$,
respectively. It follows from Theorem \ref{Krz} and Proposition \ref{exactendo} that $\phi_{B_{11}}$ is exact which
implies that there is a $N \in \mathbb N$ such that
\begin{equation}\label{jeq7}
B_{11}^{j}V + \mathbb Z^{n-k} = \mathbb R^{n-k} 
\end{equation}
for all $j \geq N$. Let
$\alpha \in \mathbb R^{n-k}, \beta \in \mathbb R^k$ be vectors such
that $p((\alpha,\beta)) = \lambda$ where $p : \mathbb R^n \to \mathbb
T^n$ is the canonical surjection. Let $\rho \in \mathbb T^k$ be the
image of $\beta$ and note that the affine map
$z \mapsto \rho \phi_{B_{22}}(z)$
is an affine homeomorphism of $\mathbb T^k$ which, thanks to the block
diagonal form (\ref{matrixeq2}) is a factor of
$T$. Since $\lambda \in S$ it follows that $\rho$ and the set
$\left\{ z^{-1} \phi_{B_{22}}(z) : \ z \in \mathbb T^k \right\}$
generate $\mathbb T^k$ as a topological group. It follows then
  from Theorem 4 in \cite{HP} that
  $x \mapsto \rho\phi_{B_{22}}(x)$ is a minimal homeomorphism of
  $\mathbb T^k$. There is therefore an $M \in \mathbb
N$ such that
\begin{equation}\label{jeq8}
\bigcup_{j=N}^{N+M} \left(B_{22}^{j}U + \sum_{l=0}^{j-1} B_{22}^{l}\beta\right) + \mathbb Z^k = \mathbb R^k .
\end{equation}
Let $L : \mathbb R^n \to \mathbb R^n$ be the affine map $Lx = Ax +
(\alpha, \beta)$. Thanks to the block form of $A$ it follows from (\ref{jeq7}) and (\ref{jeq8}) that
$$
\bigcup_{j = N}^{N+M} L^j( V \times U)  + \mathbb Z^n = \mathbb R^n,
$$
proving that $x \mapsto \lambda \phi_A(x)$ is strongly transitive.

3) It follows
from Theorem III.12 on page 50 of \cite{N} that there are $W_1
\in Gl_n(\mathbb Z)$, matrices $A_{11} \in M_k(\mathbb Z)$, $A_{22}\in M_{n-k}(\mathbb Z)$ and a $k
\times (n-k)$ integral matrix $A_{12}$ such that
\begin{equation}\label{matrixeq}
W_1 AW_1^{-1} = \left( \begin{matrix} A_{11} & A_{12} \\ 0 & A_{22} \end{matrix} \right)
\end{equation}
and the characteristic polynomials of $A_{11}$ and $A_{22}$ are
$(1-x)^k$ and $g$, respectively. We may therefore assume that $A$ is
the matrix on the right-hand side of (\ref{matrixeq}). Let $T$ be an
affine local homeomorphism with $\phi_A$ as linear
part. The projection $q : \mathbb T^n \to \mathbb T^{n-k}$ to the last
coordinates gives then a factor map to an affine local homeomorphism
$S : \mathbb T^{n-k} \to \mathbb T^{n-k}$ whose linear part in
$\phi_{A_{22}}$. Since $1$ is not an eigenvalue of $A_{22}$ it follows
from Lemma \ref{conjugacy} that $S$ is conjugate to $\phi_{A_{22}}$
which by Theorem \ref{Krz} is not strongly transitive. Since strong
transitivity is inherited by factors it follows that $T$ is not
strongly transitive either.
\end{proof}

\subsection{Local homeomorphisms of the circle}

With this section we want to point out that for the torus of lowest dimension
the group endomorphisms comprise all strongly transitive non-invertible local homeomorphisms, up
to conjugacy.

Let $\mathbb T$ be the unit circle in the complex plane. For any
continuous map $\phi : \mathbb T  \to \mathbb T$ there is a unique
continuous map $g : [0,1] \to \mathbb R$ such that $g(0) \in
[0,1[$ and $\phi\left(e^{2 \pi it} \right) = e^{2 \pi i g(t)}$ for
all $t \in [0,1]$. The value $g(1) - g(0)  \in \mathbb Z$ is the
\emph{degree} of $\phi$ and we denote it by $d_{\phi}$. For maps of positive degree the following can be deduced from the work
of Boyland in \cite{B}. 

\begin{thm}\label{boyland} Let $\phi : \mathbb T  \to \mathbb T$ be a local
  homeomorphism such $\left|d_{\phi}\right| \geq 2$. Assume that
  $\phi$ is strongly transitive. Then $\phi$ is
  conjugate to the endomorphism $z \mapsto z^{d_{\phi}}$.
\end{thm}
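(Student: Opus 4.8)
The plan is to present $\phi$ as a covering map of the circle, import from Boyland a monotone degree-one semiconjugacy onto the model map $z \mapsto z^{d_\phi}$, and then use strong transitivity to upgrade that semiconjugacy to a conjugacy.

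First I would note that a local homeomorphism of the compact space $\mathbb{T}$ is automatically a covering map, so that any lift $F : \mathbb{R} \to \mathbb{R}$, normalised by $F(x+1) = F(x) + d_\phi$, is a strictly monotone homeomorphism of $\mathbb{R}$. With $\left|d_\phi\right| \geq 2$ this is exactly the situation handled by Boyland: for a circle map of degree $d_\phi$ with monotone lift there is a continuous, degree-one, monotone surjection $\bar h : \mathbb{T} \to \mathbb{T}$ satisfying $\bar h \circ \phi = E_{d_\phi} \circ \bar h$, where $E_d(z) = z^d$. This is the ingredient I would take from \cite{B}; for $d_\phi < 0$ the construction is formally the same, since then both the lift $F$ and the linear model $x \mapsto d_\phi x$ are orientation-reversing.

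The heart of the argument is to show that strong transitivity forces $\bar h$ to be injective. The only obstruction to injectivity of a monotone degree-one surjection is a nondegenerate \emph{plateau}: a closed arc $I$ with $I^\circ \neq \emptyset$ on which $\bar h$ is constant. Suppose such an $I$ exists. Iterating the semiconjugacy relation gives $\bar h \circ \phi^i = E_{d_\phi^{\,i}} \circ \bar h$, so $\bar h$ is constant on each $\phi^i(I^\circ)$; and since a local homeomorphism is an open map, every $\phi^i(I^\circ)$ is a nonempty open set. An open set on which the monotone map $\bar h$ is constant is contained in the union $P$ of all plateaus, so $\bigcup_{i=0}^N \phi^i(I^\circ) \subseteq P$ for every $N$. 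Now distinct maximal plateaus have distinct one-point images under $\bar h$ by monotonicity, and a family of pairwise disjoint nondegenerate arcs of $\mathbb{T}$ is countable; hence $\bar h(P)$ is countable and $P \neq \mathbb{T}$. This contradicts strong transitivity applied to the nonempty open set $I^\circ$, which demands $\bigcup_{i=0}^N \phi^i(I^\circ) = \mathbb{T}$ for some $N$. Hence $\bar h$ has no nondegenerate plateau.

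Being a continuous, monotone, plateau-free surjection, $\bar h$ is then a bijection, and a continuous bijection of the compact Hausdorff space $\mathbb{T}$ is a homeomorphism; the semiconjugacy relation becomes the conjugacy $\bar h \circ \phi \circ \bar h^{-1} = E_{d_\phi}$, proving that $\phi$ is conjugate to $z \mapsto z^{d_\phi}$. I expect the genuinely delicate ingredient to be the existence of the monotone semiconjugacy $\bar h$, which is why I would lean on \cite{B} rather than build it by hand; within the argument proper the step needing the most care is the forward-invariance and countability of the plateaus, since this is precisely what lets strong transitivity produce the contradiction.
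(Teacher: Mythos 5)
Your proof is correct and follows essentially the same route as the paper: a monotone degree-one semiconjugacy onto $z \mapsto z^{d_{\phi}}$, which strong transitivity then upgrades to a conjugacy by ruling out intervals of constancy. The only differences are cosmetic: the paper builds the semiconjugacy itself via the contraction $f \mapsto d_{\phi}^{-1} f \circ g$ on the complete metric space of non-decreasing degree-one lifts (so both signs of $d_{\phi}$ are handled uniformly rather than by appeal to \cite{B}), and it derives the contradiction from connectedness of $\mathbb{T}$ (the semiconjugacy would be locally constant, hence constant, contradicting surjectivity) in place of your countability argument on plateaus.
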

\begin{proof}  The proof is essentially the same which is standard for
  expanding maps. Let $p : \mathbb R \to \mathbb T$ be the covering $p(t)
  = e^{2 \pi i t}$ and choose a continuous function $g : \mathbb R \to
  \mathbb R$ such that $\phi \circ p = p \circ g$. Note that $g(x+1) =
  g(x) + d_{\phi}$ and that $g$ is strictly increasing when $d_{\phi}
  \geq 2$ and strictly decreasing when $d_{\phi} \leq -2$. Let $M$ denote the
  set of non-decreasing continuous functions $f : \mathbb R \to \mathbb R$ with the
  property that $f(t+1) = f(t)+1$ for all $t$. Then $M$ is a complete
  metric space in the metric $D$ defined such that
$$
D(f_1,f_2) =  \sup_{t \in \mathbb R}
\left|f_1(t) - f_2(t)\right| .
$$
Define $T_g : M \to M$ such that
$$
T_g(f) = d_{\phi}^{-1} f \circ g .
$$
Then $D\left(T_g(f_1),T_g(f_2)\right) \leq \frac{1}{\left|d_{\phi}\right|}
D(f_1,f_2) \leq 2^{-1}D(f_1,f_2)$, and there is therefore a function
$h \in M$ such that $T_g(h) =h$. Define $\alpha : \mathbb T \to \mathbb
T$ such that $\alpha \circ p = p \circ h$ and observe that $\alpha
\circ \phi = \beta_{d_{\phi}} \circ \alpha$, where
$\beta_{d_{\phi}}(z) = z^{d_{\phi}}$. We claim that $h$ is strictly
increasing. Indeed, if not there is a non-empty open interval in $\mathbb R$ on
which $h$ is constant, and hence also a non-empty open interval $I \subseteq
\mathbb T$ on which $\alpha$ is constant. Since $\phi$ is strongly
transitive there is an $N \in \mathbb N$ such that $\mathbb T =
\bigcup_{j=0}^N \phi^j(I)$. Since $\alpha
\circ \phi^j = \beta_{d_{\phi}}^j \circ \alpha$ for all $j \in \mathbb
N$ it follows that $\alpha$ is constant on $\phi^j(I)$ for all $j$,
whence $\alpha$ is constant  because $\mathbb T$ is
connected. This is impossible since $h \in M$ and hence $h$ is strictly
increasing as claimed. It follows that $\alpha$ is a conjugacy.
\end{proof}

\section{K-theory calculations}
It follows from Lemma
\ref{KKlem} that the $K$-theory groups of
$C^*_r\left(\Gamma_T\right)$ are unchanged when we replace $T$ by its
linear part. We focus therefore in this section on the
calculation of the $K$-groups of $C^*_r\left(\Gamma_{\phi}\right)$
when $\phi$ is a group endomorphism of $\mathbb T^n$. The main tool
will be the six-terms exact sequence from Theorem 3.7 in
\cite{EHR}. Note that it applies to our case since
$C^*_r\left(\Gamma_{\phi}\right)$ is an Exel system in the sense of
\cite{EHR} by Theorem 9.1 of \cite{EV}, and that it is the same
as the Deaconu-Muhly six-terms
exact sequence considered in Section \ref{deamuh}.

Let $\phi_A : \mathbb T^n  \to \mathbb T^n$ be a group endomorphism
given by the integral non-singular matrix $A$, i.e. $\phi_A$ is
defined by (\ref{endoform}). Set $D = \left|\Det A\right|$
and let ${\phi_A}_* : K_*\left(C\left(\mathbb T^n\right)\right) \to
K_*\left(C\left(\mathbb T^n\right)\right), * = 0,1$, be the
homomorphism induced by the endomorphism of $C\left(\mathbb
  T^n\right)$ sending $f$ to $f \circ \phi_A$ and let $\iota :
C(\mathbb T^n) \to C^*_r\left(\Gamma_{\phi_A}\right)$ be the canonical
embedding. It follows from (\ref{Nid}) that the six terms exact sequence from Theorem 3.7 of
\cite{EHR} or Theorem \ref{6terms} takes the form
\begin{equation}\label{eq7}
\begin{xymatrix}{
 K_0\left(C\left(\mathbb T^n\right)\right) \ar[rr]^-{\id -
   D{{\phi_A}_0}^{-1}} & &
 K_0\left(C\left(\mathbb T^n\right)\right) \ar[rr]^-{\iota_*} & &
 K_0\left(C^*_r\left(\Gamma_{\phi_A}\right)\right) \ar[d] \\
K_1\left(C^*_r\left(\Gamma_{\phi_A}\right)\right) \ar[u] & &
K_1\left(C\left(\mathbb T^n\right)\right) \ar[ll]^-{\iota_*} & &
K_1\left(C\left(\mathbb T^n\right)\right)
\ar[ll]^-{\id - D{{\phi_A}_1}^{-1}} 
}\end{xymatrix}
\end{equation}
Consequently
\begin{equation}\label{K0X}
K_0\left(C^*_r\left(\Gamma_{\phi_A}\right)\right) \simeq \coker \left( \id -
   D{{\phi_A}_0}^{-1}\right) \oplus \ker \left( \id -
   D{{\phi_A}_1}^{-1}\right)
\end{equation}
and
\begin{equation}\label{K1X}
K_1\left(C^*_r\left(\Gamma_{\phi_A}\right)\right) \simeq \coker \left( \id -
   D{{\phi_A}_1}^{-1}\right) \oplus \ker \left( \id -
   D{{\phi_A}_0}^{-1}\right) .
\end{equation}
For classification purposes it is important to keep track of the distinguished element of
$K_0\left(C^*_r\left(\Gamma_{\phi_A}\right)\right)$ represented by the
unit in $C^*_r\left(\Gamma_{\phi_A}\right)$. This is always quite easy
because the unit of $C^*_r\left(\Gamma_{\phi_A}\right)$ is the image
of the unit in $C(\mathbb T^n)$ under the embedding $\iota$. In
particular, the unit represents always an element in the direct
summand $\coker \left( \id -
   D{{\phi_A}_0}^{-1}\right)$.

\subsection{The circle}\label{circle} As pointed out in \cite{EHR} the calculation of the
K-theory groups of the $C^*$-algebra of an endomorphism of the circle of
positive degree has
been carried out by several mathematicians, and \cite{EHR} contains
the calculation for endomorphisms of negative degree. We just record the
result. 

Let $a \in \mathbb Z\backslash \{0\}$ and define $\phi_a : \mathbb T \to \mathbb T$
such that $\phi_a(t) = t^a$. Then
\begin{enumerate}
\item[1)] $K_0\left(C^*_r\left(\Gamma_{\phi_a} \right)\right) =
  \mathbb Z_{a-1} \oplus \mathbb Z$ and
  $K_1\left(C^*_r\left(\Gamma_{\phi_a} \right)\right) = \mathbb Z$
  when $a \geq 2$,
\item[2)] $K_0\left(C^*_r\left(\Gamma_{\phi_1} \right)\right) =
  \mathbb Z^2 = K_1\left(C^*_r\left(\Gamma_{\phi_1} \right)\right)$,
\item[3)] $K_0\left(C^*_r\left(\Gamma_{\phi_{-1}} \right)\right) =
  \mathbb Z$ and
  $K_1\left(C^*_r\left(\Gamma_{\phi_{-1}} \right)\right) = \mathbb Z_2$
    and
\item[4)] $K_0\left(C^*_r\left(\Gamma_{\phi_a} \right)\right) =
  \mathbb Z_{|a|-1}$ and
  $K_1\left(C^*_r\left(\Gamma_{\phi_a} \right)\right) = \mathbb Z_2$
  when $a \leq -2$.
 \end{enumerate}

\subsection{The two-torus}\label{n=2}

Before we specialise to $n =2$, observe that by use of the identifications $K_*\left(C\left(\mathbb T^n\right)\right)
\simeq \mathbb Z^{2^{n-1}}, \ * \in \{0,1\}$, the map $A  \mapsto
{\phi_A}_*$ gives rise to maps
\begin{equation}\label{zz1}
\mu_i : M_n \left( \mathbb Z\right) \to M_{2^{n-1}}(\mathbb Z),
\end{equation}
$i = 0,1$. Since $\phi_{AB} = \phi_A \circ \phi_B$ it follows from the
functoriality of $K$-theory that $\mu_0$ and $\mu_1$ are both anti-multiplicative:
\begin{equation}\label{mult}
\mu_i(AB)^t = \mu_i(A)^t\mu_i(B)^t 
\end{equation} 
where $Y^t$ is the transpose of $Y$.
This observation will be a main tool in our quest for a more
detailed description of the K-theory groups in dimension 2 and
3. Other tools will be the following lemmas.


\begin{lemma}\label{kasperslemma} Let $\mu : M_n(\mathbb Z) \to
  \mathbb Z$ be a map such that 
\begin{enumerate}
\item[i)] $\mu(AB) = \mu(A)\mu(B)$ for all $A,B \in M_n(\mathbb Z)$,
  and
\item[ii)] $\mu(A) =  \Det A$ when $A$ is diagonal. 
\end{enumerate}
It follows that $\mu(A) =  \Det A$ for all $A \in M_n(\mathbb Z)$.
\begin{proof} For any $A \in M_n(\mathbb Z)$ there is a diagonal matrix $D
  \in M_n(\mathbb Z)$ and elements $U,V \in Gl_n(\mathbb Z)$ such that
  $A = UDV$; this is the socallled Smith normal form of $A$. It
  suffices therefore to show that $\mu$ agrees with the determinant
  on $Gl_n(\mathbb Z)$, which under the present assumptions amounts to
  showing that $\mu$ is constant 1 on $Sl_n(\mathbb Z)$. When $n \geq
  3$ this is a trivial consequence of conditions i) and ii) given the known
  fact that $Sl_n(\mathbb Z)$ is the commutator subgroup of
  $Gl_n(\mathbb Z)$. This argument does not suffice when $n =2$
  because $Sl_2(\mathbb Z)$ is larger than the commutator subgroup in
  $Gl_2(\mathbb Z)$. To handle the case $n = 2$ recall that
 the two
  matrices $\left(\begin{smallmatrix} 1 & 1 \\ 0 &
      1\end{smallmatrix}\right)$ and $\left(\begin{smallmatrix} 1 & 0 \\ 1 & 1\end{smallmatrix}\right)$
generate $Sl_2(\mathbb Z)$. Since $\mu$ is multiplicative and takes the
identity matrix to 1 the value of $\mu$ on any of the two generators
must be $1$ or $-1$. The fact that the value is $1$ follows from the identity
$$
\left(\begin{smallmatrix} 2 & 1 \\ 0 & 2\end{smallmatrix}\right)^2 =
\left(\begin{smallmatrix} 2 & 0 \\ 0 &
    2\end{smallmatrix}\right)^2\left(\begin{smallmatrix} 1 & 1 \\ 0 &
    1\end{smallmatrix}\right)
$$
and its transpose.
\end{proof}
\end{lemma}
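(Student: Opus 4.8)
The plan is to reduce the whole statement to the behaviour of $\mu$ on the invertible integer matrices by means of the Smith normal form, which is already invoked in the lemma's setup. First I would write an arbitrary $A \in M_n(\mathbb Z)$ as $A = UDV$ with $U, V \in Gl_n(\mathbb Z)$ and $D$ diagonal. Multiplicativity (i) then gives $\mu(A) = \mu(U)\mu(D)\mu(V)$, whereas $\Det A = \Det U \cdot \Det D \cdot \Det V$, and condition (ii) already identifies $\mu(D)$ with $\Det D$. So the entire problem collapses to showing that $\mu$ and $\Det$ agree on $Gl_n(\mathbb Z)$.

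Next I would note that on $Gl_n(\mathbb Z)$ the map $\mu$ takes only the values $\pm 1$: applying (i) to $UU^{-1} = I$ and using $\mu(I) = \Det I = 1$ from (ii) forces $\mu(U)\mu(U^{-1}) = 1$, hence $\mu(U) \in \{1,-1\}$. Thus $\mu$ restricts to a group homomorphism $Gl_n(\mathbb Z) \to \{\pm 1\}$, and since $\Det$ is another such homomorphism, their ratio is again a homomorphism into $\{\pm 1\}$. As $\mu$ and $\Det$ already coincide on the diagonal matrix $\diag(-1,1,\dots,1)$ by (ii), which represents the nontrivial class of $Gl_n(\mathbb Z)/Sl_n(\mathbb Z)$, everything now reduces to proving that $\mu$ is identically $1$ on $Sl_n(\mathbb Z)$. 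For $n \ge 3$ this is immediate: $Sl_n(\mathbb Z)$ coincides with the commutator subgroup of $Gl_n(\mathbb Z)$, so any homomorphism from $Gl_n(\mathbb Z)$ into the abelian group $\{\pm 1\}$ is automatically trivial on it.

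The main obstacle is the case $n = 2$, where $Sl_2(\mathbb Z)$ has nontrivial abelianization and the commutator argument breaks down. Here I would use that $Sl_2(\mathbb Z)$ is generated by the two elementary matrices $\left(\begin{smallmatrix} 1 & 1 \\ 0 & 1\end{smallmatrix}\right)$ and $\left(\begin{smallmatrix} 1 & 0 \\ 1 & 1\end{smallmatrix}\right)$, each of which has $\mu$-value in $\{\pm 1\}$, so only the signs remain to be determined. To pin them down I would exploit a matrix identity that writes a perfect square in terms of a diagonal matrix and one generator, namely
$$\left(\begin{smallmatrix} 2 & 1 \\ 0 & 2\end{smallmatrix}\right)^2 = \left(\begin{smallmatrix} 2 & 0 \\ 0 & 2\end{smallmatrix}\right)^2\left(\begin{smallmatrix} 1 & 1 \\ 0 & 1\end{smallmatrix}\right).$$
Applying $\mu$, the left-hand side is the nonnegative integer $\mu\!\left(\begin{smallmatrix} 2 & 1 \\ 0 & 2\end{smallmatrix}\right)^2$, while by (ii) the right-hand side equals $16\,\mu\!\left(\begin{smallmatrix} 1 & 1 \\ 0 & 1\end{smallmatrix}\right)$; nonnegativity then forces this generator's value to be $+1$ rather than $-1$. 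The transpose of this identity treats the second generator $\left(\begin{smallmatrix} 1 & 0 \\ 1 & 1\end{smallmatrix}\right)$ in exactly the same manner, and since the transposed relation is itself a valid matrix identity I do not need to assume that $\mu$ is compatible with transposition. This yields $\mu \equiv 1$ on a generating set of $Sl_2(\mathbb Z)$, hence on all of it, completing the reduction and the proof.
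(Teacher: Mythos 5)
Your proposal is correct and follows essentially the same route as the paper: reduce via Smith normal form to $Gl_n(\mathbb Z)$, handle $n\geq 3$ by the commutator-subgroup fact, and settle $n=2$ using the generators of $Sl_2(\mathbb Z)$ together with the identity $\left(\begin{smallmatrix} 2 & 1 \\ 0 & 2\end{smallmatrix}\right)^2 = \left(\begin{smallmatrix} 2 & 0 \\ 0 & 2\end{smallmatrix}\right)^2\left(\begin{smallmatrix} 1 & 1 \\ 0 & 1\end{smallmatrix}\right)$ and its transpose. The extra details you supply (that $\mu$ is $\pm 1$-valued on $Gl_n(\mathbb Z)$ and that positivity of the square forces the sign) are exactly the steps the paper leaves implicit.
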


The same proof yields also the following
\begin{lemma}\label{kasperslemma2} Let $\mu : M_n(\mathbb Z) \to
  \mathbb Z$ be a map such that 
\begin{enumerate}
\item[i)] $\mu(AB) = \mu(A)\mu(B)$ for all $A,B \in M_n(\mathbb Z)$,
  and
\item[ii)] $\mu(A) =  1$ when $A$ is diagonal. 
\end{enumerate}
It follows that $\mu(A) =  1$ for all $A \in M_n(\mathbb Z)$.
\end{lemma}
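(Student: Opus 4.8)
The plan is to follow the proof of \reflemma{kasperslemma} essentially verbatim, replacing the target value $\Det A$ by $1$ throughout and verifying that condition ii) still supplies exactly what is needed at each step. First I would observe that $\mu(I) = 1$, since $I$ is diagonal, and that $\mu(U) \in \{-1,1\}$ for every $U \in Gl_n(\mathbb Z)$: indeed $U\inv \in M_n(\mathbb Z)$ as well, so $\mu(U)\mu(U\inv) = \mu(I) = 1$ forces the integer $\mu(U)$ to be a unit. Writing $A$ in Smith normal form $A = UDV$ with $D$ diagonal and $U,V \in Gl_n(\mathbb Z)$, multiplicativity together with $\mu(D) = 1$ gives $\mu(A) = \mu(U)\mu(V)$, so it suffices to prove that $\mu \equiv 1$ on $Gl_n(\mathbb Z)$.

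Next I would reduce from $Gl_n(\mathbb Z)$ to $Sl_n(\mathbb Z)$. Any $U \in Gl_n(\mathbb Z)$ with $\Det U = -1$ factors as $U = U'R$, where $R = \diag(-1,1,\dots,1)$ and $U' = UR \in Sl_n(\mathbb Z)$; since $R$ is diagonal, condition ii) gives $\mu(R) = 1$, so $\mu(U) = \mu(U')$. This is precisely the place where the hypothesis $\mu = 1$ on diagonal matrices, rather than $\mu = \Det$, changes the outcome relative to \reflemma{kasperslemma}: there the reflection $R$ contributed the sign $\Det R = -1$, while here it contributes $1$. Hence proving $\mu \equiv 1$ on $Sl_n(\mathbb Z)$ will finish the argument.

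For $n \geq 3$ this is immediate: $\mu$ restricted to $Gl_n(\mathbb Z)$ takes values in the abelian group $\{-1,1\}$, hence annihilates all commutators, and $Sl_n(\mathbb Z)$ is the commutator subgroup of $Gl_n(\mathbb Z)$. The main obstacle, exactly as in \reflemma{kasperslemma}, is the case $n = 2$, where $Sl_2(\mathbb Z)$ properly contains the commutator subgroup of $Gl_2(\mathbb Z)$. I would instead use that $Sl_2(\mathbb Z)$ is generated by $\left(\begin{smallmatrix} 1 & 1 \\ 0 & 1 \end{smallmatrix}\right)$ and $\left(\begin{smallmatrix} 1 & 0 \\ 1 & 1 \end{smallmatrix}\right)$, on each of which $\mu$ takes the value $\pm 1$, and pin down the sign by applying $\mu$ to the identity $\left(\begin{smallmatrix} 2 & 1 \\ 0 & 2 \end{smallmatrix}\right)^2 = \left(\begin{smallmatrix} 2 & 0 \\ 0 & 2 \end{smallmatrix}\right)^2\left(\begin{smallmatrix} 1 & 1 \\ 0 & 1 \end{smallmatrix}\right)$. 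The left-hand side is a square, so $\mu$ of it is nonnegative, while the right-hand side equals $\mu\left(\begin{smallmatrix} 1 & 1 \\ 0 & 1 \end{smallmatrix}\right)$ because the diagonal factor has $\mu$-value $1$; thus $\mu\left(\begin{smallmatrix} 1 & 1 \\ 0 & 1 \end{smallmatrix}\right) = 1$, and transposing the identity disposes of the other generator in the same way.
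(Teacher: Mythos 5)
Your proof is correct and follows essentially the same route as the paper, which simply remarks that the proof of Lemma~\ref{kasperslemma} carries over verbatim: Smith normal form, reduction to $Gl_n(\mathbb Z)$ and then to $Sl_n(\mathbb Z)$, the commutator-subgroup argument for $n\geq 3$, and the identity $\left(\begin{smallmatrix} 2 & 1 \\ 0 & 2\end{smallmatrix}\right)^2 = \left(\begin{smallmatrix} 2 & 0 \\ 0 & 2\end{smallmatrix}\right)^2\left(\begin{smallmatrix} 1 & 1 \\ 0 & 1\end{smallmatrix}\right)$ together with its transpose for $n=2$. Your explicit treatment of the reflection $R=\diag(-1,1,\dots,1)$, pinpointing where the hypothesis $\mu=1$ on diagonals (rather than $\mu=\Det$) changes the conclusion, is a welcome clarification of a step the paper leaves implicit.
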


Assume now that $n =2$. From the K\"unneth theorem, cf. \cite{S}, we get isomorphisms
$$
k_0 : \mathbb Z^2 =\left(K_0\left(C\left(\mathbb T\right)\right) \otimes K_0\left(C
  \left( \mathbb T\right) \right)\right) \oplus \left(K_1\left(C\left(\mathbb T\right)\right) \otimes K_1\left(C
  \left( \mathbb T\right) \right)\right) \to K_0\left(C\left(\mathbb
    T^2\right)\right) = \mathbb Z^2
$$
and
$$
k_1 : \mathbb Z^2 =\left( K_1\left(C\left(\mathbb T\right)\right) \otimes K_0\left(C
  \left( \mathbb T\right) \right)\right) \oplus \left(K_0\left(C\left(\mathbb T\right)\right) \otimes K_1\left(C
  \left( \mathbb T\right) \right)\right)  \to K_1\left(C\left(\mathbb
    T^2\right)\right) = \mathbb Z^2.
$$
When we identify $K_*(C(\mathbb T^2))$
with the domain of $k_*$ the naturality of these
isomorphisms implies that
\begin{equation*}\label{diag0}
\mu_0 \left( \begin{smallmatrix} a & 0 \\ 0 & b \end{smallmatrix}
\right) = \left( \begin{smallmatrix} 1 & 0 \\ 0 & ab \end{smallmatrix}
\right)
\end{equation*}
and
\begin{equation*}\label{diag1}
\mu_1 \left( \begin{smallmatrix} a & 0 \\ 0 & b \end{smallmatrix}
\right) = \left( \begin{smallmatrix} a & 0 \\ 0 & b \end{smallmatrix}
\right)
\end{equation*}
for all $a ,b \in \mathbb Z$. Since $\mu_0(A)$ commutes with 
$\mu_0 \left( \begin{smallmatrix} 2 & 0 \\ 0 & 2 \end{smallmatrix}
\right) = \left( \begin{smallmatrix} 1 & 0 \\ 0 & 4 \end{smallmatrix}
\right)$
for all $A \in M_2(\mathbb Z)$, it follows that each $\mu_0(A)$ is
diagonal, i.e. there are multiplicative maps $\delta_i : M_2(\mathbb
Z) \to \mathbb Z, i = 1,2$, such that
$$
\mu_0(A) = \left( \begin{smallmatrix} \delta_1(A) & 0 \\ 0 & \delta_2(A) \end{smallmatrix}
\right) .
$$
It follows then from Lemma \ref{kasperslemma} and Lemma
\ref{kasperslemma2} that
\begin{equation}\label{diag2x}
\mu_0(A) = \left( \begin{smallmatrix} 1 & 0 \\ 0 & \Det A
  \end{smallmatrix} \right) 
\end{equation}
for all $A \in M_2(\mathbb Z)$.

To determine $\mu_1$ we use the following lemma.
\begin{lemma}\label{kasperslemma3} Let $\mu : M_2(\mathbb Z) \to
  M_2(\mathbb Z)$ be a map such that 
\begin{enumerate}
\item[i)] $\mu(AB) = \mu(A)\mu(B)$ for all $A,B \in M_2(\mathbb Z)$,
  and
\item[ii)] $\mu \left( \begin{smallmatrix} a & 0 \\ 0 & b
    \end{smallmatrix} \right) = \left( \begin{smallmatrix} a & 0 \\ 0 & b
    \end{smallmatrix} \right)$ for all $a,b \in \mathbb Z$.
\end{enumerate}
It follows that $\mu(A) =  A$ for all $A \in M_2(\mathbb Z)$ or
$\mu(A) = WAW$ for all $A \in M_2(\mathbb Z)$, where $W = \left(
  \begin{smallmatrix} -1 & 0 \\ 0 & 1 \end{smallmatrix} \right)$ .
\begin{proof} Let $\left( \begin{smallmatrix} a & b \\ c & d
    \end{smallmatrix} \right) \in M_2(\mathbb Z)$ and write
$\mu \left( \begin{smallmatrix} a & b \\ c & d
    \end{smallmatrix} \right) = \left( \begin{smallmatrix} a' & b' \\ c' & d'
    \end{smallmatrix} \right)$.
Since
$\left( \begin{smallmatrix} 1 & 0 \\ 0 & 0
    \end{smallmatrix} \right)\left( \begin{smallmatrix} a & b \\ c & d
    \end{smallmatrix} \right)\left( \begin{smallmatrix} 1 & 0 \\ 0 & 0
    \end{smallmatrix} \right) = \left( \begin{smallmatrix} a & 0 \\ 0 & 0
    \end{smallmatrix} \right)$
it follows from i) and ii) that $a'=a$.
By using $\left( \begin{smallmatrix} 0 & 0 \\ 0 & 1
    \end{smallmatrix} \right)$ in the same way we find also that $b' =
  b$. An application of this conclusion to $\left( \begin{smallmatrix} 0 & 1 \\ 1 & 0
    \end{smallmatrix} \right)$ shows that
$$
\mu \left( \begin{smallmatrix} 0 & 1 \\ 1 & 0
    \end{smallmatrix} \right) = \left( \begin{smallmatrix} 0 & x \\ y & 0
    \end{smallmatrix} \right)
$$
for some $x,y \in \mathbb Z$. Since $\mu \left( \begin{smallmatrix} 0 & 1 \\ 1 & 0
    \end{smallmatrix} \right)^2 = \left( \begin{smallmatrix} 1 & 0 \\ 0 & 1
    \end{smallmatrix} \right)$, we find that $x=y \in \{1,-1\}$. 
After conjugation with $W$, if necessary, we can arrange that $x= y =1$. 
Then
$$
\mu \left( \begin{smallmatrix} 0 & z \\ 0 & 0
    \end{smallmatrix} \right) = \mu \left(\left( \begin{smallmatrix} z & 0
      \\  0 & 0
    \end{smallmatrix} \right) \left( \begin{smallmatrix} 0 & 1 \\ 1  & 0
    \end{smallmatrix} \right)\right) = \left( \begin{smallmatrix} 0 & z \\ 0 & 0
    \end{smallmatrix} \right)
$$
for all $z \in \mathbb Z$. In particular, it follows that
$$
\left( \begin{smallmatrix} 0 & b' \\ 0 & 0 \end{smallmatrix} \right) =
\left( \begin{smallmatrix} 1 & 0 \\ 0 & 0 \end{smallmatrix} \right)
\left( \begin{smallmatrix} a' & b' \\ c' & d' \end{smallmatrix}
\right)\left( \begin{smallmatrix} 0 & 0 \\ 0 & 1 \end{smallmatrix}
\right) = \mu\left(\left( \begin{smallmatrix} 1 & 0 \\ 0 & 0 \end{smallmatrix} \right)
\left( \begin{smallmatrix} a & b \\ c & d \end{smallmatrix}
\right)\left( \begin{smallmatrix} 0 & 0 \\ 0 & 1 \end{smallmatrix}
\right)\right) = \mu\left( \begin{smallmatrix} 0 & b \\ 0 & 0
\end{smallmatrix} \right) = 
\left( \begin{smallmatrix} 0 & b \\ 0 & 0 \end{smallmatrix}
\right),
$$ 
proving that $b' = b$. A similar argument shows that $c' = c$, and
the
proof is complete.
\end{proof}
\end{lemma}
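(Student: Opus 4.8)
The plan is to exploit that, although $\mu$ is assumed only to be multiplicative, the diagonal idempotents $\left(\begin{smallmatrix} 1 & 0 \\ 0 & 0\end{smallmatrix}\right)$ and $\left(\begin{smallmatrix} 0 & 0 \\ 0 & 1\end{smallmatrix}\right)$ are diagonal and therefore fixed by $\mu$; sandwiching a matrix between them isolates its entries one at a time. Writing $\mu\left(\begin{smallmatrix} a & b \\ c & d\end{smallmatrix}\right) = \left(\begin{smallmatrix} a' & b' \\ c' & d'\end{smallmatrix}\right)$, the first step is the identity $\left(\begin{smallmatrix} 1 & 0 \\ 0 & 0\end{smallmatrix}\right)\left(\begin{smallmatrix} a & b \\ c & d\end{smallmatrix}\right)\left(\begin{smallmatrix} 1 & 0 \\ 0 & 0\end{smallmatrix}\right) = \left(\begin{smallmatrix} a & 0 \\ 0 & 0\end{smallmatrix}\right)$: applying $\mu$ and using i) together with ii) forces $a' = a$, and the symmetric computation with $\left(\begin{smallmatrix} 0 & 0 \\ 0 & 1\end{smallmatrix}\right)$ gives $d' = d$. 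Thus $\mu$ already preserves both diagonal entries of an arbitrary matrix.

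Hypothesis ii) says nothing directly about the off-diagonal entries, and this is where both the real work and the dichotomy live. My idea is to pin down $\mu$ on the transposition $P = \left(\begin{smallmatrix} 0 & 1 \\ 1 & 0\end{smallmatrix}\right)$. By the first step its two diagonal entries are fixed at $0$, so $\mu(P) = \left(\begin{smallmatrix} 0 & x \\ y & 0\end{smallmatrix}\right)$ for some integers $x,y$; and since $P^2 = 1$ is diagonal we obtain $\mu(P)^2 = \mu(1) = 1$, which forces $xy = 1$, i.e.\ $x = y = 1$ or $x = y = -1$. This single unforced sign is precisely the origin of the two alternatives in the conclusion.

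To dispose of the sign I would, in the case $x = y = -1$, pass to the map $A \mapsto W\mu(A)W$. Since $W^2 = 1$ this is again multiplicative, still fixes diagonal matrices (as $W$ commutes with them), and sends $P$ to $W\mu(P)W = P$; hence it suffices to treat the normalised case $\mu(P) = P$, in which I claim $\mu = \mathrm{id}$, and the $-1$ case then yields $\mu(A) = WAW$. Under the normalisation, factoring $\left(\begin{smallmatrix} 0 & z \\ 0 & 0\end{smallmatrix}\right) = \left(\begin{smallmatrix} z & 0 \\ 0 & 0\end{smallmatrix}\right)P$ and using that the diagonal factor is fixed shows that $\mu$ fixes every strictly upper triangular matrix, and symmetrically every strictly lower one. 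Finally I extract the remaining entries of a general $A$ via the sandwich $\left(\begin{smallmatrix} 1 & 0 \\ 0 & 0\end{smallmatrix}\right)A\left(\begin{smallmatrix} 0 & 0 \\ 0 & 1\end{smallmatrix}\right) = \left(\begin{smallmatrix} 0 & b \\ 0 & 0\end{smallmatrix}\right)$, whose image is now known, giving $b' = b$, and the mirror sandwich gives $c' = c$. Combined with the first step this gives $\mu(A) = A$.

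The main obstacle is conceptual rather than computational: because $\mu$ is merely multiplicative, I cannot reduce to a basis and extend additively, so every matrix entry must be isolated purely through products with the diagonal idempotents and with $P$. The one genuinely free choice, the sign of $\mu(P)$, is exactly what produces the two conclusions, and the cleanest way to manage it is the $W$-conjugation normalisation above, which lets me run a single identity-case argument instead of carrying two parallel calculations.
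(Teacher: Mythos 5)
Your proposal is correct and follows essentially the same route as the paper: sandwiching with the diagonal idempotents to fix the diagonal entries, determining $\mu$ on the transposition $P$ up to a sign via $P^2=1$, normalising the sign by conjugating with $W$, and then recovering the off-diagonal entries through the factorisation $\left(\begin{smallmatrix} 0 & z \\ 0 & 0\end{smallmatrix}\right) = \left(\begin{smallmatrix} z & 0 \\ 0 & 0\end{smallmatrix}\right)P$ and the sandwich $\left(\begin{smallmatrix} 1 & 0 \\ 0 & 0\end{smallmatrix}\right)A\left(\begin{smallmatrix} 0 & 0 \\ 0 & 1\end{smallmatrix}\right)$. Your explicit check that $A\mapsto W\mu(A)W$ again satisfies i) and ii) is a welcome addition the paper leaves implicit.
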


It follows from Lemma \ref{kasperslemma3} that either
\begin{equation*}\label{form1}
\mu_1(A) =  A^t  
\end{equation*}
for all $A \in M_2(\mathbb Z)$, or else
\begin{equation*}
\mu_1(A) =  WA^tW  
\end{equation*}
for all $A \in M_2(\mathbb Z)$, where $W = \left(
  \begin{smallmatrix} -1 & 0 \\ 0 & 1 \end{smallmatrix} \right)$. Either way we can combine with
(\ref{diag2x}) and (\ref{eq7}) to conclude that there is the following six terms exact
sequence, where $\epsilon(A)$ denotes the sign of $\Det A$,
\begin{equation*}\label{eq72}
\begin{xymatrix}{
 \mathbb Z^2 \ar[rr]^-{\left(  \begin{smallmatrix} 1- \left|\Det
         A\right| & 0 \\ 0 & 1 -\epsilon(A) \end{smallmatrix}
\right) } & &
 \mathbb Z^2 \ar[rr] & &
 K_0\left(C^*_r\left(\Gamma_{\phi_A}\right)\right) \ar[d] \\
K_1\left(C^*_r\left(\Gamma_{\phi_A}\right)\right) \ar[u] & &
\mathbb Z^2 \ar[ll] & &
\mathbb Z^2
\ar[ll]^-{1 - \left|\Det A\right| (A^t)^{-1}} 
}\end{xymatrix}
\end{equation*}
Since
$$
\left( \begin{smallmatrix} 0 & 1 \\ -1 & 0 \end{smallmatrix} \right)\left|\Det A\right|(A^t)^{-1} \left( \begin{smallmatrix} 0 & -1
  \\ 1 & 0 \end{smallmatrix} \right) = \epsilon(A) A 
$$
we may exchange $1 - |\Det A|(A^t)^{-1}$ with $1-
\epsilon(A) A$ in the above diagram. In this way we obtain the
following conclusions.

\begin{enumerate}
\item[1)] $K_0\left(C^*_r\left(\Gamma_{\phi_A}\right)\right) \simeq \mathbb Z
\oplus \mathbb Z_{\Det A  -1} \oplus \ker (1 - A)$ with the unit $[1] \in
K_0\left(C^*_r\left(\Gamma_{\phi_A}\right)\right)$ represented by $1
\in \mathbb Z_{\Det A  -1}$ and $K_1\left(C^*_r\left(\Gamma_{\phi_A}\right)\right) \simeq \mathbb Z
\oplus \coker (1-A)$, when $\Det A \geq 2$. 
\item[2)] $K_0\left(C^*_r\left(\Gamma_{\phi_A}\right)\right) \simeq
  \mathbb Z^2 \oplus \ker (1 - A)$ with the unit $[1] \in
K_0\left(C^*_r\left(\Gamma_{\phi_A}\right)\right)$ represented by
$(1,0) 
\in \mathbb Z^2$ and $K_1\left(C^*_r\left(\Gamma_{\phi_A}\right)\right) \simeq \mathbb Z^2
\oplus \coker (1-A)$, when $\Det A =1$.
\item[3)] $K_0\left(C^*_r\left(\Gamma_{\phi_A}\right)\right) \simeq \mathbb Z
\oplus \mathbb Z_2 \oplus \ker (1+ A)$ with the unit $[1] \in
K_0\left(C^*_r\left(\Gamma_{\phi_A}\right)\right)$ represented by $1
\in \mathbb Z$ and $K_1\left(C^*_r\left(\Gamma_{\phi_A}\right)\right) \simeq  \mathbb Z
\oplus \coker (1+A)$,
when $\Det A = -1$.
\item[4)] $K_0\left(C^*_r\left(\Gamma_{\phi_A}\right)\right) \simeq \mathbb Z_2
\oplus \mathbb Z_{|\Det A|  -1} \oplus \ker (1+ A)$  with the unit $[1] \in
K_0\left(C^*_r\left(\Gamma_{\phi_A}\right)\right)$ represented by $1
\in \mathbb Z_{|\Det A|    -1}$ and $
K_1\left(C^*_r\left(\Gamma_{\phi_A}\right)\right) \simeq \coker (1+A)$,
when $\Det A \leq -2$.
\end{enumerate}

When we specialise to the cases where all eigenvalues of $A$ have
modulus greater than $1$ we recover Corollary 4.12 from \cite{EHR}.

\subsection{The three-dimensional torus}\label{n=3} The maps
(\ref{zz1}) we seek are now
$$
\mu_i : M_3(\mathbb Z) \to M_4(\mathbb Z),  \ i = 0,1. 
$$  
To find $\mu_0$ we use the isomorphism $\beta$ from 
\begin{equation*}
\begin{split}
&\left(K_0(C(\mathbb T)) \otimes K_0(C(\mathbb T)) \otimes
  K_0(C(\mathbb T))\right) 
\oplus \left(K_0(C(\mathbb T)) \otimes K_1(C(\mathbb T)) \otimes
  K_1(C(\mathbb T))\right) \\
&\oplus
\left(K_1(C(\mathbb T)) \otimes K_0(C(\mathbb T)) \otimes
  K_1(C(\mathbb T)) \right) \oplus \left(K_1(C(\mathbb T)) \otimes
  K_1(C(\mathbb T)) \otimes K_0(C(\mathbb T))\right)
\end{split}
\end{equation*} 
to $K_0(C(\mathbb T^3))$
given by the K\"unneth theorem, \cite{S}. We will identify
$K_0\left(C\left( \mathbb T^3\right)\right)$ with the domain of $\beta$
and hence with $\mathbb Z^4$ by using the canonical isomorphisms of the summands
with $\mathbb Z$. The naturality of $\beta$ implies that
$$
\mu_0\left( \begin{smallmatrix} a & 0 & 0 \\ 0 & b & 0 \\ 0 & 0 & c
  \end{smallmatrix} \right) = \left( \begin{smallmatrix} 1 & 0 & 0 & 0
    \\ 0 & bc & 0 & 0 \\ 0 & 0 & ac & 0 \\ 0 & 0 & 0 & ab 
  \end{smallmatrix} \right)
$$
for all $a,b,c \in \mathbb Z$. Since $\mu_0(A)$ commutes with 
$$
\mu_0\left( \begin{smallmatrix} 2 & 0 & 0 \\ 0 & 2 & 0 \\ 0 & 0 & 2
  \end{smallmatrix} \right) = \left( \begin{smallmatrix} 1 & 0 & 0 & 0
    \\ 0 & 4 & 0 & 0 \\ 0 & 0 & 4 & 0 \\ 0 & 0 & 0 & 4 
  \end{smallmatrix} \right) 
$$
an application of Lemma \ref{kasperslemma2} shows that there is a map
$\mu'_0 : M_3(\mathbb Z) \to M_3(\mathbb Z)$ such that
$$
\mu_0(A) = \left( \begin{smallmatrix} 1 & 0 \\ 0 & \mu'_0(A) \end{smallmatrix}
\right) .
$$
Note that 
\begin{equation}\label{diag}
\mu'_0\left( \begin{smallmatrix} a & 0 & 0 \\ 0 & b & 0 \\ 0& 0 &
    c\end{smallmatrix} \right) = \left( \begin{smallmatrix} bc & 0 & 0 \\ 0 & ac & 0 \\ 0& 0 &
    ab\end{smallmatrix} \right) 
\end{equation}
for all $a,b,c \in \mathbb Z$. What we need is now the following.

\begin{lemma}\label{id13} Let $\mu : M_3(\mathbb Z) \to M_3(\mathbb Z)$
  be a map such that
\begin{enumerate}
\item[i)] $\mu(AB) = \mu (A) \mu(B)$ for all $A,B \in M_3(\mathbb Z)$
  and
\item[ii)] $\mu \left( \begin{smallmatrix} a & 0 & 0 \\ 0 & b & 0 \\ 0
      & 0 & c \end{smallmatrix}
  \right) =\left( \begin{smallmatrix} bc & 0 & 0 \\ 0 & ac & 0 \\ 0 &
      0 & ab \end{smallmatrix}
  \right)$ for all $a,b,c \in \mathbb Z$. 
\end{enumerate}
After
  conjugation by one of the matrices
\begin{equation}\label{conjumat} \left( \begin{smallmatrix} 1 & 0 & 0 \\ 0 & 1 & 0 \\ 0 & 0 & 1 \end{smallmatrix}
\right), \ 
\left( \begin{smallmatrix} 1 & 0 & 0 \\ 0 & 1 & 0 \\ 0 & 0 & -1 \end{smallmatrix}
\right), \ \left( \begin{smallmatrix} -1 & 0 & 0 \\ 0 & 1 & 0 \\ 0 & 0 & 1 \end{smallmatrix}
\right) \ \text{or} \   \left( \begin{smallmatrix} -1 & 0 & 0 \\ 0 & 1 & 0 \\ 0 & 0 & -1 \end{smallmatrix}
\right),
\end{equation} 
the map $\mu$ agrees with the map which takes
  a matrix to its matrix of cofactors, i.e.
$$ 
 \mu \left( \begin{smallmatrix} a_{11} & a_{12} & a_{13} \\ a_{21} & a_{22}
    & a_{23} \\ a_{31} & a_{32} & a_{33} \end{smallmatrix}
\right) = \left( \begin{matrix} \Det \left( \begin{smallmatrix} a_{22} &
        a_{23} \\ a_{32} & a_{33} \end{smallmatrix} \right) &  - \Det \left( \begin{smallmatrix} a_{21} &
        a_{23} \\ a_{31} & a_{33} \end{smallmatrix} \right) &  \Det \left( \begin{smallmatrix} a_{21} &
        a_{22} \\ a_{31} & a_{32} \end{smallmatrix} \right) \\   -\Det
    \left( \begin{smallmatrix} a_{12} &
        a_{13} \\ a_{32} & a_{33} \end{smallmatrix} \right) &   \Det \left( \begin{smallmatrix} a_{11} &
        a_{13} \\ a_{31} & a_{33} \end{smallmatrix} \right) &  -\Det \left( \begin{smallmatrix} a_{11} &
        a_{12} \\ a_{31} & a_{32} \end{smallmatrix} \right) \\   \Det \left( \begin{smallmatrix} a_{12} &
        a_{13} \\ a_{22} & a_{23} \end{smallmatrix} \right) &   - \Det
    \left( \begin{smallmatrix} a_{11} &
        a_{13} \\ a_{21} & a_{23} \end{smallmatrix} \right) &  \Det \left( \begin{smallmatrix} a_{11} &
        a_{12} \\ a_{21} & a_{22} \end{smallmatrix} \right)
  \end{matrix} \right)
$$
\begin{proof} 
It follows from ii) that
$$
\mu\left( \begin{smallmatrix} 1 & 0 & 0 \\ 0 & 1 & 0 \\ 0 & 0 & 0
  \end{smallmatrix} \right)
  = \left( \begin{smallmatrix} 0 & 0 & 0 \\ 0 & 0 & 0 \\ 0 & 0 & 1
  \end{smallmatrix} \right),
$$
which combined with condition i) implies that 
$\mu\left( \begin{smallmatrix} A & 0 \\ 0 & 1 \end{smallmatrix}\right)$
commutes with $\left( \begin{smallmatrix} 0 & 0 & 0 \\ 0 & 0 & 0 \\ 0 & 0 & 1
  \end{smallmatrix} \right)$ for every $A \in M_2(\mathbb Z)$. It follows
that there are multiplicative maps
$\nu  : M_2(\mathbb Z) \to M_2(\mathbb Z)$
and
$\kappa : M_2(\mathbb Z) \to \mathbb Z$
such that
$$
\mu \left( \begin{smallmatrix} A & 0 \\ 0 & 1 \end{smallmatrix}\right) =
\left( \begin{smallmatrix} \nu(A) & 0 \\ 0 & \kappa(A) \end{smallmatrix}\right)
$$
for all $A \in M_2(\mathbb Z)$. It follows from (\ref{diag}) that
$
\kappa \left( \begin{smallmatrix} a & 0 \\ 0 & b
  \end{smallmatrix}\right) = ab$
for all $a,b \in \mathbb Z$ so Lemma \ref{kasperslemma} implies that
$\kappa(A) = \Det A$. To determine $\nu$ let $a,b,c,d \in \mathbb Z$ and write
$
\nu \left( \begin{smallmatrix} a & b \\ c & d \end{smallmatrix}
\right) = \left( \begin{smallmatrix} a' & b' \\ c' & d' \end{smallmatrix}
\right)$.
Note that $\nu\left( \begin{smallmatrix} a & 0 \\ 0 & 0 \end{smallmatrix}
\right) = \left( \begin{smallmatrix} 0 & 0 \\ 0 & a \end{smallmatrix}
\right)$ by (\ref{diag}). Since
$$
\left( \begin{smallmatrix} a & 0 \\ 0 & 0 \end{smallmatrix}
\right) = \left( \begin{smallmatrix} 1 & 0 \\ 0 & 0 \end{smallmatrix}
\right)\left( \begin{smallmatrix} a & b \\ c & d \end{smallmatrix}
\right)\left( \begin{smallmatrix} 1 & 0 \\ 0 & 0 \end{smallmatrix}
\right),
$$
it follows from i) and (\ref{diag}) that
$d' = a$. By using $\left( \begin{smallmatrix} 0 & 0\\ 0 & 1 \end{smallmatrix}
\right)$ instead it follows in the same way that $a' = d$. In particular, $\nu \left( \begin{smallmatrix} 0 & 1 \\ 1
    & 0 \end{smallmatrix} \right) = \left( \begin{smallmatrix} 0 & x \\ y & 0 \end{smallmatrix}\right)$
for some $x,y \in \mathbb Z$. Since $\nu \left( \begin{smallmatrix} 0 & 1 \\ 1 & 0 \end{smallmatrix}
\right)^2 =  \nu \left( \begin{smallmatrix} 1 & 0 \\ 0 & 1 \end{smallmatrix}
\right) =  \left( \begin{smallmatrix} 1 & 0 \\ 0 & 1 \end{smallmatrix}
\right)$, we find that
\begin{equation}\label{signalt}
\nu \left( \begin{smallmatrix} 0 & 1 \\ 1 & 0 \end{smallmatrix}
\right) =  \left( \begin{smallmatrix} 0 & \pm 1 \\ \pm 1 & 0 \end{smallmatrix}
\right).
\end{equation}
Since 
$\left( \begin{smallmatrix} 0 & 1 \\ 1 & 0 \end{smallmatrix}
\right)\left( \begin{smallmatrix} 1& 0 \\ 0 & 0 \end{smallmatrix}
\right) = \left( \begin{smallmatrix} 0 & 0 \\ 1 & 0 \end{smallmatrix}
\right)$
we conclude that
$$
\nu \left( \begin{smallmatrix} 0 & 0 \\ 1 & 0 \end{smallmatrix}
\right) =  \left( \begin{smallmatrix} 0 & \pm 1 \\ \pm 1 & 0 \end{smallmatrix}
\right) \left( \begin{smallmatrix} 0 & 0 \\ 0 & 1 \end{smallmatrix}
\right) = \left( \begin{smallmatrix} 0 & \pm 1 \\ 0 & 0 \end{smallmatrix}
\right) .
$$
Similarly, we find that
$$
\nu \left( \begin{smallmatrix} 0 & 1 \\ 0 & 0 \end{smallmatrix}
\right) =  \left( \begin{smallmatrix} 0 & 0 \\ \pm 1 & 0 \end{smallmatrix}
\right) .
$$
Note also that $\nu(zA) = z\nu(A)$ for all $z \in \mathbb Z$ and all $A$. We can therefore continue the
quest for the values of $b',c'$ by using that
$$
\left( \begin{smallmatrix} 1 & 0 \\ 0 & 0 \end{smallmatrix}
\right)  \left( \begin{smallmatrix} a & b \\ c & d \end{smallmatrix}
\right)\left( \begin{smallmatrix} 0 & 0 \\ 0 & 1 \end{smallmatrix}
\right) = \left( \begin{smallmatrix} 0 & b \\ 0 & 0 \end{smallmatrix}
\right)
$$
which implies that
$$
b \left( \begin{smallmatrix} 0 & 0 \\ \pm 1 & 0 \end{smallmatrix}
\right) = \nu \left( \begin{smallmatrix} 0 & b \\ 0 & 0 \end{smallmatrix}
\right) = \left( \begin{smallmatrix} 0 & 0 \\ 0 & 1 \end{smallmatrix}
\right)\left( \begin{smallmatrix} a' & b' \\ c' & d' \end{smallmatrix}
\right)\left( \begin{smallmatrix} 1& 0 \\ 0 & 0 \end{smallmatrix}
\right) = \left( \begin{smallmatrix} 0 & 0 \\ c' & 0 \end{smallmatrix}
\right).
$$
Thus $c' = \pm b$. Similarly, we find that $b' = \pm c$, and
hence
$$
\nu\left( \begin{smallmatrix} a & b \\ c & d \end{smallmatrix}
\right) = \left( \begin{smallmatrix} d & \pm c \\ \pm b & a \end{smallmatrix}
\right) .
$$
for all $a,b,c,d \in \mathbb Z$ where the sign only depends
on (\ref{signalt}). 

Now repeat the argument; since $\mu\left(\begin{smallmatrix} 0 & 0 & 0
    \\ 0 & 1 & 0 \\ 0 & 0 & 1 \end{smallmatrix}\right) = \left(\begin{smallmatrix} 1 & 0 & 0
    \\ 0 & 0 & 0 \\ 0 & 0 & 0 \end{smallmatrix}\right)$ a similar
argument shows that
$$
\mu \left( \begin{smallmatrix} 1 & 0 & 0 \\ 0 & a & b \\ 0 & c & d \end{smallmatrix}
\right) = \left(\begin{smallmatrix} ad-cb & 0 & 0
    \\ 0 & d & \pm c \\ 0 & \pm b & a \end{smallmatrix}\right)
$$
where the sign depends on the sign of
$$
\mu \left( \begin{smallmatrix} 1 & 0 & 0 \\ 0 & 0 & 1 \\ 0 & 1 & 0 \end{smallmatrix}
\right) = \mu \left( \begin{smallmatrix} -1 & 0 & 0 \\ 0 & 0 & \pm 1
    \\ 0 & \pm 1 & 0\end{smallmatrix}
\right).
$$
It follows that after conjugation of $\mu$ with one of the matrices
from (\ref{conjumat}) we may assume that
\begin{equation}\label{compare1}
\mu \left( \begin{smallmatrix} 1 & 0 & 0 \\ 0 & a & b \\ 0 & c & d \end{smallmatrix}
\right) = \left(\begin{smallmatrix} ad-cb & 0 & 0
    \\ 0 & d & - c \\ 0 & -b & a \end{smallmatrix}\right)
\end{equation}
for all $a,b,c,d \in \mathbb Z$
and that
\begin{equation}\label{compare2}
\mu \left( \begin{smallmatrix} a & b & 0 \\ c & d & 0 \\ 0 & 0 & 1 \end{smallmatrix}
\right) = \left(\begin{smallmatrix} d & -c & 0
    \\ -b & a & 0 \\ 0 & 0 & ad-cb \end{smallmatrix}\right)
\end{equation}
for all $a,b,c,d \in \mathbb Z$. Note that 
\begin{equation}\label{notethat}
\left( \begin{smallmatrix} 1 & 0 & 0 \\ 0 & 0 & 1 \\ 0 & 1 & 0 \end{smallmatrix}
\right)\left( \begin{smallmatrix} 0 & 1 & 0 \\ 1 & 0 & 0 \\ 0 & 0 & 1 \end{smallmatrix}
\right)\left( \begin{smallmatrix} 1 & 0 & 0 \\ 0 & 0 & 1 \\ 0 & 1 & 0 \end{smallmatrix}
\right) = \left( \begin{smallmatrix} 0 & 0 & 1 \\ 0 & 1 & 0 \\ 1& 0 & 0 \end{smallmatrix}
\right)
\end{equation}
which now implies that
$$
\mu\left( \begin{smallmatrix} 0 & 0 & 1 \\ 0 & 1 & 0 \\ 1& 0 & 0 \end{smallmatrix}
\right) = \left( \begin{smallmatrix} -1 & 0 & 0 \\ 0 & 0 & -1 \\ 0 & -1 & 0 \end{smallmatrix}
\right)\left( \begin{smallmatrix} 0 & -1 & 0 \\ -1 & 0 & 0 \\ 0 & 0 & -1 \end{smallmatrix}
\right)\left( \begin{smallmatrix} -1 & 0 & 0 \\ 0 & 0 & -1 \\ 0 & -1 & 0 \end{smallmatrix}
\right) = \left( \begin{smallmatrix} 0 & 0 & -1 \\ 0 & -1 & 0 \\ -1 & 0 & 0 \end{smallmatrix}
\right).
$$
It follows that when we apply the above arguments to the map
$$
\left( \begin{smallmatrix} a & b  \\ c  & d  \end{smallmatrix}
\right) \mapsto \mu \left( \begin{smallmatrix} a & 0 & b \\ 0 & 1 & 0 \\ c& 0 & d \end{smallmatrix}
\right)
$$
we conclude that
\begin{equation}\label{compare3}
\mu \left( \begin{smallmatrix} a & 0 & b \\ 0 & 1 & 0 \\ c& 0 & d \end{smallmatrix}
\right) = \left( \begin{smallmatrix} d & 0 & -c \\ 0 & ab-cd & 0 \\ -b & 0 & a \end{smallmatrix}
\right)
\end{equation}
for all $a,b,c,d \in \mathbb Z$.

It follows from (\ref{compare1}), (\ref{compare2}) and
  (\ref{compare3}) that $\mu$ agrees with the cofactor map on
  elementary matrices. Since the elementary matrices generate
  $Sl_3(\mathbb Z)$ and since both maps in question are
  multiplicative, we conclude that they agree on $Sl_3(\mathbb
  Z)$. Since the maps also agree on diagonal matrices by (\ref{diag}) it
  follows first that they agree on $Gl_3( \mathbb Z)$ and then from the Smith normal form that they agree on all of
  $M_3(\mathbb Z)$.
\end{proof}
\end{lemma}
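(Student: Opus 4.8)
The plan is to show first that the cofactor map itself satisfies i) and ii), and then that these two properties rigidify $\mu$ up to the indicated sign conjugations. Multiplicativity of $\cof$ follows from the identity $\operatorname{adj}(AB) = \operatorname{adj}(B)\operatorname{adj}(A)$ upon transposing, and a direct check on diagonal matrices gives $\cof\left(\diag(a,b,c)\right) = \diag(bc,ac,ab)$, which is exactly the formula in ii). Since both $\mu$ and $\cof$ are multiplicative and agree on diagonal matrices, and since every $A \in M_3(\Z)$ has a Smith normal form $A = UDV$ with $U,V \in Gl_3(\Z)$ and $D$ diagonal, while $Gl_3(\Z)$ is generated by $Sl_3(\Z)$ together with a single diagonal sign matrix and $Sl_3(\Z)$ is generated by the elementary transvection matrices, it will suffice to pin down $\mu$ on the elementary matrices.

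First I would exploit condition ii) at the idempotent $\diag(1,1,0)$, which it sends to the complementary idempotent $\diag(0,0,1)$. Any matrix with a $2\times 2$ block $A$ in the top-left corner and a $1$ in the $(3,3)$-entry commutes with $\diag(1,1,0)$, so its image under $\mu$ commutes with $\diag(0,0,1)$ and is therefore block diagonal. This produces a multiplicative $\nu : M_2(\Z) \to M_2(\Z)$ on the block together with a multiplicative $\kappa : M_2(\Z) \to \Z$ on the corner. By ii) the corner carries the determinant on diagonal $2\times 2$ matrices, so \reflemma{kasperslemma} forces $\kappa = \Det$; and the diagonal pattern $\diag(a,b)\mapsto\diag(b,a)$ of $\nu$, together with the fact that the image of the swap matrix squares to the identity and hence has entries $\pm1$, lets me run the argument of \reflemma{kasperslemma3} to identify $\nu$ with the $2\times 2$ cofactor map up to one overall sign recorded by that swap.

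Next I would carry out the same block analysis for each of the three coordinate $M_2(\Z)$-planes inside $M_3(\Z)$, obtaining explicit formulas for $\mu$ on each block, each correct up to an independent sign coming from the image of the relevant coordinate transposition. The main obstacle, and the only genuinely delicate point, is the sign bookkeeping: the coordinate transpositions are related by a multiplicative identity expressing one of them as a conjugate of another by a transposition, which forces the image of the third transposition to be determined by the other two, so that only two of the three signs are really free. Conjugating $\cof$ by a diagonal matrix with entries $\pm 1$ again yields a solution of i) and ii) and realizes precisely these sign changes; since conjugation by $-I$ acts trivially, the residual freedom is exactly conjugation by one of the four matrices in (\ref{conjumat}).

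Finally, after performing this conjugation I expect $\mu$ to agree with $\cof$ on every elementary matrix. Multiplicativity then propagates the agreement to all of $Sl_3(\Z)$, condition ii) yields agreement on diagonal matrices and hence on $Gl_3(\Z)$, and the Smith normal form extends it to all of $M_3(\Z)$, completing the proof.
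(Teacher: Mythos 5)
Your proposal is correct and follows essentially the same route as the paper: the block decomposition via the idempotent $\mu(\diag(1,1,0))=\diag(0,0,1)$, the identification $\kappa=\Det$ through Lemma \ref{kasperslemma}, the determination of $\nu$ up to a sign carried by the image of the swap matrix, the sign bookkeeping via the conjugation identity among coordinate transpositions, and the final passage through elementary matrices, $Sl_3(\mathbb Z)$, diagonal matrices and the Smith normal form. The only cosmetic difference is that you phrase the $2\times 2$ step as an adaptation of Lemma \ref{kasperslemma3} and explain the count of four conjugating matrices via the trivial action of $-I$, which the paper leaves implicit.
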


If we denote the cofactor matrix of a matrix $A \in M_3(\mathbb Z)$ by
$\cof(A)$ we find that
$$
\mu_0(A) = \left( \begin{smallmatrix} 1 & 0 \\ 0 & W\cof(A)^tW
  \end{smallmatrix} \right) .
$$
where $W$ is one of the matrices from (\ref{conjumat}).

\bigskip

To find $\mu_1$ by the same method we use again the
K\"unneth  theorem which now gives us a natural isomorphism $\alpha$
from 
\begin{equation*}\label{K1}
\begin{split}
&\left(K_1(C(\mathbb T)) \otimes K_1(C(\mathbb T)) \otimes
  K_1(C(\mathbb T))\right) \oplus \left(K_1(C(\mathbb T)) \otimes
  K_0(C(\mathbb T)) \otimes K_0(C(\mathbb T))\right) \oplus \\
&\left(K_0(C(\mathbb T)) \otimes K_1(C(\mathbb T)) \otimes
  K_0(C(\mathbb T))\right) \oplus \left(K_0(C(\mathbb T)) \otimes K_0(C(\mathbb T)) \otimes
  K_1(C(\mathbb T))\right)
\end{split}
\end{equation*}
to
$K_1(C(\mathbb T^3))$.
We will identify $K_1\left(C\left(\mathbb T^3\right)\right)$ with the
domain of $\alpha$ and 
with $\mathbb Z^4$ by using the canonical isomorphisms of each of the
above summands with $\mathbb Z$. The naturality of $\alpha$ implies that the map
$\mu_1 : M_3(\mathbb Z) \to M_4(\mathbb Z)$ which we seek 
satisfies that
\begin{equation}\label{diag2}
\mu_1 \left( \begin{smallmatrix} a & 0 & 0 \\ 0 & b & 0 \\ 0 & 0 & c
  \end{smallmatrix} \right) = \left( \begin{smallmatrix} abc & 0 & 0 & 0\\ 0 & a
    & 0 & 0 \\ 0 & 0 & b & 0 \\ 0 & 0 & 0& c 
  \end{smallmatrix} \right) 
\end{equation}
for all $a,b,c \in \mathbb Z$. Note that $\mu_1(A)$ commutes
with
$$
\mu_1 \left( \begin{smallmatrix} 2 & 0 & 0 \\ 0 & 2 & 0 \\ 0 &
    0 & 2 \end{smallmatrix} \right) = \left( \begin{smallmatrix} 8 & 0 & 0 & 0\\ 0 & 2
    & 0 & 0 \\ 0 & 0 & 2 & 0 \\ 0 & 0 & 0& 2 
  \end{smallmatrix} \right) .
$$
This implies that there are multiplicative maps $\delta : M_3(\mathbb
Z) \to \mathbb Z$ and $\nu : M_3(\mathbb Z) \to  M_3(\mathbb Z)$ such
that
$$
\mu_1(A) = \left(\begin{smallmatrix} \delta(A) & 0 \\ 0 & \nu(A)
  \end{smallmatrix} \right) 
$$
for all $A \in M_3(\mathbb Z)$. By Lemma \ref{kasperslemma}
$\delta(A) = \Det A$. To determine $\nu$ we need the following lemma.

\begin{lemma}\label{id3} Let $\mu : M_3(\mathbb Z) \to M_3(\mathbb Z)$
  be a map such that
\begin{enumerate}
\item[i)] $\mu(AB) = \mu (A) \mu(B)$ for all $A,B \in M_3(\mathbb Z)$
  and
\item[ii)] $\mu \left( \begin{smallmatrix} a & 0 & 0 \\ 0 & b & 0 \\ 0
      & 0 & c \end{smallmatrix}
  \right) =\left( \begin{smallmatrix} a & 0 & 0 \\ 0 & b & 0 \\ 0 &
      0 & c \end{smallmatrix}
  \right)$ for all $a,b,c \in \mathbb Z$ 
\end{enumerate}
It follows that $\mu(A) = WAW$ for all $A \in M_3(\mathbb Z)$, where
$W$ is one of the matrices from (\ref{conjumat}).
\end{lemma}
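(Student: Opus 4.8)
The plan is to mirror the proof of Lemma \ref{id13}, reducing everything to two-by-two corners and invoking Lemma \ref{kasperslemma3}, with the cofactor map replaced by the identity. First I would exploit that condition ii) forces $\mu$ to fix each of the three diagonal idempotents $E_{ii}$. Since every matrix of the form $\left(\begin{smallmatrix} A & 0 \\ 0 & 1\end{smallmatrix}\right)$ with $A \in M_2(\mathbb Z)$ commutes with $E_{33}$, its image commutes with $\mu(E_{33}) = E_{33}$ and hence splits as $\left(\begin{smallmatrix} \nu(A) & 0 \\ 0 & \kappa(A)\end{smallmatrix}\right)$ with $\nu : M_2(\mathbb Z) \to M_2(\mathbb Z)$ and $\kappa : M_2(\mathbb Z) \to \mathbb Z$ multiplicative. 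Condition ii) makes $\kappa$ equal to $1$ on diagonal matrices and makes $\nu$ fix them, so Lemma \ref{kasperslemma2} gives $\kappa \equiv 1$ and Lemma \ref{kasperslemma3} gives that $\nu$ is either the identity or conjugation by $\left(\begin{smallmatrix} -1 & 0 \\ 0 & 1\end{smallmatrix}\right)$.

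The same argument applied to the bottom-right corner $\left(\begin{smallmatrix} 1 & 0 \\ 0 & A\end{smallmatrix}\right)$, which commutes with $E_{11}$, and to the $(1,3)$-corner $\left(\begin{smallmatrix} a & 0 & b \\ 0 & 1 & 0 \\ c & 0 & d\end{smallmatrix}\right)$, which commutes with $E_{22}$, shows that $\mu$ restricts to either the identity or a sign-conjugation on each of the three coordinate-pair corners. Conjugating $\mu$ by a diagonal matrix $\diag(\epsilon_1, 1, \epsilon_3)$ with $\epsilon_i \in \{1,-1\}$ — these are precisely the four matrices in (\ref{conjumat}) — scales the $(1,2)$-corner by $\epsilon_1$ and the $(2,3)$-corner by $\epsilon_3$ while fixing the diagonal, so I can choose $\epsilon_1,\epsilon_3$ to make $\mu$ the identity on both the top-left and the bottom-right corners at once.

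The point where care is needed is the $(1,3)$-corner, whose sign is no longer free once $\epsilon_1,\epsilon_3$ are spent. Here the identity (\ref{notethat}) expresses the $(1,3)$-transposition as the product of the $(2,3)$- and $(1,2)$-transpositions, so that $\mu$ of the $(1,3)$-transposition is the corresponding product of the values on the other two corners; since those have been normalized to the honest transpositions, $\mu$ sends the $(1,3)$-transposition to itself, and Lemma \ref{kasperslemma3} then forces the $(1,3)$-corner to be the identity as well. Thus after this single conjugation $\mu$ fixes all three corners, and in particular fixes every elementary transvection $1 + zE_{ij}$.

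Finally, since the transvections generate $Sl_3(\mathbb Z)$ and $\mu$ is multiplicative, $\mu$ is the identity on $Sl_3(\mathbb Z)$; together with condition ii) on diagonal matrices (and $Gl_3(\mathbb Z) = Sl_3(\mathbb Z)\cdot\{\diag(\pm 1,1,1)\}$) this makes $\mu$ the identity on $Gl_3(\mathbb Z)$, whence the Smith normal form $A = UDV$ gives $\mu(A) = A$ for all $A \in M_3(\mathbb Z)$. Undoing the conjugation, noting $W^{-1} = W$ for $W = \diag(\epsilon_1, 1, \epsilon_3)$, yields $\mu(A) = WAW$. I expect the only genuine obstacle to be the sign bookkeeping of the third paragraph: one must verify that the three corner-signs are not independent, and that the single relation imposed by (\ref{notethat}) is exactly what makes the two free parameters $\epsilon_1,\epsilon_3$ enough, so that the four conjugating matrices in (\ref{conjumat}) suffice.
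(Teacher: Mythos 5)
Your proposal is correct and follows essentially the same route as the paper's proof: reduce to the three $2\times 2$ coordinate corners via commutation with the diagonal idempotents, apply Lemma \ref{kasperslemma3} (and Lemma \ref{kasperslemma2}) to each, normalize the $(1,2)$- and $(2,3)$-corners by conjugating with one of the four matrices from (\ref{conjumat}), pin down the $(1,3)$-sign using the relation (\ref{notethat}), and finish with elementary matrices, $Sl_3(\mathbb Z)$, and the Smith normal form. The sign bookkeeping you flag as the delicate point is handled exactly as in the paper, so no further work is needed.
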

\begin{proof} It follows from Lemma \ref{kasperslemma3} applied to
$$
M_2(\mathbb Z) \ni A \mapsto \mu \left( \begin{smallmatrix} A & 0 \\ 0
    & 1 \end{smallmatrix} \right)
$$
and
$$
M_2(\mathbb Z) \ni A \mapsto \mu \left( \begin{smallmatrix} 1 & 0 \\ 0
    & A \end{smallmatrix} \right)
$$
that we can arrange, if necessary by conjugating $\mu$ with one of the matrices
from (\ref{conjumat}), that
$$
\mu \left( \begin{smallmatrix} A & 0 \\ 0
    & 1 \end{smallmatrix} \right) =  \left( \begin{smallmatrix} A & 0 \\ 0
    & 1 \end{smallmatrix} \right) \ \text{and} \  \mu \left( \begin{smallmatrix} 1 & 0 \\ 0
    & A \end{smallmatrix} \right) =  \left( \begin{smallmatrix} 1 & 0 \\ 0
    & A \end{smallmatrix} \right)
$$
for all $A \in M_2(\mathbb Z)$. Another application of Lemma
\ref{kasperslemma3} shows that
\begin{equation}\label{pmchoice}
\mu \left( \begin{smallmatrix} a & 0 & b \\ 0 & 1 & 0 \\ c & 0 & d
  \end{smallmatrix} \right) = \left( \begin{smallmatrix} a & 0 & \pm b
    \\ 0 & 1 & 0 \\ \pm c & 0 & d
  \end{smallmatrix} \right) 
\end{equation}
for all $a,b,c,d \in \mathbb Z$. To determine the sign note that it
follows from (\ref{notethat})
that
\begin{equation}\label{notethat2}
\mu \left( \begin{smallmatrix} 0 & 0 & 1 \\ 0 & 1 & 0 \\ 1 & 0 & 0
  \end{smallmatrix} \right) = \left( \begin{smallmatrix} 0 & 0 & 1 \\ 0 & 1 & 0 \\ 1 & 0 & 0
  \end{smallmatrix} \right) ,
\end{equation}
which means the sign in (\ref{pmchoice}) must be $+$. It follows that $\mu$ is the identity on all elementary matrices and hence on all
of $Sl_3(\mathbb Z)$. The Smith normal form then shows that $\mu$ is
the identity on all of $M_3(\mathbb Z)$.
\end{proof}

It follows that up to conjugation by a diagonal matrix
from $Gl_4(\mathbb Z)$, $\mu_1$ has the form
$$
\mu_1(A) = \left( \begin{smallmatrix} \Det A & 0 \\ 0 & A^t \end{smallmatrix}
\right) .
$$
Since $\cof(A) = \Det A \left(A^{-1}\right)^t$ we conclude that the
sixterms exact sequence (\ref{eq7}) in the case $n
=3$ becomes
\begin{equation*}\label{eq73}
\begin{xymatrix}{
 \mathbb Z^4 \ar[rrr]^-{\left(  \begin{smallmatrix} 1- \left|\Det
         A\right| & 0 \\ 0 & 1 - \epsilon(A) A \end{smallmatrix}
\right) } & & &
 \mathbb Z^4 \ar[rrr] & & &
 K_0\left(C^*_r\left(\Gamma_{\phi_A}\right)\right) \ar[d] \\
K_1\left(C^*_r\left(\Gamma_{\phi_A}\right)\right) \ar[u] & & &
\mathbb Z^4 \ar[lll] & & &
\mathbb Z^4 
\ar[lll]^-{\left(  \begin{smallmatrix} 1- \epsilon(A) & 0 \\ 0 & 1 -\epsilon(A)\cof(A) \end{smallmatrix}
\right)} 
}\end{xymatrix}
\end{equation*}
It follows that
\begin{enumerate}
\item[1)] $K_0\left(C^*_r\left(\Gamma_{\phi_A}\right)\right) \simeq \mathbb Z
 \oplus \ker (1-\cof(A)) \oplus \mathbb Z_{\Det A -1} \oplus \coker
 (1-A)$ with the unit $[1] \in
K_0\left(C^*_r\left(\Gamma_{\phi_A}\right)\right)$ represented by $1
\in \mathbb Z_{\Det A  -1}$ and $K_1\left(C^*_r\left(\Gamma_{\phi_A}\right)\right) \simeq \mathbb Z
\oplus \ker (1-A) \oplus \coker \left(1 - \cof(A)\right)$, when $\Det A \geq 2$.
\item[2)] $K_0\left(C^*_r\left(\Gamma_{\phi_A}\right)\right) \simeq
K_1\left(C^*_r\left(\Gamma_{\phi_A}\right)\right) \simeq \mathbb Z^2
 \oplus \ker (1-A) \oplus \coker (1 -A)$ with the unit $[1] \in
K_0\left(C^*_r\left(\Gamma_{\phi_A}\right)\right)$ represented by
$(1,0) 
\in \mathbb Z^2$, when $\Det A = 1$. 
\item[3)] $ K_0\left(C^*_r\left(\Gamma_{\phi_A}\right)\right) \simeq
  \mathbb Z \oplus \ker (1-A) \oplus \coker (1+ A)$ with the unit $[1] \in
K_0\left(C^*_r\left(\Gamma_{\phi_A}\right)\right)$ represented by $1
\in \mathbb Z$ and $K_1\left(C^*_r\left(\Gamma_{\phi_A}\right)\right) \simeq \mathbb Z
\oplus \ker (1+A)
\oplus \coker \left(1 +\cof(A)\right) \oplus \mathbb Z_2$,
when $\Det A = -1$.
\item[4)] $K_0\left(C^*_r\left(\Gamma_{\phi_A}\right)\right) \simeq
  \ker (1+\cof(A)) \oplus \mathbb Z_{|\Det A| -1} \oplus \coker (1+
  A)$ with the unit $[1] \in
K_0\left(C^*_r\left(\Gamma_{\phi_A}\right)\right)$ represented by $1
\in \mathbb Z_{|\Det A|  -1}$ and $K_1\left(C^*_r\left(\Gamma_{\phi_A}\right)\right) \simeq \ker (1+A)
\oplus \coker (1+ \cof(A)) \oplus \mathbb Z_2$, when $\Det \leq -2$.
\end{enumerate}

\section{The $C^*$-algebras of strongly transitive affine surjections
  on an $n$-torus, $n \leq 3$}

\subsection{The $C^*$-algebra of a strongly transitive local
  homeomorphism on the circle}\label{n=1x}

A continuous affine map $T$ of the circle has the form
$$
Tt = e^{2\pi i \alpha} t^a
$$
for some $\alpha \in \mathbb R$ and some $a \in \mathbb Z$. By
combining the results of Section \ref{A} with the
K-theory calculations listed in Section \ref{circle} we obtain the
following conclusion.

\begin{itemize}
\item[A)] When $a \geq 2$ the $C^*$-algebra $C^*_r\left(\Gamma_T\right)$ is the
  same for all $\alpha \in \mathbb R$, it is purely
  infinite and simple with $K$-theory groups
  $$
K_0\left( C^*_r(\Gamma_T)\right) \simeq \mathbb Z \oplus \mathbb
Z_{a-1}, \ \ K_1\left( C^*_r(\Gamma_T)\right) \simeq \mathbb Z.
$$
The unit of $C^*_r(\Gamma_T)$ corresponds to $1 \in \mathbb Z_{a-1}
\subseteq K_0\left( C^*_r(\Gamma_T)\right)$.

\item[B)] When $a \leq -2$
 the $C^*$-algebra $C^*_r(\Gamma_T)$ is the
  same for all $\alpha \in \mathbb R$, it is purely
  infinite and simple with $K$-theory groups
$$
K_0\left( C^*_r(\Gamma_T)\right) \simeq \mathbb
Z_{|a|-1}, \ \ K_1\left( C^*_r(\Gamma_T)\right) \simeq \mathbb Z_2.
$$
The unit of $C^*_r(\Gamma_T)$ corresponds to $1 \in \mathbb Z_{|a|-1}
\subseteq K_0\left( C^*_r(\Gamma_T)\right)$.

\item[C)] When $a = \pm 1$, $T$ is a
homeomorphism and 
$C^*_r\left(\Gamma_T\right)$ is finite. When $a = -1$, $T$ is not
strongly transitive and $C^*_r\left(\Gamma_T\right)$ is not
simple. When $a = 1$, $T$ is strongly transitive if and only if
$\alpha$ is not rational. When $\alpha \in \mathbb R \backslash
\mathbb Q$, $C^*_r\left(\Gamma_T\right)$ is an irrational rotation
algebra and its structure is well-known. See \cite{EE}. 
\end{itemize}

It is well-known that two irrational rotation algebras are isomorphic if
and only if the two irrational rotations are conjugate. Now combine
this with the observation that the
degree $a$ of $T$ can be read off from the K-theory groups of
$C^*_r(\Gamma_T)$, and the
well-known fact that a minimal homeomorphism of the circle is
conjugate to an irrational rotation. Combining with Theorem
\ref{boyland} we obtain then the following result regarding strongly transitive local homeomorphisms of the
circle.

\begin{prop}\label{ciclec} Two strongly transitive local homeomorphisms
  $\varphi$ and $\psi$ of the circle are conjugate if and only if the
  associated $C^*$-algebras $C^*_r\left(\Gamma_{\varphi}\right)$ and
  $C^*_r\left(\Gamma_{\psi}\right)$ are isomorphic.
\end{prop}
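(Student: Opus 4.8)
The plan is to prove the two implications separately, the forward one being routine and the converse resting on the classification of strongly transitive local homeomorphisms of the circle by their degree. For the implication that conjugacy implies isomorphism, suppose $\psi = \gamma \circ \varphi \circ \gamma^{-1}$ for a homeomorphism $\gamma$ of $\mathbb{T}$. Then $(x,k,y) \mapsto (\gamma(x),k,\gamma(y))$ is an isomorphism of topological groupoids $\Gamma_{\varphi} \to \Gamma_{\psi}$ and induces a $*$-isomorphism $C^*_r\left(\Gamma_{\varphi}\right) \simeq C^*_r\left(\Gamma_{\psi}\right)$; this direction uses nothing special about the circle.

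For the converse, assume $C^*_r\left(\Gamma_{\varphi}\right) \simeq C^*_r\left(\Gamma_{\psi}\right)$. I would first record that every strongly transitive local homeomorphism $\phi$ of $\mathbb{T}$ is conjugate to a standard model determined by its degree $d_{\phi}$. A local homeomorphism of $\mathbb{T}$ is a covering map, so $\left|d_{\phi}\right| \geq 1$. If $\left|d_{\phi}\right| \geq 2$, Theorem \ref{boyland} gives that $\phi$ is conjugate to $z \mapsto z^{d_{\phi}}$. If $\left|d_{\phi}\right| = 1$ then $\phi$ is a homeomorphism; an orientation-reversing circle homeomorphism always has a fixed point, and a circle homeomorphism with a periodic point is never strongly transitive (a small arc missing that orbit has all its forward images missing it too), so the degree $-1$ case does not occur. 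In the degree $1$ case strong transitivity forces minimality, since a proper closed invariant set would have nonempty open complement $V$ with $\phi(V)=V$, contradicting strong transitivity; hence $\phi$ is conjugate to an irrational rotation $R_{\theta}$ by the classical fact that a minimal circle homeomorphism is conjugate to an irrational rotation. Thus both $\varphi$ and $\psi$ are conjugate to standard models, and by the forward implication the hypothesis descends to an isomorphism between the $C^*$-algebras of the two models.

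It remains to force the models to coincide up to conjugacy, and here I would invoke the computations of Section \ref{circle}: for $z \mapsto z^a$ with $a \geq 2$ one has $K_0 \simeq \mathbb{Z} \oplus \mathbb{Z}_{a-1}$ and $K_1 \simeq \mathbb{Z}$; for $a \leq -2$ one has $K_0 \simeq \mathbb{Z}_{|a|-1}$ and $K_1 \simeq \mathbb{Z}_2$; and for an irrational rotation $K_0 \simeq K_1 \simeq \mathbb{Z}^2$. Comparing the ranks of the free parts and the torsion subgroups, these pairs of groups are mutually non-isomorphic for distinct degrees and are non-isomorphic to the rotation groups, so an isomorphism of the $C^*$-algebras forces $d_{\varphi} = d_{\psi}$ and, when $\left|d_{\varphi}\right| \geq 2$, pins down the common model $z \mapsto z^{d_{\varphi}}$, giving $\varphi \sim z^{d_{\varphi}} \sim \psi$. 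In the remaining case $d_{\varphi} = d_{\psi} = 1$ the two algebras are irrational rotation algebras $A_{\theta}$ and $A_{\eta}$; since $A_{\theta} \simeq A_{\eta}$ holds if and only if $R_{\theta}$ and $R_{\eta}$ are conjugate, we conclude $R_{\theta} \sim R_{\eta}$ and therefore $\varphi \sim R_{\theta} \sim R_{\eta} \sim \psi$.

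The main obstacle, insofar as there is one, is the degree-one case, which is not covered by the elementary fixed-point and contraction arguments available for $\left|d_{\phi}\right| \geq 2$ but instead relies on two external inputs already cited in the excerpt: that a minimal orientation-preserving circle homeomorphism is conjugate to an irrational rotation, and that irrational rotation algebras are classified up to isomorphism by conjugacy of the underlying rotations. The only point demanding care in the expanding case is to verify that the K-groups listed above genuinely separate all admissible degrees, which the displayed table does.
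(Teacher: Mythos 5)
Your argument is correct and follows the same route as the paper, which proves the proposition by combining Theorem \ref{boyland}, the fact that a minimal circle homeomorphism is conjugate to an irrational rotation, the classification of irrational rotation algebras by conjugacy of the rotations, and the observation that the degree can be read off from the $K$-theory listed in Section \ref{circle}. Your additional care in ruling out the degree $\pm 1$ pathologies (no strongly transitive orientation-reversing homeomorphism, and strong transitivity forcing minimality in degree one) only makes explicit what the paper leaves implicit.
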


\subsection{The $C^*$-algebra of a strongly transitive affine
  surjection on the two-torus}\label{alg-two}

An affine local homeomorphism of $\mathbb T^2$
has the form
\begin{equation}\label{formn=2}
Tx = \lambda\phi_A(x)
\end{equation}
for some $A \in M_2(\mathbb Z)$ with $\Det
  A \neq 0$ and some $\lambda \in \mathbb T^2$.

Most of the following results summarise the results of Theorem
\ref{OK?2}, Corollary \ref{affinecon} and
  Section \ref{n=2}, but the case $\Det A = 1$ uses the
  calculation of N.C. Phillips from Example 4.9 of \cite{Ph2} and the
  classification results of Lin and Phillips from \cite{LP}.

Assuming that neither $1$ nor $-1$ is an eigenvalue of $A$ we have the
following:
\begin{itemize}
\item[A)] When $\Det A \geq 2$ the $C^*$-algebra $C^*_r(\Gamma_T)$ is
  isomorphic to $C^*_r\left(\Gamma_{\phi_A}\right)$, it is purely
  infinite and simple with $K$-theory groups
$$
K_0\left( C^*_r(\Gamma_T)\right) \simeq \mathbb Z \oplus \mathbb
Z_{\Det A -1}, \ \ K_1\left( C^*_r(\Gamma_T)\right) \simeq \mathbb Z
\oplus \coker (1-A).
$$
The unit of $C^*_r(\Gamma_T)$ corresponds to $1 \in \mathbb Z_{\Det A-1}
\subseteq K_0\left( C^*_r(\Gamma_T)\right)$.

\item[B)] When $\Det A \in \{-1,1\}$, $T$ is not strongly transitive
  and $C^*_r(\Gamma_T)$ is not simple.

\item[C)] When $\Det A \leq -2$ the $C^*$-algebra $C^*_r(\Gamma_T)$ is is
  isomorphic to $C^*_r\left(\Gamma_{\phi_A}\right)$, it is purely
  infinite and simple with $K$-theory groups
$$
K_0\left( C^*_r(\Gamma_T)\right) \simeq \mathbb Z_2 \oplus \mathbb
Z_{|\Det A| -1} , \ \ K_1\left( C^*_r(\Gamma_T)\right) \simeq  \coker (1+A).
$$
The unit of $C^*_r(\Gamma_T)$ corresponds to $1 \in \mathbb Z_{|\Det A|-1}
\subseteq K_0\left( C^*_r(\Gamma_T)\right)$.  
\end{itemize}

\smallskip

Note that it follows from Theorem \ref{OK?2} that $T$ is not strongly
transitive and 
  $C^*_r\left(\Gamma_T\right)$ not simple when $-1$ is an eigenvalue
  while $1$ is not. 

\smallskip

 Assuming that $1$ is an eigenvalue of $A$ we have the following: 
\begin{itemize}
\item[D)] When $\Det A \geq 2$ the set of $\lambda$'s for which $T$ is
  strongly transitive is the dense proper subset of $\mathbb T^2$
  consisting of the elements $\lambda \in \mathbb T^2$ with the property that the
  closed group generated by
  $\lambda$ and the set $\left\{ x^{-1}\phi_A(x): \ x \in \mathbb T^2
  \right\}$ is all of $\mathbb T^2$. The
  corresponding $C^*$-algebras $C^*_r(\Gamma_T)$ are the same simple and
  purely infinite $C^*$-algebra with $K$-theory groups
$$
K_0\left( C^*_r(\Gamma_T)\right) \simeq \mathbb Z^2 \oplus \mathbb
Z_{\Det A -1} , \ \ K_1\left( C^*_r(\Gamma_T)\right) \simeq \mathbb Z
\oplus \coker(1-A).
$$
The unit of $C^*_r(\Gamma_T)$ corresponds to $1 \in \mathbb Z_{\Det A-1}
\subseteq K_0\left( C^*_r(\Gamma_T)\right)$.

\item[E)] When $\Det A = 1$, the set of $\lambda$'s for which $T$ is
  strongly transitive is a dense proper subset of $\mathbb T^2$. For
  each such $\lambda$ the $C^*$-algebra $C^*_r\left(\Gamma_T\right)$
  is a simple unital AH-algebra with no dimension growth, a unique trace state and real
  rank zero, cf. Example 5.6 of \cite{LP}. The ordered K-theory groups depend on $\lambda$ and are
  calculated in Example 4.9 of \cite{Ph2}.

\item[F)] When $\Det A = -1$, $T$ is not strongly transitive and $C^*_r\left(\Gamma_T\right)$ is not simple.

\item[G)] When $\Det A \leq -2$ the set of $\lambda$'s for which $T$ is
  strongly transitive is the dense proper subset of $\mathbb T^2$ which
  consists of the elements $\lambda \in \mathbb T^2$ with the property that the
  closed group generated by
  $\lambda$ and the set $\left\{ x^{-1}\phi_A(x): \ x \in \mathbb T^2
  \right\}$ is all of $\mathbb T^2$. The
  corresponding $C^*$-algebras $C^*_r(\Gamma_T)$ are the same
  simple and 
  purely infinite $C^*$-algebra with $K$-theory groups
$$
K_0\left( C^*_r(\Gamma_T)\right) \simeq \mathbb Z_2 \oplus \mathbb
Z_{|\Det A| -1}, \ \ K_1\left( C^*_r(\Gamma_T)\right) \simeq  \coker (1+A).
$$
The unit of $C^*_r(\Gamma_T)$ corresponds to $1 \in \mathbb Z_{|\Det A|-1}
\subseteq K_0\left( C^*_r(\Gamma_T)\right)$.  
\end{itemize}


\subsection{The $C^*$-algebra of a strongly transitive affine
  surjection on the three-dimensional torus}\label{alg-3}

We consider now an affine map $T : \mathbb T^3 \to \mathbb T^3$ of the
form
\begin{equation}\label{T3}
Tx = \lambda \phi_A(x)
 \end{equation}
where $A \in M_3(\mathbb Z)$ and $\Det A \neq 0$.

Assume first that none of the numbers $1, -1, \Det
  A$ and $- \Det A$ are an eigenvalue of $A$.
\begin{itemize}
\item[A)] When $\Det A \geq 2$ the $C^*$-algebra $C^*_r(\Gamma_T)$ is is
  isomorphic to $C^*_r\left(\Gamma_{\phi_A}\right)$, it is purely
  infinite and simple with $K$-theory groups
\begin{equation*}
\begin{split}
& K_0\left(C^*_r\left(\Gamma_{T}\right)\right) \simeq \mathbb Z
\oplus \mathbb Z_{\Det A -1} \oplus \coker (1 -A), \\
&K_1\left(C^*_r\left(\Gamma_{T}\right)\right) \simeq \mathbb Z
\oplus \coker (1 -\cof(A)) .
\end{split}
\end{equation*}
The unit of $C^*_r(\Gamma_T)$ corresponds to $1 \in \mathbb Z_{\Det A-1}
\subseteq K_0\left( C^*_r(\Gamma_T)\right)$.

\item[B)] When $\Det A \in \{-1,1\}$, $T$ is not strongly transitive
  and $C^*_r(\Gamma_T)$ is not simple.

\smallskip

\item[C)] When $\Det A \leq -2$ the $C^*$-algebra $C^*_r(\Gamma_T)$ is
  isomorphic to $C^*_r\left(\Gamma_{\phi_A}\right)$, it is purely
  infinite and simple with $K$-theory groups
\begin{equation*}
\begin{split}
& K_0\left(C^*_r\left(\Gamma_{T}\right)\right) \simeq  \mathbb
Z_{|\Det A| -1} \oplus \coker (1+A), \\
&K_1\left(C^*_r\left(\Gamma_{T}\right)\right) \simeq \mathbb Z_2
\oplus \coker (1 + \cof(A))
\end{split}
\end{equation*}
The unit of $C^*_r(\Gamma_T)$ corresponds to $1 \in \mathbb Z_{|\Det A|-1}
\subseteq K_0\left( C^*_r(\Gamma_T)\right)$.  
\end{itemize}

\smallskip

It follows from Theorem \ref{OK?2} that $T$ is not strongly transitive
when one of the numbers $-1, \ \Det A$
  and $-\Det A$ is an eigenvalue of $A$, but $1$ is
  not, and when both $1$ and $-1$ are eigenvalues of $A$. It remains
  therefore only to consider the case when $1$ is an eigenvalue, but
  $-1$ is not.

\smallskip

Assume that $1$ is an eigenvalue of $A$ and
  that $-1$ is not.
\begin{itemize}
\item[D)] When $\Det A \geq 2$ the set of $\lambda$'s
  for which $T$ is strongly transitive is the dense proper subset of $\mathbb T^2$ which
  consists of $\lambda \in \mathbb T^3$ with the property that the
  closed group generated by
  $\lambda$ and the set $\left\{ x^{-1}\phi_A(x): \ x \in \mathbb T^3
  \right\}$ is all of $\mathbb T^3$. The corresponding $C^*$-algebras
  $C^*_r\left(\Gamma_T\right)$ are all the same simple and purely
  infinite $C^*$-algebra with $K$-theory groups
\begin{equation*}
\begin{split}
& K_0\left(C^*_r\left(\Gamma_{T}\right)\right) \simeq \mathbb Z
 \oplus \ker (1-\cof(A)) \oplus \mathbb Z_{\Det A -1} \oplus \coker (1
 -A), \\
&K_1\left(C^*_r\left(\Gamma_{T}\right)\right) \simeq \mathbb Z
\oplus \ker (1-A) \oplus \coker (1 -\cof(A)) .
\end{split}
\end{equation*}
The unit of $C^*_r(\Gamma_T)$ corresponds to $1 \in \mathbb Z_{\Det A-1}
\subseteq K_0\left( C^*_r(\Gamma_T)\right)$.

\item[E)] When $\Det A = 1$, $T$ is not strongly transitive unless $1$
  is the only eigenvalue of $A$ in which case the set of $\lambda$'s
  for which $T$ is strongly transitive is the dense proper subset of $\mathbb T^3$ which
  consists of the $\lambda$'s in $\mathbb T^3$ with the property that the
  closed group generated by
  $\lambda$ and the set $\left\{ x^{-1}\phi_A(x): \ x \in \mathbb T^3
  \right\}$ is all of $\mathbb T^3$. With $\lambda$ in this set, the
  $C^*$-algebra $C^*_r\left(\Gamma_T\right)$ is a unital simple
  AH-algebra with no dimension growth, a unique trace state and real
  rank zero. The ordered $K$-theory of $C^*_r\left(\Gamma_T\right)$
  depends on $\lambda$ and is calculated in Subsection \ref{homeoK} below.

\item[F)] When $\Det A = -1$, $T$ is not strongly transitive and $C^*_r\left(\Gamma_T\right)$ is not simple.

\item[G)] When $\Det A \leq -2$ the set of $\lambda$'s
  for which $T$ is strongly transitive is the dense proper subset of $\mathbb T^2$ which
  consists of $\lambda \in \mathbb T^3$ with the property that the
  closed group generated by
  $\lambda$ and the set $\left\{ x^{-1}\phi_A(x): \ x \in \mathbb T^3
  \right\}$ is all of $\mathbb T^3$. The corresponding $C^*$-algebras
  $C^*_r\left(\Gamma_T\right)$ are all the same simple and purely
  infinite $C^*$-algebra with $K$-theory groups
\begin{equation*}
\begin{split}
& K_0\left(C^*_r\left(\Gamma_{T}\right)\right) \simeq \ker
(1 +\cof(A)) \oplus \mathbb Z_{|\Det A| -1} \oplus \coker (1 +A), \\
&K_1\left(C^*_r\left(\Gamma_{T}\right)\right) \simeq  \mathbb Z_2
\oplus \coker (1 +\cof(A))
\end{split}
\end{equation*}
The unit of $C^*_r(\Gamma_T)$ corresponds to $1 \in \mathbb Z_{|\Det A|-1}
\subseteq K_0\left( C^*_r(\Gamma_T)\right)$.  
\end{itemize}

\subsection{Minimal affine homeomorphisms of the three-dimensional
  torus and their $C^*$-algebras}\label{homeoK}

In this section we justify the statements made under E) in the preceding
section concerning the $C^*$-algebras
of a minimal affine homeomorphism of $\mathbb T^3$. As we shall show
the conclusion concerning the structure of the algebras will follow
from results of Lin and Phillips from \cite{LP} once we have
calculated the K-groups and the action of the traces on $K_0$. To do
this we use the method of Phillips from Example 4.9 of \cite{Ph2}. First of all we note that a minimal affine homeomorphism of a torus is
uniquely ergodic with the Haar measure as the unique invariant Borel
probability measure. This follows from Theorem 4 of \cite{Pa} and it
means that there is only a single trace state to consider. 

Let $A \in M_3(\mathbb Z)$ be a matrix for which $1$ is the only
eigenvalue and let $\lambda = (\lambda_1,\lambda_2,\lambda_3) \in
\mathbb T^3$ be an element such that 
$Tx = \lambda \phi_A(x)$
is minimal. Since $1$ is the only eigenvalue of $A$ minimality of $T$
is equivalent to the condition that the closed group generated by $\lambda$ and the set $\left\{ x^{-1}\phi_A(x): \ x \in \mathbb T^3
  \right\}$ is all of $\mathbb T^3$. This follows from Theorem
  \ref{OK?2}, but in the present case this is actually a result of Hoare
  and Parry, cf. Theorem 4 in \cite{HP}.   

It follows from Theorem III.12 on page 50 of \cite{N} that there an element $W
\in Gl_3(\mathbb Z)$ and integers $a,b,c \in \mathbb Z$ such that
$$
WAW^{-1} = \left( \begin{smallmatrix} 1 & a & b \\ 0 & 1 & c \\ 0 & 0
    & 1 \end{smallmatrix} \right) .
$$
We will therefore assume that $A$ is equal to the matrix on the
right-hand side.

Let $\alpha_T : C(\mathbb T^3) \to C(\mathbb T^3)$ be the automorphism
$\alpha_T(f) = f \circ T$ so that $C^*_r\left(\Gamma_T\right)$ is
isomorphic to the crossed product $C(\mathbb T^3) \times_{\alpha_T}
\mathbb Z$, cf. Proposition 1.8 of \cite{Ph3}. Let $\tau$ be the trace
state of $C(\mathbb T^3) \times_{\alpha_T}
\mathbb Z$ induced by the Haar-measure of $\mathbb T^3$. Thanks to the
unique ergodicity of $T$ this is the only trace state of $C(\mathbb T^3) \times_{\alpha_T}
\mathbb Z$. We aim to calculate the map
$$
\tau_* : K_0\left(C(\mathbb T^3) \times_{\alpha_T}
\mathbb Z\right) \to \mathbb R .
$$

From the six-terms exact sequence of Pimsner and
Voiculescu, \cite{PV}, we consider the piece
\begin{equation}\label{PV}
\begin{xymatrix}{
K_0(C(\mathbb
T^3)) \ar[rr]^-{\id - {{\alpha_T}_*}^{-1}} &&
K_0(C(\mathbb
T^3)) \ar[rr]^-{i_*}  &&  K_0(C_r^*\left(\Gamma_T\right))
\ar[d]^-{\partial} \\
  && K_1\left( C(\mathbb T^3) \right)  && K_1\left( C(\mathbb T^3)
  \right) \ar[ll]^-{\id -
  {{\alpha_T}_*}^{-1}}  }
\end{xymatrix}
\end{equation}
which gives an isomorphism
\begin{equation}\label{K0iso}
K_0(C_r^*\left(\Gamma_T\right)) \simeq \coker \left(\id -
  {{\alpha_T}_*}^{-1}\right) \oplus \ker \left(\id -
  {{\alpha_T}_*}^{-1}\right) .
\end{equation}
From the calculations in Section \ref{n=3} we deduce that 
$\coker \left(\id -
  {{\alpha_T}_*}^{-1}\right) \simeq \mathbb Z \oplus \coker (1- A)$ and
\begin{equation}\label{K1sub}
\ker \left(\id -
  {{\alpha_T}_*}^{-1}\right) \simeq  \mathbb Z \oplus \ker (1-(A^t)^{-1}) .
\end{equation}
On $K_0(C(\mathbb
T^3))$ all traces induce the map which takes a projection to its rank
and it follows therefore that $\tau_* : \mathbb Z \oplus \coker (1-A)
\to \mathbb R$ is the map which picks up the first coordinate from
$\mathbb Z$ and annihilates $\coker (1-A)$.

To see what $\tau_*$ does on $\ker \left(\id -
  {{\alpha_T}_*}^{-1}\right) = \mathbb Z \oplus \ker (1-(A^t)^{-1})$
we need representatives of the 4 generators coming from the direct
summands in (\ref{K1}). The first direct summand $\mathbb Z$ is generated, as a subgroup of $K_1\left(C(\mathbb T^3)\right)$ by the image of $[z]
\otimes [z] \otimes [z] \in K_1\left(C(\mathbb T)\right) \otimes
K_1\left(C(\mathbb T)\right) \otimes K_1\left(C(\mathbb T)\right)$ under the composed map
$$
K_1\left(C(\mathbb T)\right) \otimes
K_1\left(C(\mathbb T)\right) \otimes K_1\left(C(\mathbb T)\right) \to K_1\left(C(\mathbb T)\right) \otimes
K_0\left(C(\mathbb T^2)\right) \to K_1\left(C(\mathbb T^3)\right)
$$
coming from two applications of the K\"unneth theorem. (Here $z$
denotes the identity function on $\mathbb T$, considered as a unitary in
$C(\mathbb T)$.) It follows that
a generator of this subgroup is represented by the product $UU_0$ of two unitaries $U,U_0 \in 
M_2\left(C(\mathbb T^3)\right)$ given by the formula's   
$$
U(z_1,z_2,z_3) = z_1p(z_2,z_3) + (1-p(z_2,z_3)),
$$
$$
U_0(z_1,z_2,z_3) = z_1^{-1}p_0(z_2,z_3) + (1-p_0(z_2,z_3)),
$$
where $p,p_0 \in M_2(C(\mathbb T^2))$ are the rank one projections
described on page 55 of \cite{Ex}. Since $\Det UU_0(z_1,z_2,z_3) = 1$
for all $(z_1,z_2,z_3) \in \mathbb T^3$ it follows from Theorem V. 12 and Theorem
VI. 11 in \cite{Ex} that $\tau_*$ takes this generator $u$, now considered as
an element of $K_0(C_r^*\left(\Gamma_T\right))$, to an integer $k \in
\mathbb Z$. By exchanging $u - k[1]$ for $u$ we can therefore assume that
$\tau_*$ annihilates the first $\mathbb Z$-summand from $\ker \left(\id -
  {{\alpha_T}_*}^{-1}\right)$.

To get a picture of how $\tau_*$ acts on the second summand observe that the generators of the last three summands of (\ref{K1}) are
represented in $K_1\left(C(\mathbb T^3)\right)$ by the unitaries
$u_1,u_2,u_3 \in C(\mathbb T^3)$ defined such that $u_i(z_1,z_2,z_3) =
z_i$. Note that
$$
\ker (1-(A^t)^{-1}) = \left\{ (x_1,x_2,x_3) \in \mathbb Z^3 : \ ax_1  =
  cx_2 + (b-ac)x_1 = 0 \right\} .
$$
The identification of $\ker (1-(A^t)^{-1})$ with a subgroup of
$K_1(C(\mathbb T^3))$ coming from (\ref{K1sub}) will be suppressed in the following; it is  given by the map
$(x_1,x_2,x_3) \mapsto \sum_{i=1}^3 x_i[u_i]$.
Fix a group embedding $\Phi_0 : \ker (1-(A^t)^{-1}) \to
K_0(C_r^*\left(\Gamma_T\right))$ such that $\partial \circ \Phi_0 =
\id$.
Write $\lambda_j = e^{2 \pi i \alpha_j}$ for some $\alpha_j \in
\mathbb R, j = 1,2,3$. It follows then from Theorem IX. 11 of
\cite{Ex} that for any element $\xi = (x_1,x_2,x_3) $ of
$\ker (1-(A^t)^{-1})$ there is an integer $k_{\xi}$ such
that
$$
\tau_* \circ \Phi_0(\xi) = x_1 \alpha_1 + x_2 \alpha_2 + x_3 \alpha_3
+ k_{\xi} .
$$
We can therefore change $\Phi_0$ on group generators to obtain another embedding $\Phi : \ker (1-(A^t)^{-1}) \to
K_0(C_r^*\left(\Gamma_T\right))$ such that $\Phi_0(\xi) - \Phi(\xi)
\in \mathbb Z[1]$ for all $\xi \in \ker (1-(A^t)^{-1})$ and 
$$
\tau_* \circ \Phi(x_1,x_2,x_3) = \sum_{j=1}^3 x_j\alpha_j 
$$
for all $x_1,x_2,x_3 \in \ker (1-(A^t)^{-1})$. It follows that there is an isomorphism 
$$ 
\Psi : \mathbb Z^2 \oplus \ker (1-(A^t)^{-1}) \oplus \coker (1 -A) \to
K_0(C_r^*\left(\Gamma_T\right))
$$
such that $\tau_* \circ \Psi (x,y,u,v) = x + \eta(u)$, where $x,y \in
\mathbb Z, \ u \in \ker (1-(A^t)^{-1}), \ v \in \coker (1-A)$, and 
$\eta : \ker (1-(A^t)^{-1}) \to \mathbb R$
is given by
$$
\eta(x_1,x_2,x_3) = x_1 \alpha_1 + x_2 \alpha_2 + x_3 \alpha_3 .
$$

In all cases $\alpha_3 \in
\tau_*\left(K_0(C_r^*\left(\Gamma_T\right))\right)$. Since $\lambda$
and $\left\{ x^{-1}\phi_A(x) : \ x \in \mathbb T^3 \right\}$ must
generate $\mathbb T^3$ in order for $T$ to be minimal we see that
$\alpha_3$ must be irrational. It follows therefore from Corollary 5.3
of \cite{LP} that $C^*_r(\Gamma_T)$ is a unital simple
  AH-algebra with no dimension growth, a unique trace state and real
  rank zero, as claimed in E) of Section \ref{alg-3}. 

It follows then from \cite{Ph2} that the positive semi-group of
$K_0(C_r^*\left(\Gamma_T\right))$ under the isomorphism $\Psi$ becomes
the set
$$
\{0\} \cup \left\{ (x,y,u,v) \in \mathbb Z^2 \oplus \ker
  (1-(A^t)^{-1}) \oplus \coker (1 -A): \ x + \eta(u) > 0 \right\}.
$$
In this way we have obtained a complete description of
$K_0(C_r^*\left(\Gamma_T\right))$ as a partially ordered group. In
this picture the
order unit coming from the unit in $C_r^*\left(\Gamma_T\right)$ is $(1,0,0,0)$.

The
calculation of the $K_1$-group is very easy in comparison. As in
Section \ref{n=3} we find that
$$
 K_1\left(C^*_r\left(\Gamma_T\right)\right) \simeq
 K_0\left(C^*_r\left(\Gamma_T\right)\right) \simeq \mathbb Z^2 \oplus
 \ker (1-A) \oplus \coker (1-A). 
$$

\begin{remark} When only one of the $\lambda_i$'s are different from 1
  the preceding description of the order on $K_0$ can be obtained from
  Theorem 7.2 of \cite{Rei}. On the other hand it follows from the
  calculation above that in our setting the range of the trace on
  $K_0$ can have rank 3 and 4 which is not possible when the
  $C^*$-algebra comes from a Furstenberg transformation. 
\end{remark}

\end{document}